\documentclass[12pt]{amsart}
\usepackage{amsmath,amssymb,amsfonts,amscd,amsthm}
\usepackage{tikz,pgflibraryarrows}
\usepackage{hyperref}

\setlength{\oddsidemargin}{0cm} \setlength{\evensidemargin}{0cm} \setlength{\textwidth}{16cm}

\newtheorem{theorem}{Theorem}[section]
\newtheorem{lemma}[theorem]{Lemma}
\newtheorem{proposition}[theorem]{Proposition}

\newtheorem{corollary}[theorem]{Corollary}

\newtheorem*{theorem*}{Theorem}
\theoremstyle{definition}
\newtheorem{remark}[theorem]{Remark}
\newtheorem{definition}[theorem]{Definition}
\newtheorem{example}[theorem]{Example}

\numberwithin{equation}{section}


\newcommand{\N}{\mathbb{N}}
\newcommand{\R}{\mathbb{R}}
\newcommand{\C}{\mathbb{C}}

\newcommand{\Hi}{\mathcal{H}}
\newcommand{\Bi}{\mathcal{B}}
\newcommand{\K}{\mathcal{K}}

\newcommand{\G}{\mathcal{G}}

\newcommand{\cC}{\mathcal{C}}

\newcommand{\wt}[1]{\widetilde{#1}}
\newcommand{\Pow}{\mathcal{P}}

 \DeclareMathOperator{\cspa}{\overline{span}}

\newcommand{\Ca}{$C^*$-al\-ge\-bra }

\newcommand{\shom}{$*$-ho\-mo\-mor\-phism }

\newcommand{\sinks}{\operatorname{sinks}}
\newcommand{\MU}[1]{
\setbox0\hbox{$#1$}
{#1}^+ }


\begin{document}
\title[Realizations of AF-algebras]%
{Realizations of AF-algebras as graph algebras, Exel-Laca
algebras, and ultragraph algebras}

\author{Takeshi Katsura}
\address{Takeshi Katsura, Department of Mathematics\\ Keio University\\
Yokohama, 223-8522\\ JAPAN}
\email{katsura@math.keio.ac.jp}

\author{Aidan Sims}
\address{Aidan Sims, School of Mathematics and Applied Statistics\\
University of Wollongong\\
NSW 2522\\
AUSTRALIA} \email{asims@uow.edu.au}

\author{Mark Tomforde}
\address{Mark Tomforde \\ Department of Mathematics\\ University of Houston\\
Houston \\ TX 77204-3008\\ USA} \email{tomforde@math.uh.edu}

\date{October 22, 2008; revised April 29, 2009}
\subjclass[2000]{Primary 46L55}

\keywords{graph $C^*$-algebras, Exel-Laca algebras, ultragraph
$C^*$-algebras, AF-algebras, Bratteli diagrams}

\begin{abstract}
We give various necessary and sufficient conditions for an
AF-algebra to be isomorphic to a graph $C^*$-algebra, an
Exel-Laca algebra, and an ultragraph $C^*$-algebra.  We also explore
consequences of these results.  In particular, we show that
all stable AF-algebras are both graph $C^*$-algebras and
Exel-Laca algebras, and that all simple AF-algebras are either
graph $C^*$-algebras or Exel-Laca algebras.  In addition, we
obtain a characterization of AF-algebras that are isomorphic to
the $C^*$-algebra of a row-finite graph with no sinks.
\end{abstract}

\maketitle

\tableofcontents

\section{Introduction} \label{intro-sec}
In 1980 Cuntz and Krieger introduced a class of $C^*$-algebras
constructed from finite matrices with entries in $\{0, 1 \}$
\cite{CK}.  These $C^*$-algebras, now called Cuntz-Krieger
algebras, are intimately related to the dynamics of topological
Markov chains, and appear frequently in many diverse areas of
$C^*$-algebra theory.  Cuntz-Krieger algebras have been
generalized in a number of ways, and two very natural
generalizations are the \emph{graph $C^*$-algebras} and the
\emph{Exel-Laca algebras}.

For graph $C^*$-algebras one views a $\{0, 1 \}$-matrix as an
edge adjacency matrix of a graph, and considers the
Cuntz-Krieger algebras as $C^*$-algebras of certain finite
directed graphs.  For a (not necessarily finite) directed graph
$E$, one then defines the graph $C^*$-algebra $C^*(E)$ as the
$C^*$-algebra generated by projections $p_v$ associated to the
vertices $v$ of $E$ and partial isometries $s_e$ associated to
the edges $e$ of $E$ that satisfy relations determined by the
graph.  Graph $C^*$-algebras were first studied using groupoid
methods \cite{KPR, KPRR}. Due to technical constraints, the
original theory was restricted to graphs that are
\emph{row-finite} and have \emph{no sinks}; that is, the set of
edges emitted by each vertex is finite and nonempty.  In fact
much of the early theory restricted to this case \cite{BPRS,
KPR, KPRR}, and it was not until later \cite{BHRS, DT, FLR}
that the theory was extended to infinite graphs that are not
row-finite. Interestingly, the non-row-finite setting is
significantly more complicated than the row-finite case, with
both new isomorphism classes of $C^*$-algebras and new kinds of
$C^*$-algebraic phenomena exhibited.

Another approach to generalizing the Cuntz-Krieger algebras was
taken by Exel and Laca, who defined what are now called the
Exel-Laca algebras \cite{EL}.  In this definition one allows a
possibly infinite matrix with entries in $\{0, 1 \}$ and
considers the $C^*$-algebra generated by a set of
partial isometries indexed by the rows of the matrix and
satisfying certain relations determined by the matrix.  The
construction of the Exel-Laca algebras contains the
Cuntz-Krieger construction as a special case.  Furthermore, for
\emph{row-finite matrices} (i.e., matrices in which each row
contains a finite number of nonzero entries) with nonzero rows,
the construction produces exactly the class of $C^*$-algebras
of row-finite graphs with no sinks.

Despite the fact that the classes of graph $C^*$-algebras and
Exel-Laca algebras agree in the row-finite case, they are quite
different in the non-row-finite setting.  In particular, there
are $C^*$-algebras of non-row-finite graphs that are not
isomorphic to any Exel-Laca algebra, and there are Exel-Laca
algebras of non-row-finite matrices that are not isomorphic to
the $C^*$-algebra of any graph \cite{Tom2}.  In order to bring
graph $C^*$-algebras and Exel-Laca algebras together under one
theory, Tomforde introduced the notion of an ultragraph and
described how to associate a $C^*$-algebra to such an object
\cite{Tom, Tom2}. These ultragraph $C^*$-algebras contain all
graph $C^*$-algebras and all Exel-Laca algebras, as well as
examples of $C^*$-algebras that are in neither of these two
classes.  The relationship among these classes is summarized in
Figure~\ref{fig:three-classes}.

\begin{figure}[htp!]
\[\begin{tikzpicture}[>=latex]]
  \draw[style=thick,fill=white,opacity=1] (1.5,0) circle (3.5);
  \node[anchor=north east] at (4.6,2) {\begin{tabular}{c}\textsc{Exel-Laca}\\ \textsc{Algebras}\end{tabular}};
  \draw[style=thick] (-1.5,0) circle (3.5);
  \node[anchor=north west] at (-4.95,2) {\begin{tabular}{c}\textsc{Graph}\\ \textsc{$C^*$-algebras}\end{tabular}};
  \draw[style=thick, smooth cycle] plot[tension=0.7] coordinates{(-4.5,-3.0) (-4.5,3.5) (4.5,3.5) (4.5,-3.0)};
  \node at (0,3.85) {\textsc{Ultragraph $C^*$-algebras}};
  \draw[style=semithick, smooth cycle] plot[tension=0.7] coordinates{(-1,-2) (-1,2) (1,2) (1,-2)};
          \node[anchor=north west, inner sep=3pt] at (-1.35,1.4) {\begin{tabular}{c} \text{$C^*$-algebras} \\ \text{of row-finite} \\ \text{graphs}  \\ \text{with no} \\ \text{sinks} \end{tabular}};
\end{tikzpicture}\]
\caption{The relationship among graph $C^*$-algebras, Exel-Laca algebras, and ultragraph $C^*$-algebras}\label{fig:three-classes}
\end{figure}
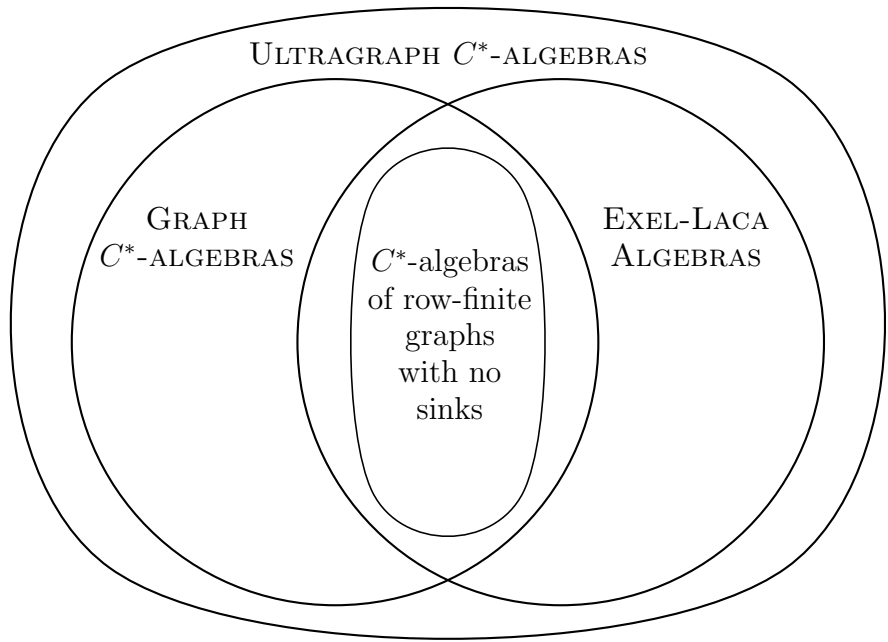

Given the relationship among these classes of $C^*$-algebras,
it is natural to ask the following question.

\vskip1ex

\noindent \textbf{Question: }``How different are the
$C^*$-algebras in the three classes of graph $C^*$-algebras,
Exel-Laca algebras, and ultragraph $C^*$-algebras?"

\vskip1ex

\noindent There are various ways to approach this question, and
one such approach was taken in \cite{KMST2}, where it was shown
that the classes of graph $C^*$-algebras, Exel-Laca algebras,
and ultragraph $C^*$-algebras agree up to Morita equivalence.
More specifically, given a $C^*$-algebra $A$ in any of these
three classes, one can always find a row-finite graph $E$ with
no sinks such that $C^*(E)$ is Morita equivalent to $A$.

Thus the three classes cannot be distinguished by Morita
equivalence classes of $C^*$-algebras. The natural next
question is to what extent they can be distinguished by isomorphism
classes of $C^*$-algebras. A starting point for these
investigations is to ask about AF-algebras.

While no Cuntz-Krieger algebra is an AF-algebra, the classes of graph $C^*$-algebras and Exel-Laca algebras each include many AF-algebras.  In fact, one of the early results
in the theory of graph $C^*$-algebras shows that if $A$ is any
AF-algebra, then there is a row-finite graph $E$ with no sinks
such that $C^*(E)$ is Morita equivalent to $A$
\cite{Drinen2000}.
From this fact and the result in \cite{KMST2} mentioned above,
our three classes (graph $C^*$-algebras, Exel-Laca algebras, and ultragraph
$C^*$-algebras) each contain all AF-algebras up to Morita
equivalence.

The purpose of this paper is to examine the three classes of
graph $C^*$-algebras, Exel-Laca algebras, and ultragraph
$C^*$-algebras and determine which AF-algebras are contained,
up to isomorphism, in each class.  This turns out to be a
difficult task, and we are unable to give a complete solution
to the problem.  Nonetheless, we are able to give a number of
sufficient conditions and a number of necessary conditions for
a given AF-algebra to belong to each of these three classes
(see
\S\ref{sufficient-cond-subsec}~and~\S\ref{necessary-cond-subsec}).
As special cases of our sufficient conditions, we obtain the
following.
\begin{itemize}
\item If $A$ is a stable AF-algebra, then $A$ is isomorphic
  to the $C^*$-algebra of a row-finite graph with no
  sinks.
\item If $A$ is a simple AF-algebra, then $A$ is isomorphic
  to either an Exel-Laca algebra or a graph $C^*$
  -algebra. In particular, if $A$ is finite dimensional,
  then $A$ is isomorphic to a graph $C^*$-algebra; and if
  $A$ infinite dimensional, then $A$ is isomorphic to an
  Exel-Laca algebra.
\item If $A$ is an AF-algebra with no nonzero
  finite-dimensional quotients, then $A$ is isomorphic to
  an Exel-Laca algebra.
\end{itemize}

\noindent From our necessary conditions, we obtain the
following.
\begin{itemize}
\item  If an ultragraph $C^*$-algebra is a commutative
  AF-algebra then it is isomorphic to $c_0(X)$ for an at
  most countable discrete set $X$.
\item  No finite-dimensional $C^*$-algebra is isomorphic to an
  Exel-Laca algebra.
\item No infinite-dimensional UHF algebra is isomorphic to
  a graph $C^*$-algebra.
\end{itemize}

\noindent Moreover, we are able to give a characterization of
AF-algebras that are isomorphic to $C^*$-algebras of row-finite
graphs with no sinks in
Theorem~\ref{no-unital-quotient-then-graph-alg}.

\begin{theorem*}
Let $A$ be an AF-algebra. Then the following are equivalent:
\begin{enumerate}
\item $A$ has no unital quotients.
\item $A$ is isomorphic to the $C^*$-algebra of a
  row-finite graph with no sinks.
\end{enumerate}
\end{theorem*}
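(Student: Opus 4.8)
The plan is to treat the two implications separately; $(2)\Rightarrow(1)$ is routine, while $(1)\Rightarrow(2)$ is the substantial part.

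For $(2)\Rightarrow(1)$, suppose $A\cong C^*(E)$ with $E$ row-finite and having no sinks. Since $A$ is AF, $C^*(E)$ is AF, so $E$ has no cycles (the standard characterization, cf.\ \cite{KPR}). A graph with no cycles satisfies Condition~(K), so every ideal of $C^*(E)$ is gauge-invariant; as $E$ is row-finite these are exactly the ideals $I_H$ for $H\subseteq E^0$ saturated hereditary, and $C^*(E)/I_H\cong C^*(E\setminus H)$, where $E\setminus H$ has vertex set $E^0\setminus H$ and edge set $\{e\in E^1:r(e)\notin H\}$ \cite{BPRS}. I would check that $E\setminus H$ is again row-finite with no sinks; the only point is the absence of sinks, which follows from saturation (if $v\notin H$ emitted all of its edges into $H$, then---$v$ not being a sink of $E$---saturation forces $v\in H$). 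Since $C^*(F)$ is unital exactly when $F^0$ is finite, and since a finite graph with no sinks must contain a cycle while $E\setminus H$ inherits acyclicity from $E$, no proper $I_H$ yields a unital quotient; the only remaining quotient is $0$. Hence $A$ has no nonzero unital quotient, which is $(1)$.

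For $(1)\Rightarrow(2)$ I would work from a Bratteli diagram. Write $A=\varinjlim(A_n,\phi_n)$ with each $A_n$ finite-dimensional, and, after telescoping, with every $\phi_n$ injective; then every vertex of the associated Bratteli diagram $B$ emits at least one edge (an injectively embedded summand has nonzero image, hence a nonzero component in some summand at the next level). Regard $B$ itself as a directed graph $E$, with edges pointing from level $n$ to level $n+1$. Then $E$ is row-finite (finitely many summands at each level), has no sinks (injectivity of the $\phi_n$), and is acyclic (it is layered), so $C^*(E)$ is AF. Using the description $K_0(C^*(E))=\coker(1-A_E^{t})$ for the vertex matrix $A_E$, I would identify $K_0(C^*(E))$, with its order, with the dimension group $\varinjlim(\bigoplus_{v\in V_n}\Z,\,M_n)$ built from the multiplicity matrices $M_n$ of $B$, i.e.\ with the ordered group $K_0(A)$; thus $C^*(E)$ and $A$ are stably isomorphic. (This recovers, in sharpened form, Drinen's theorem \cite{Drinen2000} that an AF-algebra is Morita equivalent to a graph algebra built from its Bratteli diagram.)

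The hard part will be promoting this to an honest isomorphism $C^*(E)\cong A$. By Elliott's classification of AF-algebras, since the dimension groups already agree it suffices to match the scales, $\Sigma(C^*(E))=\Sigma(A)$, for a suitably chosen Bratteli diagram $B$ of $A$; this is exactly where the hypothesis that $A$ has no unital quotient must be used. Without it the construction really does fail: for $A=\C$ the only Bratteli diagram with injective connecting maps is the infinite line, whose graph algebra is $\K$, and correspondingly $\C$ does have a unital quotient. The plan is to compute $\Sigma(C^*(E))$ in terms of the vertex-projection classes $[p_v]$ and the forward relations $[p_v]=\sum_{e\in vE^1}[p_{r(e)}]$, and then to show it coincides with $\Sigma(A)$ using that for every proper ideal $I$ the connecting maps of the Bratteli diagram of $A/I$ are not eventually unital (equivalently, $A/I$ is non-unital). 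I expect this scale comparison, together with making the right choice of $B$, to be the main obstacle; the remaining bookkeeping with Bratteli diagrams can presumably be organized via the sufficient conditions of \S\ref{sufficient-cond-subsec}.
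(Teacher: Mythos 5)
Your argument for $(2)\Rightarrow(1)$ is correct and is essentially the proof in the paper: acyclicity gives Condition~(K), so every proper quotient is $C^*(E_H)$ for a saturated hereditary $H$, and $E_H$ is a nonempty acyclic graph with no sinks, hence has infinitely many vertices and a nonunital $C^*$-algebra.

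The direction $(1)\Rightarrow(2)$, however, contains a genuine gap, and it sits exactly at the step you defer. Your plan is to take a Bratteli diagram $B$ for $A$, regard it as a graph $E$, observe that $K_0(C^*(E))\cong K_0(A)$ as ordered groups, and then ``match the scales'' via Elliott's theorem. But for the graph $E=B$ the scale essentially never matches and cannot be repaired by telescoping or by re-choosing $B$: in $C^*(E)$ the relation $p_v=\sum_{e\in vE^1}s_es_e^*$ forces $[p_v]=\sum_{e\in vE^1}[p_{r(e)}]$, so the natural AF filtration of $C^*(E)$ has summand sizes equal to the number of paths from the first level into $v$, not the prescribed dimensions $d_v$. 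Concretely, for $A=M_{2^\infty}\oplus\K$ (which satisfies $(1)$... no, take instead any $A$ with no unital quotients) the diagram-as-graph construction produces an algebra only Morita equivalent to $A$; already for the $2^\infty$ diagram it produces $C^*(F_2)\not\cong M_{2^\infty}$. So the entire content of the implication is the construction of a \emph{different} graph whose algebra has the right scale, and your proposal offers no mechanism for this --- it only names the obstacle. Moreover, an abstract Elliott-based route would require knowing which scaled dimension groups are realized by row-finite graphs with no sinks, which (as the introduction of this paper points out) is not available. The paper's proof instead uses Lemma~\ref{unital-quotient-char-lem} to produce a Bratteli diagram with $d_v\geq 2$ and $d_v>\sum_{e\in E^1v}d_{s(e)}$ (this is where the no-unital-quotient hypothesis enters), then runs the construction of \S\ref{ultra-contruct-subsec}, replacing each Bratteli vertex $v$ by a chain of $\Delta_v-1$ vertices; Lemma~\ref{Evil-Lemma-:-(} and Theorem~\ref{AF-as-ultra-thm} show the resulting ultragraph algebra is literally isomorphic to $A$ (correct scale and all), and the strict inequality $d_v>\sum_{e\in E^1v}d_{s(e)}$ forces every ultraedge to have finite range, so the ultragraph is in fact a row-finite graph with no sinks. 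Without an argument of this kind --- or a genuinely worked-out scale computation --- your proof of $(1)\Rightarrow(2)$ is a program rather than a proof.
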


Our results allow us to make a fairly detailed analysis of the
AF-algebras in each of our three classes, and in
Figure~\ref{fig:Venn} at the end of this paper we draw a Venn
diagram relating various classes of AF-algebras among the graph
$C^*$-algebras, Exel-Laca algebras, and ultragraph
$C^*$-algebras.  Our results are powerful enough that we are
able to give examples in each region of the Venn diagram, and
also state definitively whether or not there are unital and
nonunital examples in each region.

Finally, we remark that a particularly useful aspect of our
sufficiency results is their constructive nature.  When one
first approaches the problem of identifying which AF-algebra
are in our three classes, one may be tempted to use the
$K$-theory classification of AF-algebras.  There are, however,
two problems with this approach: (1) Since any AF-algebra is
Morita equivalent to the $C^*$-algebra of a row-finite graph
with no sinks, we know that all ordered $K_0$-groups are
attained by the AF-algebras in each of our three classes.  Thus
we need to identify which \emph{scaled} ordered $K_0$-groups
are attained by the AF-algebras in each class. Unfortunately,
however, little is currently known about the scale for the
$K_0$-groups of $C^*$-algebras in these three classes. (2) More
importantly, even if we could decide exactly which scaled
ordered $K_0$-groups are attained by, for example, graph
AF-algebras, we would obtain at best an abstract
characterization of which AF-algebras are graph $C^*$-algebras.
Unless our understanding of the scaled ordered $K_0$-groups
achieved by AF graph $C^*$-algebras extended to an algorithm
for producing a graph whose $C^*$-algebra achieved a given
scaled ordered $K_0$-group, we would be unable to take a given
AF-algebra $A$ and view it as a graph $C^*$-algebra. Most
notably, we could not expect to ``see" the canonical generators
of $C^*(E)$ in $A$.

With an awareness of the limitations of an abstract
characterization, we instead present constructive methods for
realizing AF-algebras as $C^*$-algebras in our three classes.
Given a certain type of AF-algebra $A$ we show how to build an
ultragraph $\G$ from a certain type of Bratteli diagram for $A$
so that $C^*(\G)$ is isomorphic to $A$ (see
\S\ref{ultra-contruct-subsec}).  This ultragraph $C^*$-algebra is always an
Exel-Laca algebra, and in special situations (see
\S\ref{sufficient-cond-subsec}) it is also a graph
$C^*$-algebra.  Furthermore, one can extract from $\G$ a
$\{0,1\}$-matrix for the Exel-Laca algebra or a directed graph
for the graph $C^*$-algebra as appropriate.

This paper is organized as follows.  In \S\ref{prelim-sec} we
establish definitions and notation for graph $C^*$-algebras,
Exel-Laca algebras, ultragraph $C^*$-algebras, and AF-algebras.
In \S\ref{lemmas-sec} we establish some technical lemmas
regarding Bratteli diagrams and inclusions of
finite-dimensional $C^*$-algebras.  In \S\ref{results-sec} we
state the main results of this paper.  Specifically, in
\S\ref{ultra-contruct-subsec} we describe how to take a
Bratteli diagram for an AF-algebra $A$ with no nonzero
finite-dimensional quotients and build an ultragraph $\G$.  In
\S\ref{sufficient-cond-subsec} we prove that the associated
ultragraph $C^*$-algebra $C^*(\G)$ is isomorphic to $A$.  We
also show that $C^*(\G)$ is always isomorphic to an Exel-Laca
algebra, and describe conditions which imply  $C^*(\G)$ is also
a graph $C^*$-algebra.  These results give us a number of
sufficient conditions for AF-algebras to be contained in our
three classes of graph $C^*$-algebras, Exel-Laca algebras, and
ultragraph $C^*$-algebras.  We also present examples showing
that none of our sufficient conditions are necessary.  In
\S\ref{necessary-cond-subsec} we give several necessary
conditions for AF-algebras to be in each of our three classes.
These conditions allow us to identify a number of obstructions
to realizations of various AF-algebras in each class.  We
conclude in \S\ref{Venn-sec} by summarizing our containments.
First, we characterize precisely which simple AF-algebras fall
into each of our classes.  Second, we summarize many of the
relationships we have derived, including containments for the
finite-dimensional and stable AF-algebras, and draw a Venn
diagram to represent these containments.  We are able to use
our results from \S\ref{results-sec}  to exhibit examples in
each region of the Venn diagram, thereby showing these regions
are nonempty.  We are also able to describe precisely when
unital and nonunital examples occur in these regions.

\section{Preliminaries} \label{prelim-sec}

In the following four subsections we establish definitions and
notation for graph $C^*$-algebras, Exel-Laca algebras,
ultragraph $C^*$-algebras, and AF-algebras.  Since the
literature for each of these classes of $C^*$-algebras is large
and well developed, we present only the definitions and
notation required in this paper.  However, for each class we
provide introductory references where more detailed information
may be found.

\subsection{Graph $C^*$-algebras}  Introductory references include \cite{BPRS, Rae, Tom9}.

\begin{definition}
A \emph{graph} $E=(E^{0},E^{1},r,s)$ consists of a countable
set $E^{0}$ of vertices, a countable set $E^{1}$ of edges, and
maps $r \colon E^{1} \to E^{0}$ and $s \colon E^1 \to E^0$ identifying the
range and source of each edge.
\end{definition}

A \emph{path} in a graph $E = (E^0, E^1, r, s)$ is a sequence
of edges $\alpha := e_1 \ldots e_n$ with $s(e_{i+1}) = r(e_i)$
for $1 \leq i \leq n-1$. We say that $\alpha$ has \emph{length}
$n$.  We regard vertices as paths of length 0 and edges as
paths of length 1, and we then extend our notation for the
vertex set and the edge set by writing $E^n$ for the set of
paths of length $n$ for all $n \ge 0$. We write $E^*$ for the
set $\bigsqcup_{n=0}^\infty E^n$ of paths of finite length, and
extend the maps $r$ and $s$ to $E^*$ by setting $r(v) = s(v) =
v$ for $v \in E^0$, and $r(\alpha_1 \ldots \alpha_n) =
r(\alpha_n)$ and $s(\alpha_1\ldots\alpha_n) = s(\alpha_1)$.

If $\alpha$ and $\beta$ are elements of $E^*$ such that
$r(\alpha) = s(\beta)$, then $\alpha\beta$ is the path of
length $|\alpha|+|\beta|$ obtained by concatenating the two.
Given $\alpha, \beta \in E^*$, and a subset $X$ of $E^*$, we
let
\[
\alpha X \beta := \{ \gamma \in E^* : \gamma = \alpha \gamma' \beta \text{ for some } \gamma' \in X \}.
\]
So when $v$ and $w$ are vertices, we have
\begin{align*}
vX &= \{\gamma \in X : s(\gamma) = v\},\\
Xw &= \{\gamma \in X : r(\gamma) = w\},\text{ and}\\
vXw &= \{ \gamma \in X : s(\gamma) = v \text{ and } r(\gamma) = w \}.
\end{align*}
In particular, $vE^1w$ denotes the set of edges from $v$ to $w$
and $|vE^1w|$ denotes the number of edges from $v$ to $w$.

We say a vertex $v$ is a \emph{sink} if $vE^1 = \emptyset$ and
an \emph{infinite emitter} if $vE^1$ is infinite. A graph is
called \emph{row-finite} if it has no infinite emitters.

\begin{definition}[Graph $C^*$-algebras]
If $E=(E^{0},E^{1},r,s)$ is a graph, then the \emph{graph
$C^*$-algebra} $C^{*}(E)$ is the universal $C^*$-algebra
generated by mutually orthogonal projections $\{p_{v} : v \in
E^{0} \}$ and partial isometries $\{ s_{e} : e \in E^{1} \}$
with mutually orthogonal ranges satisfying
\begin{enumerate}
\item $s_{e}^{*}s_{e} = p_{r(e)}$ \ \ for all $e \in E^{1}$
\item $p_{v}=\displaystyle  {\ \sum_{e \in vE^1} s_{e}
   s_{e}^*}$ \ \ for all $v\in E^{0}$ such that
   $0<|vE^1|<\infty$
\item $s_{e}s_{e}^{*} \leq p_{s(e)}$ \ \ for all $e \in
   E^{1}$.
\end{enumerate}
\end{definition}

We write $v \geq w$ to mean that there is a path $\alpha \in
E^*$ such that $s(\alpha) = v$ and $r(\alpha) = w$.
A \emph{cycle} in a graph $E$ is a path $\alpha \in E^*$ of
nonzero length with $r(\alpha) = s(\alpha)$.
\cite[Theorem~2.4]{KPR} says that $C^*(E)$ is an AF-algebra
if and only if $E$ has no cycles.

\subsection{Exel-Laca algebras} Introductory references include \cite{EL, EL2, FLR, Szy4}.

\begin{definition}[Exel-Laca algebras]  Let $I$ be a finite or countably infinite set, and let $A =
\{A(i,j)\}_{i,j \in I}$ be a $\{0,1\}$-matrix over $I$ with no
identically zero rows.  The \emph{Exel-Laca algebra}
$\mathcal{O}_A$ is the universal $C^*$-algebra generated by
partial isometries $\{ s_i : i \in I \}$ with commuting initial
projections and mutually orthogonal range projections
satisfying $s_i^* s_i s_j s_j^* = A(i,j) s_js_j^*$ and
\begin{equation}
\label{conditionfour}
\prod_{x \in X} s_x^*s_x \prod_{y \in Y} (1-s_y^*s_y) =
\sum_{j \in I} A(X,Y,j) s_js_j^*
\end{equation}
whenever $X$ and $Y$ are finite subsets of $I$ such that $X
\not= \emptyset$ and the function $$j \in I \mapsto A(X,Y,j) :=
\prod_{x \in X} A(x,j) \prod_{y \in Y} (1-A(y,j))$$ is finitely
supported.  (We interpret the unit in (\ref{conditionfour}) as
the unit in the multiplier algebra of $\mathcal{O}_A$.)
\end{definition}

We will see in Remark~\ref{rem:ELalgis,,,}
that for a $\{0,1\}$-matrix $A$ with no identically zero rows,
the canonical ultragraph $\G_A$ of $A$
satisfies $C^*(\G_A) \cong \mathcal{O}_A$.
With this notation, \cite[Theorem~4.1]{Tom2} implies
that the Exel-Laca algebra $\mathcal{O}_A$ is an AF-algebra
if and only if $\G_A$ has no cycle.
The latter condition can be restated as:
there does not exist a finite set $\{i_1, \ldots, i_n\} \subseteq I$
with $A(i_k, i_{k+1}) = 1$ for all $1 \leq k \leq n-1$ and
$A(i_n, i_1) =1$.

It is well known that the class of graph $C^*$-algebras of
row-finite graphs with no sinks and the class of Exel-Laca
algebras of row-finite matrices coincide.  However, we have
been unable to find a reference, so we give a proof here.

\begin{lemma} \label{row-finite-graphs-matrices-lem}
The class of graph $C^*$-algebras of row-finite graphs with no
sinks and the class of Exel-Laca algebras of row-finite
matrices coincide.  In particular,
\begin{enumerate}
\item If $E = (E^0, E^1, r, s)$ is a row-finite graph with
  no sinks, and if we define a $\{0,1\}$-matrix $A_E$ over $E^1$
  by $$A_E(e,f) := \begin{cases} 1 & \text{ if r(e)
  = s(f)} \\ 0 & \text{ otherwise,}\end{cases}$$ then
  $A_E$ is a row-finite matrix with no identically zero rows
  and $C^*(E) \cong \mathcal{O}_{A_E}$.
\item If $A$ is a row-finite $\{0,1\}$-matrix over $I$ with no
  identically zero rows, and if we define a graph $E_A$ by
  setting $E_A^0 := I$ and drawing an edge from $v \in I$
  to $w \in I$ if and only if $A(v,w) = 1$, then
  $E_A$ is a row-finite graph with no sinks
  and $\mathcal{O}_A \cong C^*(E_A)$.
\end{enumerate}
\end{lemma}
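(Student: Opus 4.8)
The plan is to prove both directions by exhibiting explicit isomorphisms, in each case by checking that the canonical generators on one side satisfy the defining relations of the universal algebra on the other side, and then invoking a gauge-type uniqueness theorem to conclude that the induced surjection is injective.

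For part (1), I would start with a row-finite graph $E$ with no sinks. The condition ``no sinks'' immediately guarantees that $A_E$ has no identically zero rows, since for each edge $e$ there is at least one edge $f$ with $s(f) = r(e)$; row-finiteness of $E$ gives row-finiteness of $A_E$. Now let $\{p_v, s_e\}$ be the canonical generators of $C^*(E)$. The idea is to set $t_e := s_e$ for each $e \in E^1$ and verify the Exel-Laca relations. The initial projection of $t_e$ is $t_e^* t_e = s_e^* s_e = p_{r(e)}$, so the initial projections commute (they are among the mutually orthogonal $p_v$'s — actually they commute since distinct $p_v$'s are orthogonal and equal ones coincide), the range projections $s_e s_e^*$ are mutually orthogonal by hypothesis, and the relation $t_e^* t_e t_f t_f^* = A_E(e,f) t_f t_f^*$ reduces to $p_{r(e)} s_f s_f^* = A_E(e,f) s_f s_f^*$, which holds because $s_f s_f^* \le p_{s(f)}$ and the $p_v$ are orthogonal: if $r(e) = s(f)$ then $p_{r(e)} s_f s_f^* = s_f s_f^*$ and $A_E(e,f) = 1$, while if $r(e) \ne s(f)$ both sides vanish. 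For the Cuntz-Krieger-type relation \eqref{conditionfour}, since $A_E$ is row-finite the function $j \mapsto A_E(X,Y,j)$ is always finitely supported, and one computes that $\prod_{x \in X} s_x^* s_x \prod_{y \in Y}(1 - s_y^* s_y)$ equals a sum of the projections $p_v$ over an appropriate finite set of vertices $v$; using relation (2) of the graph algebra (valid because $E$ is row-finite with no sinks, so $0 < |vE^1| < \infty$ for all $v$) each such $p_v = \sum_{f \in vE^1} s_f s_f^*$, and collecting terms gives exactly $\sum_j A_E(X,Y,j) s_j s_j^*$. This produces a surjection $\mathcal{O}_{A_E} \to C^*(E)$ by universality of $\mathcal{O}_{A_E}$; a similar computation in the reverse direction (setting $p_v := \sum_{e \in vE^1} s_e^* s_e$ in $\mathcal{O}_{A_E}$, noting this is a genuine projection by row-finiteness) yields an inverse, so the map is an isomorphism.

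For part (2), the argument is essentially symmetric. Given a row-finite $\{0,1\}$-matrix $A$ over $I$ with no zero rows, the graph $E_A$ has vertex set $I$ and a single edge from $v$ to $w$ whenever $A(v,w) = 1$; row-finiteness of $A$ means each vertex emits finitely many edges, and no zero rows means no vertex is a sink. Writing the edge from $v$ to $w$ (when it exists) as $e_{v,w}$, I would set $s_{e_{v,w}} := s_v s_w^* s_w$ (or a similar product built from the Exel-Laca partial isometries) and $p_v := s_v^* s_v$, then check the graph $C^*$-algebra relations. Relation (1) and (3) follow from the Exel-Laca relations $s_i^* s_i s_j s_j^* = A(i,j) s_j s_j^*$ and the commutativity of initial projections, and relation (2) for $p_v$ with $0 < |vE_A^1| < \infty$ — which is automatic here — is precisely the finitely-supported case of \eqref{conditionfour} with $X = \{v\}$ and $Y = \emptyset$. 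Again universality gives a surjection in one direction, an explicit inverse in the other, hence an isomorphism.

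I expect the main technical obstacle to be the bookkeeping in verifying relation \eqref{conditionfour} in part (1) (and its counterpart, relation (2), in part (2)): one must correctly identify $\prod_{x \in X} s_x^* s_x \prod_{y \in Y}(1 - s_y^* s_y)$ as a sum over the set of vertices $v$ such that $v = r(x)$ for all $x \in X$ and $v \ne r(y)$ for all $y \in Y$, and then see that after applying the graph relation to each $p_v$ the resulting edges $f$ are exactly those with $A_E(x, f) = 1$ for all $x$ and $A_E(y,f) = 0$ for all $y$, i.e. those with $A_E(X, Y, f) = 1$. This is entirely routine but requires care with the indexing. The appeal to a uniqueness theorem to upgrade the surjections to isomorphisms can be sidestepped entirely by the ``explicit inverse'' approach, which is cleaner; alternatively one cites the gauge-invariant uniqueness theorems for graph and Exel-Laca algebras. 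I would favor the explicit-inverse route since it keeps the proof self-contained.
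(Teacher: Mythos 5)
Your overall strategy is sound and, for part (1), is essentially the paper's: one exhibits a family in one algebra satisfying the universal relations of the other and then upgrades to an isomorphism (the paper builds only the map $C^*(E)\to\mathcal{O}_{A_E}$ and invokes the Gauge-Invariant Uniqueness Theorem, whereas you also build the reverse map; either works). For part (2) you diverge from the paper, which passes through the canonical ultragraph $\G_A$ and cites a result allowing each finite-range ultraedge to be exploded into ordinary edges; your direct generators-and-relations argument is a legitimate, more self-contained alternative. Your identification of the left-hand side of \eqref{conditionfour} as $p_v$ for the unique vertex $v$ with $v=r(x)$ for all $x\in X$ and $v\neq r(y)$ for all $y\in Y$ (and $0$ if no such $v$ exists) is exactly the right bookkeeping.

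However, two of your explicit formulas are wrong as written and would make the verifications fail. First, in the reverse direction of part (1) you set $p_v:=\sum_{e\in vE^1}s_e^*s_e$ and assert this is a projection ``by row-finiteness.'' Row-finiteness only makes the sum finite; the \emph{initial} projections of distinct edges with the same source are not mutually orthogonal (two parallel edges $e,f$ from $v$ to $w$ have $s_e^*s_e=s_f^*s_f$, so the sum contains $2s_e^*s_e$), so this sum is generally not a projection. The correct assignment is $P_v:=\sum_{e\in vE^1}s_es_e^*$, using the mutually orthogonal \emph{range} projections, which is what the paper does. Second, in part (2) the assignments $s_{e_{v,w}}:=s_vs_w^*s_w$ and $p_v:=s_v^*s_v$ do not satisfy the Cuntz--Krieger relations: the range projections $s_vs_w^*s_ws_v^*$ of edges emitted by $v$ sum to something dominated by $s_vs_v^*$, not by your $p_v=s_v^*s_v$, and $(s_vs_w^*s_w)^*(s_vs_w^*s_w)=s_v^*s_vs_w^*s_w\neq s_w^*s_w$ in general. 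The correct choices are $p_v:=s_vs_v^*$ and $s_{e_{v,w}}:=s_vs_ws_w^*$; then $s_{e_{v,w}}^*s_{e_{v,w}}=s_ws_w^*=p_w$ by the relation $s_v^*s_vs_ws_w^*=A(v,w)s_ws_w^*$, and $\sum_{w}s_{e_{v,w}}s_{e_{v,w}}^*=s_v(s_v^*s_v)s_v^*=p_v$ using \eqref{conditionfour} with $X=\{v\}$, $Y=\emptyset$. With these corrections (and the observation that $\sum_{A(v,w)=1}s_vs_ws_w^*=s_v$, which gives surjectivity) your argument goes through.
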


\begin{proof}
For (1) let $E = (E^0, E^1, r, s)$ be a row-finite graph with
no sinks, and define the matrix $A_E$ as above.
Since $E$ is row-finite, $A_E$ is also row-finite.
Let $\{ S_e : e \in E^1 \}$ be a generating Exel-Laca
$A_E$-family in $\mathcal{O}_{A_E}$.  For $v \in E^0$ we define
$P_v := \sum_{s(e) = v} S_eS_e^*$ in $\mathcal{O}_{A_E}$.
(Note that this sum is always finite since $A_E$ is row-finite.)
We now show that $\{S_e, P_v : e \in E^1, v \in E^0 \}$
is a Cuntz-Krieger $E$-family in $\mathcal{O}_{A_E}$.
The $S_e$'s have mutually orthogonal
range projections by the Exel-Laca relations, and hence the
$P_v$'s are also mutually orthogonal projections.  In addition,
Condition~(2) and Condition~(3) in the definition of graph
$C^*$-algebras obviously hold from our definition of $P_v$.  It
remains to show Condition~(1) holds.  If $e \in E^1$, let $X :=
\{ e \}$ and $Y := \emptyset$.  Then for $j \in E^1$, we have
$A_E(X,Y,j) := 1$ if and only if $s(j) = r(e)$.  Since $E$ is
row-finite, the function $j \mapsto A_E(X,Y,j)$ is finitely
supported, and (\ref{conditionfour}) gives $S_e^*S_e = \sum_{j
\in E^1} A(X,Y,j) S_jS_j^* = \sum_{ s(j) = r(e) } S_jS_j^* =
P_{r(e)}$, so Condition~(1) holds.  Thus $\{S_e, P_v : e \in
E^1, v \in E^0 \}$ is a Cuntz-Krieger $E$-family, and by the
universal property of $C^*(E)$ we obtain a \shom
$\phi \colon C^*(E) \to \mathcal{O}_{A_E}$ with $\phi(s_e) = S_e$
and $\phi(p_v) = P_v$ where $\{s_e, p_v \}$ is a generating
Cuntz-Krieger $E$-family for $C^*(E)$.
By checking on generators, one can see
that $\phi$ is equivariant with respect to the gauge actions on
$C^*(E)$ and $\mathcal{O}_{A_E}$, and thus the Gauge-Invariant
Uniqueness Theorem \cite[Theorem~2.1]{BPRS} implies that $\phi$
is injective.  Since the image of $\phi$ contains the
generators $\{ S_e : e \in E^1 \}$ of $\mathcal{O}_{A_E}$,
$\phi$ is also surjective.  Thus $C^*(E) \cong
\mathcal{O}_{A_E}$.

For (2) let $A$ be a row-finite $\{0,1\}$-matrix with no
identically zero rows.
Let $\G_A$ be the canonical ultragraph of $A$
(see Remark~\ref{rem:ELalgis,,,}).
Then the source map of $\G_A$ is bijective and
$C^*(\G_A) \cong \mathcal{O}_A$.
Since $A$ is a row-finite matrix,
the range of each edge in $\G_A$ is a finite set.
Thus $C^*(\G_A)$ is isomorphic to the $C^*$-algebra of the graph
formed by replacing each edge in $\G_A$ with a set of edges
from $s(e)$ to $w$ for all $w \in r(e)$ \cite[Remark~2.5]{KMST2}.
But this is precisely the graph $E_A$ described in the statement above.
\end{proof}

\subsection{Ultragraph $C^*$-algebras}
Introductory references include \cite{KMST, KMST2, Tom, Tom2}.
For a set $X$, let $\Pow(X)$ denote the collection of all subsets of $X$.

\begin{definition}(\cite[Definition~2.1]{Tom})
An \emph{ultragraph} $\G = (G^0, \G^1, r,s)$ consists of a
countable set of vertices $G^0$, a countable set of ultraedges
$\G^1$, and functions $s\colon \G^1\rightarrow G^0$ and
$r\colon \G^1 \rightarrow \Pow(G^0)\setminus\{\emptyset\}$.
\end{definition}

Note that in the literature, ultraedges are typically just
referred to as edges. However, since we will frequently be
passing back and forth between graphs and ultragraphs in this
paper, we feel that using the term ultraedge will serve as a
helpful reminder that edges in ultragraphs behave differently
than in graphs.

\begin{definition}
For a set $X$, a subset $\cC\/$ of $\Pow(X)$ is called an
{\em algebra} if
\begin{enumerate}
\item[(i)] $\emptyset\in\cC$,
\item[(ii)] $A\cap B\in\cC$ and $A\cup B\in\cC$ for all
  $A,B\in\cC$, and
\item[(iii)] $A\setminus B\in \cC$ for all $A,B\in\cC$.
\end{enumerate}
\end{definition}

\begin{definition}
For an ultragraph $\G = (G^0, \G^1, r,s)$, we let $\G^0$ denote
the smallest algebra in $\Pow(G^0)$ containing the singleton
sets and the sets  $\{r(e) : e \in \G^1\}$.
\end{definition}

\begin{definition}
A \emph{representation} of an algebra $\cC$ is
a collection of projections $\{p_A\}_{A\in \cC}$ in a \Ca
satisfying $p_\emptyset = 0$, $p_A p_B = p_{A \cap B}$, and
$p_{A\cup B} = p_A + p_B - p_{A \cap B}$ for all $A,B \in \cC$.
\end{definition}

Observe that a representation of an algebra automatically
satisfies $p_{A \setminus B} = p_A - p_A p_B$.

\begin{definition} \label{dfn:CK-G-fam}
For an ultragraph $\G = (G^0, \G^1, r,s)$, the \emph{ultragraph
$C^*$-algebra} $C^*(\G)$ is the universal $C^*$-algebra
generated by a representation $\{p_A\}_{A\in \G^0}$ of $\G^0$
and a collection of partial isometries $\{s_e\}_{e \in \G^1}$
with mutually orthogonal ranges that satisfy
\begin{enumerate}
\item $s_e^*s_e = p_{r(e)}$ for all $e \in \G^1$,
\item $s_es_e^* \leq p_{s(e)}$ for all $e \in \G^1$,
\item $p_v = \sum_{e \in v\G^1} s_es_e^*$ whenever $0 <
   |v\G^1| < \infty$,
\end{enumerate}
where we write $p_v$ in place of $p_{ \{ v \} }$ for
$v \in G^0$.
\end{definition}

As with graphs, we call a vertex $v \in G^0$ a \emph{sink} if
$v\G^1 = \emptyset$ and an \emph{infinite emitter} if $v\G^1$
is infinite.  A \emph{path} in an ultragraph $\G$ is a sequence
of ultraedges $\alpha = e_1 e_2 \ldots e_n$ with $s(e_{i+1})
\in r(e_i)$ for $1 \leq i \leq n-1$.  A \emph{cycle} is a path
$\alpha = e_1 \ldots e_n$ with $s(e_1) \in r(e_n)$.
\cite[Theorem~4.1]{Tom2} implies that $C^*(\G)$ is an
AF-algebra if and only if $\G$ has no cycles.

\begin{remark}\label{rem:ELalgis,,,}
A graph may be regarded as an ultragraph in which the range of
each ultraedge is a singleton set. The constructions of the two
$C^*$-algebras then coincide: the graph $C^*$-algebra of a
graph is the same as the ultragraph $C^*$-algebra of that graph when
regarded as an ultragraph (see \cite[\S3]{Tom} for more details).

For a $\{0,1\}$-matrix $A$ over $I$ with nonzero rows,
the canonical associated ultragraph $\G_A=(G_A^0, \G_A^1, r,s)$
is defined by $G_A^0 = \G_A^1 = I$,
$r(i)=\{j\in I : A(i,j)=1\}$ and $s(i)=i$ for $i \in \G_A^1$
(see \cite[Definition~2.5]{Tom}).
It follows from \cite[Theorem~4.5]{Tom} that
$C^*(\G_A) \cong \mathcal{O}_A$.
The ultragraph $\G_A$ has the property that
$s$ is bijective.
Conversely an ultragraph $\G = (G^0, \G^1, r, s)$ with bijective $s$
is isomorphic to $\G_A$
where $A$ is the edge matrix of $\G$.
Thus one can say that an Exel-Laca algebra
is a $C^*$-algebra of a ultragraph
with bijective source map.

From these observations,
one can see that the class of ultragraph $C^*$-algebras
contains both the class of graph $C^*$-algebras and the class
of Exel-Laca algebras.
\end{remark}

\subsection{AF-algebras}

Introductory references include \cite{Bra, Eff, GH} as well as
\cite[Chapter 6]{Dav} and \cite[\S6.1, \S6.2, and \S7.2]{Mur}.

\begin{definition}
An AF-algebra is a $C^*$-algebra that is the direct limit of a
sequence of finite-dimensional $C^*$-algebras.  Equivalently, a $C^*$-algebra $A$ is an AF-algebra
if and only if $A = \overline{\bigcup_{n=1}^\infty A_n}$ for a
sequence of finite-dimensional $C^*$-subalgebras $A_1 \subseteq
A_2 \subseteq \cdots \subseteq A$.
\end{definition}

To discuss AF-algebras, we need first to briefly discuss
inclusions of finite-dimensional $C^*$-algebras. Fix
finite-dimensional $C^*$-algebras $A = \bigoplus^m_{i=1}
M_{a_i}(\C)$ and $B = \bigoplus^n_{j=1} M_{b_j}(\C)$. Let
$M=(m_{i,j})_{i,j}$ be an $m \times n$ nonnegative integer
matrix with no zero rows such that
\begin{equation}\label{eq:dimensions fit}
\sum^m_{i=1} m_{i,j} a_j \le b_j\quad\text{for all $j$}.
\end{equation}
There exists an inclusion $\phi_M \colon A \hookrightarrow B$ with
the following property. For an element $x = (x_i)^m_{i=1} \in
A$, the image $\phi_M(x)$ of $x$ has the form $(y_j)^n_{j=1}
\in B$ where for each $j \le n$, the matrix $y_j$ is
block-diagonal with $m_{i,j}$ copies of each $x_i$ along the
diagonal and $0$'s elsewhere. (Equation~\ref{eq:dimensions fit}
ensures that this is possible.) The map $\phi_M$ is not
uniquely determined by this property, but its unitary
equivalence class is.

Every inclusion $\phi$ of $A$ into $B$ is unitarily equivalent to
$\phi_M$ for some matrix $M$.
Specifically, $M=(m_{i,j})_{i.j}$ is the matrix
such that $m_{i,j}$ is equal to the rank of $1_{B_j}
\phi(p_i)$ where $1_{B_j}$ is the unit for the
$j$\textsuperscript{th} summand of $B$, and where $p_i$ is any
rank-1 projection in the $i$\textsuperscript{th} summand of
$A$. We refer to $M$ as the \emph{multiplicity matrix} of the
inclusion $\phi$.

\begin{definition} \label{Brat-diag-defn}
A \emph{Bratteli diagram} $(E,d)$ consists of a directed graph
$E = (E^0,E^1,r,s)$ together with a collection $d = \{ d_v : v
\in E^0 \}$ of positive integers satisfying the following
conditions.
\begin{enumerate}
\item[(1)] $E$ has no sinks;
\item[(2)] $E^0$ is partitioned as a disjoint union $E^0 =
  \bigsqcup_{n=1}^\infty V_n$ where each $V_n$ is a
  finite set,
\item[(3)] for each $e \in E^1$ there exists $n \in \N$
  such that $s(e) \in V_n$ and $r(e) \in V_{n+1}$; and
\item[(4)]  for each vertex $v \in E^0$ we have $d_v \geq
   \sum_{e \in E^1v} d_{s(e)}$ for all $v \in E^0$.
\end{enumerate}
\end{definition}

If $(E,d)$ is a Bratteli diagram, then $E$ is a row-finite
graph with no sinks. We regard $d$ as a labeling of the
vertices by positive integers, so to draw a Bratteli diagram we
sometimes just draw the directed graph, replacing each vertex
$v$ by its label $d_v$.

\begin{remark}
Those experienced with Bratteli diagrams will notice that our
definition of a Bratteli diagram is slightly nonstandard.
Specifically, a Bratteli diagram is traditionally specified as
undirected graph in which each edge connects vertices in
consecutive levels.  Of course, an orientation of the edges is
then implicitly chosen by the decomposition $E^0 = \bigsqcup
V_n$, so it makes no difference if we instead draw a directed
edge pointing from the vertex in level $n$ to the vertex in
level $n+1$.
\end{remark}

\begin{example} \label{Bratteli-Ex-one}
The following is an example of a Bratteli diagram:
%
\[\begin{tikzpicture}[xscale=1.5]
   \node[inner sep=1pt] (b1) at (1,0) {1};%
   \node[inner sep=1pt] (a2) at (2,1) {4};%
   \node[inner sep=1pt] (b2) at (2,0) {1};%
   \node[inner sep=1pt] (c2) at (2,-1) {1};%
   \node[inner sep=1pt] (a3) at (3,1) {8};%
   \node[inner sep=1pt] (b3) at (3,0) {2};%
   \node[inner sep=1pt] (c3) at (3,-1) {1};%
   \node[inner sep=1pt] (a4) at (4,1) {16};%
   \node[inner sep=1pt] (b4) at (4,0) {3};%
   \node[inner sep=1pt] (c4) at (4,-1) {1};%
   \node[inner sep=1pt] (a5) at (5,1) {32};%
   \node[inner sep=1pt] (b5) at (5,0) {4};%
   \node[inner sep=1pt] (c5) at (5,-1) {1};%
   \node[inner sep=1pt] (a6) at (6,1) {64};%
   \node[inner sep=1pt] (b6) at (6,0) {5};%
   \node[inner sep=1pt] (c6) at (6,-1) {1};%
   \node[inner sep=1pt] (a7) at (7,1) {$\cdots$};%
   \node[inner sep=1pt] (b7) at (7,0) {$\cdots$};%
   \node[inner sep=1pt] (c7) at (7,-1) {$\cdots$};%
   \draw[-latex] (b1)--(b2);%
   \draw[-latex] (b1)--(a2);%
   \foreach \x/\xx in {2/3,3/4,4/5,5/6,6/7} {%
       \draw[-latex] (a\x.north east) .. controls (\x.5,1.25) .. (a\xx.north west);%
       \draw[-latex] (a\x.south east) .. controls (\x.5,0.75) .. (a\xx.south west);%
       \draw[-latex] (b\x)--(b\xx);%
       \draw[-latex] (c\x)--(b\xx);%
       \draw[-latex] (c\x)--(c\xx);%
   }%
\end{tikzpicture}\]
\end{example}

\vskip1ex

Given a Bratteli diagram $(E,d)$, we construct an AF-algebra $A$ as
follows. For each $v \in E^0$, let $A_v$ be an isomorphic copy
of $M_{d_v}(\C)$, and for each $n \in \N$, let $A_n :=
\bigoplus_{v \in V_n} A_v$. For each $n$ let $\phi_n \colon A_n \to
A_{n+1}$ be the homomorphism whose multiplicity matrix is
$( |vE^1w| )_{v \in V_n, w \in V_{n+1}}$.
We then define $A$ to be the direct limit
$\varinjlim(A_n,\phi_n)$. Since the $\phi_n$ are determined up
to unitary equivalence by $(E,d)$, the isomorphism class of $A$ is
also uniquely determined by $(E,d)$.

\begin{example}
In Example~\ref{Bratteli-Ex-one}, we see that
\begin{align*}
A_1 & = \C & \phi_1(x) & = \left( \begin{smallmatrix} x & 0 & 0 & 0 \\ 0 & 0 & 0 & 0 \\ 0 & 0 & 0 & 0 \\ 0 & 0 & 0 & 0 \end{smallmatrix} \right) \oplus x \oplus 0 \\
A_2 &= M_4 (\C) \oplus \C \oplus \C & \phi_2(x, y, z) &= \left( \begin{smallmatrix} x & 0 \\ 0 & x \end{smallmatrix} \right) \oplus  \left( \begin{smallmatrix} y & 0 \\
0 & z \end{smallmatrix} \right) \oplus z \\
A_3 &= M_8(\C) \oplus M_2(\C) \oplus \C & \phi_3(x, y, z) &= \left( \begin{smallmatrix} x & 0 \\ 0 & x \end{smallmatrix} \right) \oplus  \left( \begin{smallmatrix} y & 0 \\
0 & z \end{smallmatrix} \right) \oplus z \\
& \ \vdots & & \ \vdots \\
A_n &= M_{2^{n}} (\C) \oplus M_{n-1} (\C) \oplus \C & \phi_n(x, y, z) &= \left( \begin{smallmatrix} x & 0 \\ 0 & x \end{smallmatrix} \right) \oplus  \left( \begin{smallmatrix} y & 0 \\ 0 & z \end{smallmatrix} \right) \oplus z \\
&\ \vdots & & \ \vdots
\end{align*}
\end{example}
\smallskip

The following \emph{telescoping} operation on a Bratteli
diagram preserves the associated AF-algebra.
Given $(E,d)$, we choose an increasing subsequence $\{n_m\}_{m=1}^\infty$
of $\N$.
The set of the vertices of the new Bratteli diagram
is $\bigcup_{m=1}^\infty V_{n_m}$,
the set of the edges of the new Bratteli diagram is
$\bigcup_{m=1}^\infty (V_{n_m}E^*V_{n_{m+1}})$,
and the new function $d$ is the restriction of the old
$d$ to $\bigcup_{m=1}^\infty V_{n_m}$.
For example, if we have the
portion of a Bratteli diagram shown below on the left and
remove the middle column of vertices, we obtain the portion of
the Bratteli diagram shown below on the right.
%
\[\begin{tikzpicture}[scale=1.5]
   \node[circle,inner sep=1pt] (a0) at (0,2) {1};
   \node[circle,inner sep=1pt] (c0) at (0,0) {1};
   \node[circle,inner sep=1pt] (a1) at (1,2) {1};
   \node[circle,inner sep=1pt] (b1) at (1,1) {3};
   \node[circle,inner sep=1pt] (c1) at (1,0) {4};
   \node[circle,inner sep=1pt] (a2) at (2,2) {4};
   \node[circle,inner sep=1pt] (c2) at (2,0) {10};
   \draw[-latex] (a0)--(a1);
   \draw[-latex] (a0)--(b1);
   \draw[-latex] (c0)--(c1);
   \draw[-latex] (a1)--(a2);
   \draw[-latex] (b1)--(a2);
   \draw[color=white] (b1)--(c2) node[pos=0.5,inner sep=0.1cm] (b1c2) {};
   \draw[-latex] (b1) .. controls (b1c2.north east) .. (c2);
   \draw[-latex] (b1) .. controls (b1c2.south west) .. (c2);
   \draw[-latex] (c1)--(c2);
   \node at (3.5,1) {\Large$\rightsquigarrow$};
   \node[circle,inner sep=1pt] (x0) at (5,2) {1};
   \node[circle,inner sep=1pt] (y0) at (5,0) {1};
   \node[circle,inner sep=1pt] (x2) at (7,2) {4};
   \node[circle,inner sep=1pt] (y2) at (7,0) {10};
   \draw[color=white] (x0)--(x2) node[pos=0.5,inner sep=0.2cm] (x0x2) {};
   \draw[-latex] (x0) .. controls (x0x2.north) .. (x2);
   \draw[-latex] (x0) .. controls (x0x2.south) .. (x2);
   \draw[color=white] (x0)--(y2) node[pos=0.5,inner sep=0.2cm] (x0y2) {};
   \draw[-latex] (x0) .. controls (x0y2.north east) .. (y2);
   \draw[-latex] (x0) .. controls (x0y2.south west) .. (y2);
   \draw[-latex] (y0)--(y2);
\end{tikzpicture}\]

\vskip1ex

We say that two Bratteli diagrams $(E,d)$ and $(E',d')$ are
equivalent if there is a finite sequence $(E_1, d_1),
\ldots, (E_n, d_n)$ such that $(E_1, d_1) = (E,d)$, $(E_n, d_n)
= (E', d')$ and for each $1 \le i \le n-1$, one of $(E_i, d_i)$
and $(E_{i+1}, d_{i+1})$ is a telescope of the other. Bratteli
proved in \cite{Bra} that two Bratteli diagrams give rise to
isomorphic AF-algebras if and only if the diagrams are
equivalent  (see \cite[\S 1.8]{Bra} and \cite[Theorem~2.7]{Bra}
for details).

The class of AF-algebras is closed under forming ideals and
quotients. On the other hand, the three classes of graph
$C^*$-algebras, Exel-Laca algebras, and ultragraph
$C^*$-algebras are not closed under forming ideals nor
quotients. However we can show the following.

\begin{lemma}\label{ideal_quotient}
The class of graph AF-algebras is closed under forming ideals
and quotients.
\end{lemma}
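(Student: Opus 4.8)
The plan is to use the known structure theory of ideals in graph $C^*$-algebras together with the AF-characterization via cycles. Recall that if $E$ is a graph and $C^*(E)$ is an AF-algebra, then by \cite[Theorem~2.4]{KPR} the graph $E$ has no cycles. For the ideal case, I would invoke the description of gauge-invariant ideals of $C^*(E)$ in terms of \emph{saturated hereditary subsets} $H \subseteq E^0$ (together with breaking vertices in the non-row-finite setting). However, rather than wrestle with breaking vertices directly, a cleaner route is available: since $C^*(E)$ is AF it has real rank zero and every ideal is gauge-invariant (indeed, for an AF-algebra all ideals are closed under any continuous action, and more to the point the ideal structure of an AF-algebra is completely determined by its Bratteli diagram). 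So it suffices to show that each ideal $I$ and each quotient $C^*(E)/I$ is again a graph $C^*$-algebra.

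First I would handle quotients. Given a saturated hereditary subset $H \subseteq E^0$, the quotient $C^*(E)/I_H$ is isomorphic to $C^*(E/H)$ where $E/H$ is the quotient graph obtained by deleting the vertices in $H$ and all edges into or out of $H$; this is standard (see e.g.\ \cite{BHRS} or \cite{DT}), and needs only minor care because deleting edges can create new sinks or infinite emitters, which is fine since arbitrary graphs are allowed. Since $E$ has no cycles, neither does $E/H$, so $C^*(E/H)$ is again an AF graph $C^*$-algebra, and we are done with quotients. For ideals, the ideal $I_H$ generated by $\{p_v : v \in H\}$ is Morita equivalent to $C^*(E_H)$, where $E_H$ is the subgraph of $E$ on vertex set $\{v \in E^0 : v \ge w \text{ for some } w \in H\}$ with the appropriate edge set (or one uses the ``hereditary subgraph'' description from \cite{BPRS, DT}); again $E_H$ has no cycles. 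The subtlety is that $I_H$ is Morita equivalent to, but not obviously equal to, a graph $C^*$-algebra.

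To upgrade Morita equivalence to isomorphism for the ideal, I would use the fact that $I_H$ is itself an AF-algebra (being an ideal in an AF-algebra) together with the following observation: an AF-algebra $B$ that is Morita equivalent to a graph $C^*$-algebra $C^*(F)$ with $F$ having no cycles is isomorphic to a graph $C^*$-algebra. Indeed $B \cong p(C^*(F)\otimes\K)p$ for some projection $p$, and $C^*(F)\otimes\K$ is itself a graph $C^*$-algebra (it is $C^*(SF)$ for a suitable ``stabilized'' graph $SF$ with no cycles, by \cite{BPRS} or a direct construction attaching a head to each vertex), hence an AF graph $C^*$-algebra. Then $B$ is a full corner determined by a projection, and by the Drinen--Tomforde desingularization-style argument — or more simply, by passing to the Bratteli diagram: $B$ corresponds to a ``hereditary directed'' subdiagram of the Bratteli diagram of $C^*(SF)$, and any Bratteli diagram satisfying the constraints of Definition~\ref{Brat-diag-defn} yields a graph $C^*$-algebra (this is the content of the constructions developed later in the paper, or of \cite{Drinen2000}). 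Thus $B = I_H$ is a graph $C^*$-algebra.

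The main obstacle I anticipate is the bookkeeping around non-row-finiteness: when $E$ is not row-finite, ideals need not be of the simple form $I_H$ — one must account for breaking vertices, so the general gauge-invariant ideal is $I_{(H,S)}$ for a saturated hereditary pair. I would deal with this by noting that adding the finitely many ``breaking-vertex'' generators $p_v^{H,S}$ only modifies the relevant graph by splitting each breaking vertex, which still introduces no cycles, so the argument goes through verbatim. A possible shortcut that avoids breaking vertices entirely is to first telescope/desingularize: every AF graph $C^*$-algebra is the $C^*$-algebra of a \emph{row-finite} graph with no sinks (this is exactly \cite{Drinen2000} and is reproved constructively later in this paper), and for row-finite graphs with no sinks all gauge-invariant ideals are of the form $I_H$ and all quotients are $C^*(E/H)$ with $E/H$ having no sinks after re-desingularizing. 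So the cleanest writeup is: reduce to the row-finite no-sinks case, where the ideal and quotient formulas are clean and both produce graphs with no cycles, hence AF graph $C^*$-algebras.
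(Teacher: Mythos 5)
Your treatment of quotients is fine and essentially matches the paper: once every ideal is known to be gauge-invariant (the paper gets this from the observation that a graph with no cycles vacuously satisfies Condition~(K), citing \cite[Corollary~3.8]{BHRS}, rather than from your informal AF-invariance argument), the quotient by a gauge-invariant ideal is the $C^*$-algebra of a quotient graph with no cycles by \cite[Theorem~3.6]{BHRS}. The genuine gap is in the ideal case. You correctly flag that the ideal is only Morita equivalent to $C^*(E_H)$ and that this must be upgraded to an isomorphism, but the principle you invoke to do the upgrade --- ``an AF-algebra $B$ that is Morita equivalent to a graph $C^*$-algebra $C^*(F)$ with $F$ having no cycles is isomorphic to a graph $C^*$-algebra'' --- is false, and in fact contradicts results of this very paper: $M_{2^\infty}$ is Morita equivalent to the stable AF-algebra $M_{2^\infty}\otimes\K$, which is the $C^*$-algebra of a row-finite graph with no sinks (and no cycles) by Corollary~\ref{cor:stable}, yet $M_{2^\infty}$ is not a graph $C^*$-algebra by Corollary~\ref{UHF-inf-not-graph}. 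Likewise your assertion that ``any Bratteli diagram satisfying the constraints of Definition~\ref{Brat-diag-defn} yields a graph $C^*$-algebra'' cannot mean that the associated AF-algebra is a graph $C^*$-algebra (same counterexample), and your proposed shortcut rests on the false premise that every AF graph $C^*$-algebra is the $C^*$-algebra of a row-finite graph with no sinks: \cite{Drinen2000} gives only Morita equivalence, and Theorem~\ref{no-unital-quotient-then-graph-alg} shows the isomorphism version fails already for $\C$ or $M_2(\C)$.

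What the paper actually does for ideals is cite \cite[Lemma~1.6]{DHS}, where Deicke, Hong and Szyma\'nski construct, for a gauge-invariant ideal $I$ of $C^*(E)$, an explicit graph (obtained from the relevant subgraph of $E$ by attaching additional vertices and edges) whose $C^*$-algebra is isomorphic --- not merely Morita equivalent --- to $I$; that construction introduces no cycles, so the ideal is again a graph AF-algebra. Some such concrete graph-theoretic construction is unavoidable here: no purely Morita-theoretic or Bratteli-diagram argument can close the gap, precisely because the class of graph AF-algebras is not closed under Morita equivalence.
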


\begin{proof}
If $E$ is a graph and the graph $C^*$-algebra $C^*(E)$ is an
AF-algebra, then $E$ has no cycles by \cite[Theorem~2.4]{KPR}.
Thus $E$ vacuously satisfies Condition~(K),
and it follows that every ideal of $C^*(E)$ is gauge-invariant
by \cite[Corollary~3.8]{BHRS}.
Thus every ideal of $C^*(E)$ as well as its quotient
is a graph $C^*$-algebra
by \cite[Lemma~1.6]{DHS} and \cite[Theorem~3.6]{BHRS}.
\end{proof}

\begin{remark}
A quotient of an Exel-Laca AF-algebra
need not be an Exel-Laca algebra.
For example, if $\MU{\K}$
is the minimal unitization of the compact operators $\K$
on a separable infinite-dimensional Hilbert space, then
$M_2(\MU{\K})$ is an Exel-Laca AF-algebra that has a quotient,
$M_2(\C)$, that is not an Exel-Laca algebra --- for details see
Example~\ref{E-L-quotient-not-Ex} and
Corollary~\ref{No-fin-dim-E-L-Cor}.
Whether ideals of Exel-Laca AF-algebras
are necessarily Exel-Laca algebras is an open question.
We also do not know
whether ideals and quotients of
ultragraph AF-algebras are necessarily ultragraph
$C^*$-algebras.
As we shall see later, this uncertainty causes
problems in the analyses of Exel-Laca AF-algebras
and ultragraph AF-algebras.
\end{remark}

\begin{lemma}
The three classes of graph AF-algebras, Exel-Laca AF-algebras, and ultragraph AF-algebras are closed
under taking direct sums.
\end{lemma}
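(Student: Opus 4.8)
The plan is to show that if $A_1, \dots, A_k$ are $C^*$-algebras in one of the three classes, then so is $A_1 \oplus \cdots \oplus A_k$; by induction it suffices to treat $k = 2$. The key observation is that the three classes are each closed under disjoint unions of the underlying combinatorial objects, and that the $C^*$-algebra of a disjoint union decomposes as a direct sum. First I would handle graph $C^*$-algebras: given graphs $E$ and $F$, form the disjoint union $E \sqcup F$ (disjoint unions of the vertex and edge sets, with $r$ and $s$ inherited). A Cuntz--Krieger $(E \sqcup F)$-family is visibly the same thing as a pair consisting of a Cuntz--Krieger $E$-family and a Cuntz--Krieger $F$-family with mutually orthogonal everything, so the universal property gives $C^*(E \sqcup F) \cong C^*(E) \oplus C^*(F)$. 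Since $E \sqcup F$ has no cycles whenever neither $E$ nor $F$ does, $C^*(E \sqcup F)$ is again an AF-algebra, so the class of graph AF-algebras is closed under direct sums.

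Next I would do the same for ultragraph $C^*$-algebras. Given ultragraphs $\G = (G^0, \G^1, r, s)$ and $\G' = (G'^0, \G'^1, r', s')$, form $\G \sqcup \G'$ with vertex set $G^0 \sqcup G'^0$ and ultraedge set $\G^1 \sqcup \G'^1$; one checks that the generated algebra $(\G \sqcup \G')^0$ is naturally identified with $\G^0 \oplus \G'^0$ (as a subalgebra of $\Pow(G^0 \sqcup G'^0)$, every set splits as a disjoint union of a subset of $G^0$ and a subset of $G'^0$, and the generating singletons and ranges all lie in one piece or the other). A representation of $(\G \sqcup \G')^0$ together with partial isometries satisfying Definition~\ref{dfn:CK-G-fam} then corresponds exactly to a pair of such data for $\G$ and $\G'$ separately with orthogonal ranges, so again the universal property yields $C^*(\G \sqcup \G') \cong C^*(\G) \oplus C^*(\G')$. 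Absence of cycles is preserved under disjoint union, so ultragraph AF-algebras are closed under direct sums.

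Finally, for Exel-Laca algebras I would use the description in Remark~\ref{rem:ELalgis,,,} that an Exel-Laca algebra is precisely the $C^*$-algebra of an ultragraph with bijective source map. If $\G$ and $\G'$ are two such ultragraphs, then $\G \sqcup \G'$ again has bijective source map, and $C^*(\G \sqcup \G') \cong C^*(\G) \oplus C^*(\G')$ by the previous paragraph, so the direct sum of two Exel-Laca AF-algebras is again an Exel-Laca AF-algebra. Alternatively one can work directly with block-diagonal $\{0,1\}$-matrices $A \oplus B$ over the disjoint union $I \sqcup J$ of the index sets and verify that an Exel-Laca $(A \oplus B)$-family decomposes accordingly, but routing through the ultragraph picture is cleaner and avoids fussing with the sums in \eqref{conditionfour}.

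The main obstacle, such as it is, is the verification in the ultragraph case that $(\G \sqcup \G')^0$ is the internal direct sum $\G^0 \oplus \G'^0$ inside $\Pow(G^0 \sqcup G'^0)$, and that a representation of it is the same as a pair of representations of $\G^0$ and $\G'^0$ with orthogonal ranges: one must check that the smallest algebra containing the singletons and ranges of $\G \sqcup \G'$ does not produce any ``mixed'' sets beyond the obvious disjoint unions, which follows because the algebra operations (intersection, union, set difference) cannot create a connection between the two disjoint pieces. Once that bookkeeping is in place, the universal-property argument is routine, and the statement about AF-algebras follows since a finite direct sum of AF-algebras is an AF-algebra (equivalently, a disjoint union of cycle-free (ultra)graphs is cycle-free).
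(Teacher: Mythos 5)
Your proposal is correct and follows essentially the same route as the paper: the paper's proof is a one-line appeal to the fact that each of the classes of AF-algebras, graph $C^*$-algebras, Exel-Laca algebras, and ultragraph $C^*$-algebras is individually closed under direct sums, and your argument simply supplies the details of that fact via disjoint unions of the underlying (ultra)graphs and the bijective-source-map characterization of Exel-Laca algebras. The bookkeeping you flag for $(\G \sqcup \G')^0$ and the preservation of cycle-freeness are handled correctly, so nothing is missing.
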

\begin{proof}
Each of the four classes of AF-algebras, graph $C^*$-algebras,
Exel-Laca algebras, and ultragraph $C^*$-algebras is closed
under forming direct sums. The result follows.
\end{proof}

\section{Some technical lemmas} \label{lemmas-sec}

In this section we establish some technical results for
Bratteli diagrams and inclusions of finite-dimensional
$C^*$-algebras.  We will use these technical results to prove
many of our realization results in \S\ref{results-sec}.

\begin{lemma} \label{fin-quotient-lem-1}
Suppose $A$ is an AF-algebra that has no quotients isomorphic
to $\C$, and suppose that $(E,d)$ is a Bratteli diagram for
$A$. Let $H = \{ v \in E^0 : d_v = 1 \}$, and let $F$ be the
subgraph of $E$ such that $F^0 := E^0 \setminus H$
and $F^1 := \{e \in E^1 : s(e) \not\in H\}$ with $r, s \colon F^1 \to F^0$
inherited from $E$. Let $d \colon F^0 \to \N$ be the
restriction of $d \colon E^0 \to \N$. Then $(F,d)$ is a
Bratteli diagram for $A$.
\end{lemma}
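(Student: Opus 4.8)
The plan is to check first that $(F,d)$ satisfies the four axioms for a Bratteli diagram, and then to identify its AF-algebra with $A$ by realizing it as a closed ideal of $A$ whose quotient, being commutative, must vanish. For the axioms, write $V_n$ for the levels of $E$ and put $W_n:=V_n\setminus H$, so $F^0=\bigsqcup_n W_n$ with each $W_n$ finite. The one point that needs care is that axiom~(4) for $(E,d)$ forces every edge of $E$ whose range lies in $H$ to have its source in $H$ as well: if $r(e)=v$ with $d_v=1$ then $1=d_v\ge\sum_{e'\in E^1v}d_{s(e')}\ge d_{s(e)}$, so $d_{s(e)}=1$. Equivalently, every edge of $E$ emitted by a vertex of $F^0$ has its range in $F^0$, so $F=(F^0,F^1,r,s)$ is a well-defined graph; axioms~(2) and~(3) are then immediate, axiom~(1) holds because $E$ has no sinks and any edge of $E$ emitted by a vertex of $F^0$ already lies in $F^1$, and axiom~(4) for $(F,d)$ follows from axiom~(4) for $(E,d)$ together with $F^1v\subseteq E^1v$ for each $v\in F^0$. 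Write $B$ for the AF-algebra of the Bratteli diagram $(F,d)$.

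Now realize $A=\varinjlim(A_n,\phi_n)$ as in the construction, with $A_n=\bigoplus_{v\in V_n}M_{d_v}(\C)$ and $\phi_n$ the connecting map with multiplicity matrix $(|vE^1w|)_{v\in V_n,\,w\in V_{n+1}}$; since the $\phi_n$ are inclusions, each canonical map $\iota_{\infty,n}\colon A_n\to A$ is an isometric embedding. Put $B_n:=\bigoplus_{v\in W_n}M_{d_v}(\C)$ and $C_n:=\bigoplus_{v\in V_n\cap H}\C$, so that $A_n=B_n\oplus C_n$ with $B_n$ an ideal of $A_n$. Because $C_{n+1}$ is commutative while $B_n$ is a direct sum of matrix algebras of size $\ge 2$ (equivalently, because there are no edges from $W_n$ to $V_{n+1}\cap H$ by the observation above), the composite $B_n\hookrightarrow A_n\xrightarrow{\phi_n}A_{n+1}\twoheadrightarrow C_{n+1}$ vanishes, so $\phi_n(B_n)\subseteq B_{n+1}$, and $\phi_n|_{B_n}\colon B_n\to B_{n+1}$ has multiplicity matrix $(|vE^1w|)_{v\in W_n,\,w\in W_{n+1}}=(|vF^1w|)_{v,w}$. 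Hence $\varinjlim(B_n,\phi_n|_{B_n})\cong B$; moreover its connecting maps are injective, so the inclusions $B_n\hookrightarrow A_n$ induce an isometric embedding of $B$ onto the closed ideal $\mathcal B:=\overline{\bigcup_n\iota_{\infty,n}(B_n)}$ of $A$.

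It remains to prove $\mathcal B=A$, for which I will show $A/\mathcal B=0$. Let $\pi\colon A\to A/\mathcal B$ be the quotient map, so $A/\mathcal B=\overline{\bigcup_n\pi(\iota_{\infty,n}(A_n))}$; each $\pi(\iota_{\infty,n}(A_n))$ is a quotient of $A_n$ by an ideal containing $B_n$, hence a quotient of $A_n/B_n\cong C_n$, hence commutative, so $A/\mathcal B$ is commutative. If $A/\mathcal B$ were nonzero, then being a nonzero commutative $C^*$-algebra it would have a quotient isomorphic to $\C$, and composing with $\pi$ would exhibit $\C$ as a quotient of $A$, contrary to hypothesis; therefore $A/\mathcal B=0$, i.e.\ $A=\mathcal B\cong B$. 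The genuine content of the argument is the use of axiom~(4) in the first paragraph (without which the deleted summands could survive into the limit) and the identification of $A/\mathcal B$ as commutative; once those are in place the conclusion is forced, and the only bookkeeping point to watch is that $\mathcal B$ is shown to be all of $A$, not merely an ideal.
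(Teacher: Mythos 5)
Your proof is correct, but it reaches the conclusion by a genuinely different route at the key step. Both arguments begin the same way: condition (4) of the Bratteli-diagram axioms forces every edge with range in $H$ to have source in $H$, so $(F,d)$ is a bona fide Bratteli diagram and the connecting maps carry $B_n = \bigoplus_{v \in V_n \setminus H} M_{d_v}(\C)$ into $B_{n+1}$ with the expected multiplicities. The paper then proves surjectivity of the induced embedding $B \hookrightarrow A$ element by element: it establishes the combinatorial claim that for each $v \in V_n$ there is an $m$ such that every $w \in V_{n+m}$ reachable from $v$ has $d_w \geq 2$ (arguing by contradiction via a pigeonhole construction of an infinite path through $H$, whose associated quotient of $A$ would be $\C$), and concludes that $\phi^E_{n,n+m}(A_v)$ already lies in the image of $B_{n+m}$. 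You instead observe that the image $\mathcal{B}$ of $B$ is a closed two-sided ideal of $A$ (because each $B_n$ is an ideal of $A_n$, compatibly with the connecting maps) and that $A/\mathcal{B}$ is the closure of an increasing union of quotients of the commutative algebras $A_n/B_n \cong C_n$, hence commutative, hence zero since a nonzero commutative $C^*$-algebra surjects onto $\C$. Your version is shorter and avoids the infinite-path construction; the paper's version yields the more refined local statement about eventual escape from $H$, which serves as the prototype for the analogous arguments that are genuinely needed in Lemmas~\ref{fin-quotient-lem-2} and~\ref{unital-quotient-lem}. The one point you leave implicit --- that $\overline{\bigcup_n \iota_{\infty,n}(B_n)}$ really is an ideal of $A$ --- is standard and follows from the compatibility you have already established, so there is no gap.
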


\begin{proof}
First note that if $e \in E^1$ with $r(e) \in H$, then
$d_{r(e)}=1$ and hence $d_{s(e)} = 1$ and $s(e) \in H$.  Hence
$F$ is in fact a subgraph of $E$.

We claim that for any $n \in \N$ and $v \in V_n$, there exists
$m \in \N$ such that whenever $w \in V_{n+m}$ and $v \geq w$,
we have $d_w \geq 2$.  We fix $n \in \N$ and $v \in V_n$,
suppose that there is no such $m$, and seek a contradiction.
Let $v_0 := v$. Inductively choose $e_i \in E^1$ such that
$s(e_i) = v_{i-1}$ and such that for each $m \in \N$ there
exists $w \in V_{n+i+m}$ with $r(e_i) \ge w$ and $d_w = 1$,
setting $v_i := r(e_i)$. Then the infinite path $e_1e_2 \ldots$
satisfies $d_{s(e_n)} = 1$ for all $n$. Hence $\{ x \in E^0 : x
\not\geq s(e_n) \text{ for any $n$} \}$ is a saturated
hereditary subset and the quotient of $A$ by the corresponding
ideal is an AF-algebra with Bratteli diagram
\[\begin{tikzpicture}[xscale=1.5]
   \foreach \x in {1,2,3,4,5} {
       \node[inner sep=1pt] (\x) at (\x,0) {$\ 1 \ $};
   }
   \node (6) at (6,0) {$\cdots$};
   \foreach \x/\xx in {1/2,2/3,3/4,4/5,5/6} {
       \draw[-latex] (\x)--(\xx);
   }
\end{tikzpicture}\]
Hence this quotient is isomorphic to $\C$, which contradicts
our hypothesis on $A$.  This establishes the claim.

Let $B$ be the AF-algebra associated to the Bratteli diagram
$F$, and let $\iota_n \colon B_n \to A_n$ denote obvious inclusion
of the $n$\textsuperscript{th} approximating subalgebra of $B$
determined by $F$ into the $n$\textsuperscript{th}
approximating subalgebra of $A$ determined by $E$.  Let
$\phi^E_{n,m} \colon A_n \to A_m$ be the connecting maps in the
directed system associated to $E$, and let $\phi^E_{n, \infty}
: A_n \to A$ be the inclusion of $A_n$ into the direct limit
algebra $A$.  Likewise, let $\phi^F_{n,m} \colon B_n \to B_m$ be the
connecting maps in the directed system associated to $F$, and
let $\phi^F_{n, \infty} \colon B_n \to B$ be the inclusion of $B_n$
into the direct limit algebra $B$.

We see that $\phi_{n,n+1}^E \circ \iota_n = \iota_{n+1} \circ
\phi_{n,n+1}^F$ for all $n$, and thus by the universal property
of the direct limit $B = \varinjlim(B_n, \phi^F_n)$, there is a
\shom $\iota_\infty \colon B \to A$ with $\phi_{n, \infty}^E \circ
\iota_n = \iota_\infty \circ \phi_{n, \infty}^F$.  Since each
$\iota_n$ is injective, it follows that $\iota_\infty$ is
injective.  We shall also show that $\iota_\infty$ is also
surjective and hence an isomorphism. It suffices to show that
for any $v \in V_n$ and for any $a$ in the direct summand $A_v$
of $A_n$ corresponding to $v$, we have $\phi^E_{n,\infty}(a)
\in \operatorname{im} \iota_\infty$. By the previous paragraph
we may choose $m$ so that whenever $w \in V_{n+m}$ and $v \geq
w$, then $d_w \geq 2$.  It follows that
\[
\phi^E_{n, n+m}(a) \in \bigoplus_{\substack{w \in
V_{n+m} \\ d_w \geq 2}} M_{d_w} (\C) \subseteq \iota_{n+m}
(B_{n+m}),
\]
so that $\phi^E_{n, n+m}(a) = \iota_{n+m} (b)$ for
some $b \in B_{n+m}$.  Thus
\[
\phi^E_{n,\infty} (a) = \phi^E_{n+m,
\infty} \circ \phi^E_{n, n+m}(a) = \phi^E_{n+m, \infty} \circ
\iota_{n+m} (b) = \iota_\infty \circ \phi^F_{n+m, \infty} (b)
\in \operatorname{im} \iota_\infty
\]
and $\iota_\infty$ is surjective. Hence $\iota_\infty$ is an isomorphism
as required.
\end{proof}

\begin{lemma} \label{fin-quotient-lem-2}
Suppose $A$ is an AF-algebra with no nonzero finite-dimensional
quotients.  Then any Bratteli diagram for $A$ can be telescoped
to obtain a second Bratteli diagram $(E,d)$ for $A$ such that
for all $n \in \N$ and for each $v \in V_{n+1}$ either $d_v >
\sum_{e \in E^1v} d_{s(e)}$ or there exists $w \in V_n$ with
$|wE^1v| \geq 2$.
\end{lemma}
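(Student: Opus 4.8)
The plan is to argue by contradiction in a localized way: if some vertex $v$ in level $n+1$ fails the conclusion, then $d_v = \sum_{e \in E^1 v} d_{s(e)}$ and every vertex $w$ in level $n$ has $|wE^1 v| \le 1$. I want to show that repeated failure across all levels forces $A$ to have a nonzero finite-dimensional quotient, contradicting the hypothesis. The key bookkeeping device is the following observation: if $d_v = \sum_{e \in E^1 v} d_{s(e)}$ and no source vertex sends two edges into $v$, then the inclusion of $A_n$-summands into $A_v$ is ``as tight as possible'' — the connecting map restricted to those summands is unital into $M_{d_v}(\mathbb{C})$ and involves each incoming summand with multiplicity one. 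Tracking such tight inclusions along an infinite path will produce a quotient that is a direct limit of the resulting subsystem, and I will arrange the telescoping so that this subsystem has bounded matrix sizes, hence a finite-dimensional direct limit.

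The steps I would carry out, in order, are: (1) First telescope an arbitrary Bratteli diagram for $A$ using Lemma~\ref{fin-quotient-lem-1} (noting $A$ has no quotient isomorphic to $\mathbb{C}$ since it has no nonzero finite-dimensional quotients) so that we may assume $d_v \ge 2$ for all $v$. (2) Call a vertex $v \in V_{n+1}$ \emph{bad} if $d_v = \sum_{e \in E^1 v} d_{s(e)}$ and $|wE^1 v| \le 1$ for all $w \in V_n$; the conclusion says after a further telescope no vertex is bad. (3) Suppose no telescoping removes all bad vertices. The combinatorial heart is then to find an infinite path $w_1 \to w_2 \to \cdots$ with $w_k \in V_{n_k}$ along which the relevant inclusions are tight in the above sense \emph{and} the matrix sizes $d_{w_k}$ are eventually constant — or more precisely, to find a saturated hereditary subset whose complement carries the badness and for which the quotient Bratteli diagram telescopes to one with constant, finite vertex labels. (4) Conclude that the corresponding quotient of $A$ is finite dimensional, the desired contradiction.

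The main obstacle — and the step requiring genuine care — is step (3): extracting from ``badness persists under all telescopings'' a concrete quotient that is finite dimensional. Badness at a single vertex is weak; it does not by itself bound $d_v$, since $v$ could receive many incoming edges from distinct small summands. The right move is probably a counting/pigeonhole argument on the quantity $\sum_{v \in V_n} d_v$ or on the number of summands, combined with a König's-lemma style selection of an infinite branch of bad vertices, showing that along a suitable telescoped subsequence the labels must stabilize (if they grew without bound one could telescope away the badness, or one directly builds a quotient isomorphic to a matrix algebra). Handling the interplay between the freedom to telescope and the need to keep labels bounded is the delicate part; the rest is the now-routine translation between saturated hereditary sets, quotient Bratteli diagrams, and telescoping established earlier in the paper.
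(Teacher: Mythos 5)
Your outline follows the same broad strategy as the paper's proof --- assume the conclusion fails, extract an infinite branch of ``bad'' vertices, and exhibit a finite-dimensional quotient --- but the step you yourself flag as ``requiring genuine care'' is exactly the step that is missing, and the mechanisms you sketch for it are not the ones that work. First, your notion of a bad vertex is phrased relative to the adjacent level, which is not stable under telescoping; the hypothesis one actually has to refute is that there is a level $W_m$ such that for every $n\ge m$ some $v\in W_n$ satisfies $d_v=\sum_{\alpha\in W_mF^*v}d_{s(\alpha)}$ and $|wF^*v|\le 1$ for all $w\in W_m$ (paths from a fixed earlier level, not single edges from $V_n$). Second, the K\"onig-style selection of an infinite bad branch requires knowing that badness propagates upstream (if $x$ is bad and $v\ge x$, then $v$ is bad); this is obtained by forcing equality in a chain of inequalities comparing $\sum_{\alpha\in W_1F^*x}d_{s(\alpha)}$ with $\sum_{\beta\in W_nF^*x}d_{s(\beta)}$, and your proposal does not mention it.

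Third, and most importantly, the pigeonhole on $\sum_v d_v$ and the remark that unbounded labels ``could be telescoped away'' miss the real point. Labels of bad vertices are automatically bounded by $\sum_{w\in W_1}d_w$ (since each $|wF^*v|\le 1$), so unbounded growth never arises; and bounded labels alone do not give a finite-dimensional quotient, since a unital subsystem with bounded labels could a priori have an infinite-dimensional limit such as $C(X)$ for a Cantor set (such a limit does still have finite-dimensional quotients, but that requires a further argument you do not supply). The ingredient that actually closes the argument is that, far enough along the infinite bad branch $\lambda$, the condition $|wF^*x|\le 1$ forces $\lambda_n$ to be the \emph{unique} edge into $r(\lambda_n)$: any other incoming edge would create a second path from some $w\in W_1$ to $r(\lambda_n)$. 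This simultaneously makes $d_{s(\lambda_n)}$ eventually constant and collapses the quotient Bratteli diagram (over the saturated hereditary set of vertices not connecting to the branch) to a stationary single line, so the quotient is a full matrix algebra. Without this step your proposal is an accurate plan rather than a proof.
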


\begin{proof}
Let $(F,d)$ be a Bratteli diagram for $A$ with $F^0$
partitioned into levels as $F^0 = \bigsqcup_{n=1}^\infty W_n$.
It suffices to show that for every $m$ there exists $n \ge m$
such that for every $v \in W_n$ satisfying $d_v = \sum_{\alpha
\in W_m F^* v} d_{s(\alpha)}$, there exists $w \in W_m$ with
$|w F^* v| \ge 2$.
We suppose not, and seek a contradiction. That is, we suppose
that there exists $m$ such that for every $n \ge m$ the set
\[
X_n := \Big\{x \in W_n : d_x = \sum_{\alpha \in W_m F^* x} d_{s(\alpha)}\text{ and } |wF^* x| \le 1\text{ for all }w \in W_m\Big\}
\]
is nonempty.
By telescoping $(F,d)$ to $\bigsqcup_{n=m}^\infty W_n$
we may assume $m=1$.
We claim that if $n \le p$,
$x \in X_p$, and $v \in W_n$ with $v \ge x$, then $v \in X_n$.
Indeed,
\begin{align}
d_x
	&= \sum_{\alpha \in W_1 F^* x} d_{s(\alpha)}\notag \\
	&= \sum_{\beta \in W_n F^* x}\Big(\sum_{\gamma \in W_1 F^* s(\beta)} d_{s(\gamma)}\Big) \label{line1} \\
	&\le \sum_{\beta \in W_n F^* x} d_{s(\beta)} \label{line2} \\
	&\le d_x. \notag
\end{align}
Thus we have equality throughout, and
the equality of \eqref{line1}~and~\eqref{line2} implies
$d_{s(\beta)} = \sum_{\gamma \in W_1 F^* s(\beta)}
d_{s(\gamma)}$ for each $\beta \in W_n F^* x$.
In particular, since $v \ge x$,
we have that $d_v = \sum_{\gamma \in W_1 F^* v} d_{s(\gamma)}$.
Moreover for each $w \in W_1$,
\[
1 \ge |w F^* x| \ge |w F^* v| \, |v F^* x|,
\]
so $v \ge x$ implies that $|w F^* v| \le 1$,
and $v \in X_n$ as required.

We shall now construct an infinite path $\lambda = \lambda_1
\lambda_2 \ldots$ in $F$ such that $s(\lambda_n) \in X_{n}$ for
all $n$. If $x \in X_{n}$, then since $d_x$ is nonzero and $d_x
= \sum_{\alpha \in W_1 F^* x} d_{s(\alpha)}$, there exists $w
\in W_1$ such that $w \ge x$. Since $W_1$ is finite, there
exists $w_1 \in W_1$ such that for infinitely many $n$ there
exists $x \in X_{n}$ with $w_1 \geq x$.  Since $w_1F^1$ is
finite, there exists $\lambda_1 \in w_1F^1$ such that for
infinitely many $n$, we have $r(\lambda_1) \ge x$ for some $x
\in X_{n}$. We set $w_2 := r(\lambda_1)$ which is in $X_{2}$ by
the claim above. Proceeding in this way, we produce an infinite
path $\lambda = \lambda_1 \lambda_2 \ldots$ in $F$ such that
$s(\lambda_n) \in X_{n}$ for all $n$.

For each $w \in W_1$ such that $w \ge s(\lambda_n)$ for some
$n$, we define $n_w := \min\{n : w \ge s(\lambda_n)\}$. Let $N
:= \max\{n_w : \text{$w \in W_1$ and $w \ge s(\lambda_n)$ for
some $n$}\}$. We claim that $F^1 r(\lambda_n) = \{ \lambda_n
\}$ for all $n \geq N$. Fix $n \geq N$, and $e \in F^1
r(\lambda_n)$. Since $r(\lambda_n) = s(\lambda_{n+1}) \in
X_{n+1}$, we have $s(e) \in X_{n}$. Hence $W_1 F^* s(e)$ is
nonempty, so we may fix $\beta \in W_1 F^* s(e)$. Now $\beta e$
is the unique path in $s(\beta) F^* r(\lambda_n)$ by definition
of $X_{n+1}$. Let $\alpha$ be the unique path from $s(\beta)$
to $s(\lambda_{n_{s(\beta)}})$. Since $n_{s(\beta)} \le N \le
n$, we have $\alpha \lambda_{n_{s(\beta)}}
\lambda_{n_{s(\beta)} + 1} \ldots \lambda_{n}$ in $s(\beta) F^*
r(\lambda_n)$, and the uniqueness of this path then forces
$\beta e = \alpha \lambda_{n_{s(\beta)}} \lambda_{n_{s(\beta)}
+ 1} \ldots \lambda_{n}$, and in particular $e = \lambda_{n}$.
Thus $F^1 r(\lambda_n) = \{ \lambda_n \}$ as required.

Since $F^1 r(\lambda_n) = \{ \lambda_n \}$, we have $W_1 F^*
r(\lambda_n) = W_1 F^* \lambda_n = \{\beta\lambda_n : \beta \in
W_1 F^*s(\lambda_n)\}$. Hence that $r(\lambda_n) \in X_{n+1}$
and that $s(\lambda_n) \in X_{n}$ imply that
\[
d_{r(\lambda_n)}= \sum_{\alpha \in W_1 F^* r(\lambda_n)}d_{s(\alpha)}
= \sum_{\beta \in W_1 F^* s(\lambda_n)}d_{s(\beta)}
= d_{s(\lambda_{n})}
\]
for all $n \geq N$. This implies $d_{s(\lambda_{n})}
=d_{s(\lambda_{N})}$ for all $n \ge N$. Moreover, $\{ y \in F^0
: \text{$y \not\geq s(\lambda_n)$ for all $n$} \}$ is a
saturated hereditary subset, and the quotient of $A$ by the
ideal corresponding to this set is an AF-algebra with a
Bratteli diagram of the form
\[\begin{tikzpicture}[xscale=2]
   \foreach \x in {1,2,3,4,5} {%
       \node[inner sep=1pt] (\x) at (\x,0) {$d_{s(\lambda_{N})}$};%
   }%
   \node (6) at (6,0) {$\cdots$};%
   \foreach \x/\xx in {1/2,2/3,3/4,4/5,5/6} {%
       \draw[-latex] (\x)--(\xx);%
   }%
\end{tikzpicture}\]
Hence this quotient is isomorphic to $M_{d_{s(\lambda_{N})}}(\C)$,
which contradicts the hypothesis that $A$ has no finite-dimensional quotients.
\end{proof}

\begin{lemma} \label{f-d-quotient-char-lem}
Let $A$ be an AF-algebra.  Then $A$ has no nonzero
finite-dimensional quotients if and only if there exists a
Bratteli diagram $(E,d)$ for $A$ satisfying the following two
properties:
\begin{itemize}
\item[(1)] $d_v \geq 2$ for all $v \in E^0$; and \item[(2)]
   for all $n \in \N$ and for each $v \in V_{n+1}$ either
   $d_v > \sum_{e \in E^1v} d_{s(e)}$ or there exists $w
   \in V_n$ with $|w E^1v| \geq 2$.
\end{itemize}
\end{lemma}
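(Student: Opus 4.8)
The statement is an equivalence, and the plan is to prove the two implications separately; the forward direction is essentially a packaging of the two preceding lemmas, while the real work lies in the reverse direction.

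For $(\Rightarrow)$: assuming $A$ has no nonzero finite-dimensional quotients, I note that the one-dimensional algebra $\C$ is finite-dimensional, so $A$ in particular has no quotient isomorphic to $\C$. Lemma~\ref{fin-quotient-lem-1} then lets me delete from any Bratteli diagram for $A$ all vertices $v$ with $d_v = 1$ and still have a Bratteli diagram $(F,d)$ for $A$, which now satisfies $d_v \ge 2$ for every $v \in F^0$, i.e.\ property~(1). Applying Lemma~\ref{fin-quotient-lem-2} to $(F,d)$ produces a telescope $(E,d)$ of $(F,d)$ satisfying property~(2); since telescoping merely passes to a subset of the vertices with $d$ restricted, property~(1) persists, so $(E,d)$ satisfies both conditions.

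For $(\Leftarrow)$: suppose $(E,d)$ is a Bratteli diagram for $A$ satisfying (1) and (2) --- in fact only (2) will be used --- and suppose for contradiction that $A$ has a nonzero finite-dimensional quotient $A/I$. I would invoke the standard dictionary between ideals of an AF-algebra and its Bratteli diagram: writing $A = \varinjlim(A_n,\phi_n)$ with $A_n = \bigoplus_{v\in V_n} M_{d_v}(\C)$ and $\phi_n$ having multiplicity matrix $(|vE^1w|)_{v\in V_n,\, w\in V_{n+1}}$, the ideal $I$ is the closure of $\bigcup_n (I\cap A_n)$, where $I\cap A_n = \bigoplus_{v\in S_n} M_{d_v}(\C)$ for subsets $S_n\subseteq V_n$ with (a) $v\in S_n$ and $|vE^1w|\ge 1$ implying $w\in S_{n+1}$, and (b) every $v\in V_n\setminus S_n$ having $|vE^1w|\ge 1$ for some $w\in V_{n+1}\setminus S_{n+1}$. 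Then $A/I = \varinjlim(A_n/I_n,\bar\phi_n)$ with $A_n/I_n = \bigoplus_{v\in V_n\setminus S_n} M_{d_v}(\C)$ and $\bar\phi_n$ carrying multiplicity matrix $(|vE^1w|)_{v\in V_n\setminus S_n,\, w\in V_{n+1}\setminus S_{n+1}}$; and by~(a), any $u$ with $|uE^1w|\ge 1$ for some $w\notin S_{n+1}$ automatically lies in $V_n\setminus S_n$. Since each $\bar\phi_n$ is injective (as $\phi_n^{-1}(I_{n+1}) = I_n$) and $A/I$ is finite-dimensional and nonzero, the dimensions $\dim(A_n/I_n)$ are nondecreasing, bounded by $\dim(A/I)$, and eventually constant, so there is $N$ with $A_n/I_n \ne 0$ and $\bar\phi_n$ an isomorphism for all $n\ge N$. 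Fix such an $n$ and a vertex $v\in V_{n+1}\setminus S_{n+1}$. An isomorphism of finite-dimensional $C^*$-algebras has a permutation matrix with matching block sizes as its multiplicity matrix, so there is a unique $u\in V_n\setminus S_n$ with $|uE^1v| = 1$ and $d_u = d_v$, while $|wE^1v| = 0$ for all other $w\in V_n\setminus S_n$; using the remark above, no $w\in V_n$ at all has $|wE^1v|\ge 2$. Property~(2) applied to $v$ then forces $d_v > \sum_{e\in E^1v} d_{s(e)}$; but the edge from $u$ contributes $d_u = d_v$ to that sum, so $\sum_{e\in E^1v} d_{s(e)} \ge d_v$, a contradiction. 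Hence $A$ has no nonzero finite-dimensional quotient.

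The main obstacle is the reverse direction, and within it the bookkeeping around the ideal--diagram correspondence --- especially condition~(b) and the observation that predecessors of surviving vertices survive --- rather than the final numerical step, which is short. If one prefers to sidestep condition~(b), an equivalent route is to first replace $A/I$ by a minimal nonzero quotient, which is a single matrix algebra $M_k(\C)$; its Bratteli diagram telescopes to a single repeated vertex labelled $k$, and the same final argument applies to that column.
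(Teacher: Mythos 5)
Your forward direction is exactly the paper's: apply Lemma~\ref{fin-quotient-lem-1} to get property~(1), then Lemma~\ref{fin-quotient-lem-2} to telescope to property~(2), noting that telescoping preserves~(1). Your reverse direction is correct but takes a genuinely different route. The paper argues directly that every proper quotient is infinite-dimensional: the quotient corresponds to the complement of a saturated hereditary set, every vertex of that complement starts an infinite path inside it, and property~(2) (together with the Bratteli inequality $d_v \ge \sum_{e \in E^1 v} d_{s(e)}$) forces $d$ to increase strictly along any such path, so $d$ is unbounded on the quotient diagram. You instead assume a nonzero finite-dimensional quotient, push the ideal through the approximating subalgebras, use that the injective connecting maps $\bar\phi_n$ must eventually be isomorphisms with permutation multiplicity matrices, and contradict property~(2) at a surviving vertex $v$ via $d_v > \sum_{e\in E^1 v} d_{s(e)} = d_u = d_v$. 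Both arguments are sound; the paper's is shorter and yields the stronger conclusion that \emph{every} proper quotient is infinite-dimensional without any stabilization analysis, while yours makes the mechanism more explicit (finite-dimensionality forces the diagram to eventually consist of single edges between equal labels, which is precisely what~(2) forbids) at the cost of more bookkeeping in the ideal--diagram correspondence. Your observation that heredity alone (your condition~(a)) suffices for the final step, and that condition~(b) can be sidestepped by passing to a minimal quotient, is accurate.
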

\begin{proof}
If $A$ has no nonzero finite-dimensional quotients, then by
Lemma~\ref{fin-quotient-lem-1} there exists a Bratteli diagram
for $A$ satisfying~(1). Lemma~\ref{fin-quotient-lem-2} shows
that this Bratteli diagram may be telescoped to obtain a
Bratteli diagram for $A$ satisfying~(2). The vertices of the
telescoped diagram are a subset of those of the original
diagram, and the values of $d_v$ are the same for those
vertices $v$ common to both. In particular, telescoping
preserves property~(1), so the telescoped Bratteli diagram will
then satisfy both (1)~and~(2).

Conversely, suppose that there exists a Bratteli diagram
$(E,d)$ for $A$ satisfying (1) and (2). If $I$ is a proper
ideal of $A$, then $I$ corresponds to a saturated hereditary
subset $H$, and the complement $(E\setminus H, d)$ of $H$ in
$(E,d)$ is a Bratteli diagram for $A/I$. Fix a vertex $v$ in
this complement.  Since $H$ is saturated hereditary, there
exists an edge $e_1 \in E^1$ with $s(e_1) = v$ and $r(e_1)$ in the
complement also. Inductively, we may produce an infinite path
$e_1 e_2 \ldots$ in the complement.  It follows from
property~(2) that $d_{s(e_i)} < d_{s(e_{i+1})}$ for all $i$,
which implies that the function $d \colon (E\setminus H)^0 \to \N$
is unbounded. Hence $A/I$ is infinite dimensional.
\end{proof}

\begin{lemma} \label{unital-quotient-lem}
Suppose $A$ is an AF-algebra with no unital quotients.  Then
any Bratteli diagram for $A$ can be telescoped to obtain a
second Bratteli diagram $(E,d)$ for $A$ such that for all $v
\in E^0$ we have $d_v > \sum_{e \in E^1v}d_{s(e)}$.
\end{lemma}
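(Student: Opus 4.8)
The plan is to follow the template of the proof of Lemma~\ref{fin-quotient-lem-2}, strengthening what is extracted in the final step. Fix a Bratteli diagram $(F,d)$ for $A$ with levels $F^0 = \bigsqcup_{n\ge 1} W_n$. As in Lemma~\ref{fin-quotient-lem-2}, it suffices to prove the claim: \emph{for every $m$ there exists $n > m$ such that $d_v > \sum_{\alpha \in W_m F^* v} d_{s(\alpha)}$ for every $v \in W_n$.} Granting this, I would build inductively an increasing sequence $1 = n_1 < n_2 < \cdots$ with $d_v > \sum_{\alpha \in W_{n_k} F^* v} d_{s(\alpha)}$ for all $v \in W_{n_{k+1}}$, and telescope $(F,d)$ along $\{W_{n_k}\}_k$. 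In the telescoped diagram $(E,d)$ the edges into a vertex $v$ at level $k+1$ are precisely the elements of $W_{n_k} F^* v$, so $\sum_{e \in E^1 v} d_{s(e)} = \sum_{\alpha \in W_{n_k} F^* v} d_{s(\alpha)} < d_v$, while at level $1$ the inequality $d_v > 0 = \sum_{e \in E^1 v} d_{s(e)}$ is automatic.

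To prove the claim I would argue by contradiction. Suppose it fails for some $m$; telescoping $(F,d)$ to $\bigsqcup_{n \ge m} W_n$ we may assume $m = 1$, so that $Y_n := \{v \in W_n : d_v = \sum_{\alpha \in W_1 F^* v} d_{s(\alpha)}\}$ is nonempty for every $n$ (note $Y_1 = W_1$, and the defining sum is always $\le d_v$ because $(F,d)$ is a Bratteli diagram). First I would record the downward-propagation property, proved exactly as for the sets $X_n$ in Lemma~\ref{fin-quotient-lem-2}: if $n \le p$, $x \in Y_p$ and $v \in W_n$ with $v \ge x$, then the estimate
\[ d_x = \sum_{\beta \in W_n F^* x}\Big(\sum_{\gamma \in W_1 F^* s(\beta)} d_{s(\gamma)}\Big) \le \sum_{\beta \in W_n F^* x} d_{s(\beta)} \le d_x \]
forces equality throughout, so $d_{s(\beta)} = \sum_{\gamma \in W_1 F^* s(\beta)} d_{s(\gamma)}$ for each $\beta \in W_n F^* x$, and in particular $v \in Y_n$. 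Since each $v \in Y_n$ has $d_v > 0$, whence $W_1 F^* v \ne \emptyset$, a pigeonhole argument identical to that of Lemma~\ref{fin-quotient-lem-2} (using that each $W_n$ is finite and that $F$ is row-finite) then produces an infinite path $\lambda = \lambda_1 \lambda_2 \cdots$ in $F$ with $s(\lambda_n) \in Y_n$ for all $n$.

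To finish, set $H := \{y \in F^0 : y \not\ge s(\lambda_n) \text{ for all } n\}$. As in Lemma~\ref{fin-quotient-lem-2} this is a saturated hereditary subset with $s(\lambda_1) \notin H$, so $(F \setminus H, d)$ is a Bratteli diagram for the (nonzero) quotient of $A$ by the ideal corresponding to $H$. I claim this diagram is unital. Fix a vertex $v$ of $F \setminus H$ lying in a level $W_n$ with $n \ge 2$; then $v \ge s(\lambda_p)$ for some $p \ge n$, so every $e \in F^1 v$ has $s(e) \ge v \ge s(\lambda_p)$ and hence $s(e) \notin H$, which shows that $v$ has the same incoming edges in $F \setminus H$ as in $F$. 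Moreover downward propagation from $s(\lambda_p)$ gives $v \in Y_n$ and $s(e) \in Y_{n-1}$ for each such $e$, so splitting the paths in $W_1 F^* v$ at their last edge yields
\[ d_v = \sum_{e \in F^1 v}\Big(\sum_{\gamma \in W_1 F^* s(e)} d_{s(\gamma)}\Big) = \sum_{e \in F^1 v} d_{s(e)}. \]
Thus every connecting homomorphism of $F \setminus H$ is unital, so the associated quotient of $A$ is a nonzero \emph{unital} quotient, contradicting the hypothesis. This proves the claim, and with it the lemma. I expect the last paragraph to be the delicate point: one must be certain that the Bratteli diagram of the quotient is exactly the full subgraph $F \setminus H$ and that \emph{no} incoming edge of a surviving vertex is discarded, so that the equality $d_v = \sum_{e \in F^1 v} d_{s(e)}$ coming from $Y$-membership genuinely says the quotient's connecting maps are unital; the bookkeeping around telescoping and levels also needs care, but it follows the pattern already set in Lemma~\ref{fin-quotient-lem-2}.
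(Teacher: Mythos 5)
Your proof is correct and follows essentially the same route as the paper: the same reduction to the claim about levels $m < n$, the same sets $Y_n$ with the same downward-propagation argument, and the same contradiction via a unital quotient coming from a saturated hereditary set built from the $Y_n$. The only cosmetic difference is that you extract an infinite path $\lambda$ through the $Y_n$ and quotient by $\{y : y \not\ge s(\lambda_n)\ \text{for all } n\}$, whereas the paper quotients by the complement of $T = \{w : w \ge x \text{ for some } x \in Y_n, \text{ for infinitely many } n\}$ directly; both yield a Bratteli diagram all of whose connecting maps are unital.
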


\begin{proof}
Let $(F,d)$ be a Bratteli diagram for $A$ with $F^0$
partitioned into levels as $F^0 = \bigsqcup_{n=1}^\infty W_n$.
It suffices to show that for every $m$ there exists $n \ge m$
such that for every $v \in W_n$ we have $d_v > \sum_{\alpha
\in W_m F^* v} d_{s(\alpha)}$. Suppose not, and seek a
contradiction.
That is, we suppose that there exists $m$
such that for every $n \ge m$ the set
\[
Y_n := \Big\{ x \in W_n : d_x = \sum_{\alpha \in W_m F^*x} d_{s(\alpha)} \Big\}
\]
is nonempty.
By telescoping $(F,d)$ to $\bigsqcup_{n=m}^\infty W_n$
we may assume $m=1$.
If we let
\[
T := \{ w \in F^0 : \text{for infinitely many $n$
there exists $x \in Y_{n}$ with $w \geq x$} \},
\]
then the complement of $T$ is a saturated hereditary subset,
and the quotient of $A$ by the ideal corresponding to this
complement has a Bratteli diagram obtained by restricting to
the vertices in $T$. Along similar lines to
Lemma~\ref{fin-quotient-lem-2}, one can show that if $n \leq
p$, $x \in Y_p$, and $v \in W_n$ with $v \geq x$, then $v \in
Y_n$. Hence each $v \in T \cap W_n$ is in $Y_n$. This implies
that each $v \in T$ has the property that $d_v = \sum_{e \in
F^1v} d_{s(e)}$, and hence all the inclusions in the
corresponding directed system are unital. Thus the quotient of
$A$ considered above is unital. This contradicts the hypothesis
that $A$ has no unital quotients.
\end{proof}

\begin{lemma} \label{unital-quotient-char-lem}
Let $A$ be an AF-algebra. Then $A$ has no unital quotients if
and only if $A$ has a Bratteli diagram $(E,d)$ such that for
all $v \in E^0$ we have both $d_v \geq 2$ and $d_v > \sum_{e
\in E^1v} d_{s(e)}$.
\end{lemma}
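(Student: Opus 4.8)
The plan is to obtain the forward implication essentially for free from two earlier lemmas, and to prove the converse by examining the Bratteli diagrams of quotients of $A$, along the lines of --- but even more easily than --- the converse half of Lemma~\ref{f-d-quotient-char-lem}. For the forward direction, suppose $A$ has no unital quotients. Then in particular $A$ has no quotient isomorphic to $\C$ (since $\C$ is unital), so Lemma~\ref{fin-quotient-lem-1} produces a Bratteli diagram for $A$ in which $d_v \geq 2$ for every vertex $v$. Applying Lemma~\ref{unital-quotient-lem} to this diagram, we may telescope it to a Bratteli diagram $(E,d)$ for $A$ with $d_v > \sum_{e \in E^1 v} d_{s(e)}$ for all $v$. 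Since telescoping only deletes vertices and leaves the labels of the surviving vertices unchanged, the condition $d_v \geq 2$ is preserved, so $(E,d)$ has both required properties.

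For the converse, suppose $(E,d)$ is a Bratteli diagram for $A$ satisfying $d_v \geq 2$ and $d_v > \sum_{e \in E^1 v} d_{s(e)}$ for all $v$; in fact only the second inequality will be used. Let $I$ be a proper ideal of $A$ (the case $I = A$ is trivial, and taking $I = 0$ will also show that $A$ itself is non-unital). As recalled in the proof of Lemma~\ref{f-d-quotient-char-lem}, $I$ corresponds to a saturated hereditary subset $H \subseteq E^0$, and $(E \setminus H, d)$ is a Bratteli diagram for $A/I$. The key point is that, $H$ being hereditary, we have $(E \setminus H)^1 v = E^1 v$ for every vertex $v \notin H$: an edge ending outside $H$ must start outside $H$. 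Consequently, for every vertex $v$ of $E \setminus H$ we have $\sum_{e \in (E\setminus H)^1 v} d_{s(e)} = \sum_{e \in E^1 v} d_{s(e)} < d_v$, which says precisely that each connecting inclusion $B_n \hookrightarrow B_{n+1}$ in the AF-system associated to $(E \setminus H, d)$ is non-unital.

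It then remains to observe that an AF-algebra $\overline{\bigcup_n B_n}$ all of whose connecting inclusions are non-unital cannot be unital: the units $1_{B_n}$ form a strictly increasing sequence of projections, so if the limit had a unit $1$ we could choose $n$ and $a \in B_n$ with $\|1 - a\| < 1$, and then, using $1_{B_n} a = a$, the identity $(1 - 1_{B_n})(1 - a) = 1 - 1_{B_n}$ would force $\|1 - 1_{B_n}\| = 0$, hence $1_{B_n} = 1$ and $1_{B_m} = 1$ for all $m \geq n$, contradicting strict monotonicity. Therefore $A/I$ is non-unital for every proper ideal $I$, so $A$ has no unital quotients. The only even mildly subtle ingredient is this last observation about non-unital limits; everything else is routine bookkeeping with the earlier lemmas and with the correspondence between ideals and saturated hereditary subsets, so I anticipate no serious obstacle.
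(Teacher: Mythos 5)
Your proposal is correct and follows essentially the same route as the paper: the forward direction is obtained by combining the $d_v\ge 2$ reduction (Lemma~\ref{fin-quotient-lem-1}, which is also the source of that condition in Lemma~\ref{f-d-quotient-char-lem}) with the telescoping of Lemma~\ref{unital-quotient-lem}, noting that telescoping preserves the labels of surviving vertices; and the converse passes to the subdiagram of a quotient, observes that the strict inequality $d_v > \sum_{e\in E^1v} d_{s(e)}$ persists there, and concludes that all connecting inclusions, hence the quotient, are nonunital. The only difference is that you spell out two details the paper leaves implicit (that hereditarity gives $(E\setminus H)^1v = E^1v$, and that a direct limit with strictly nonunital connecting maps is nonunital), both of which are handled correctly.
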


\begin{proof}
If $A$ has no unital quotients, then the existence of such a
Bratteli diagram follows from Lemma~\ref{f-d-quotient-char-lem}
and Lemma~\ref{unital-quotient-lem}. Conversely, suppose that
$A$ has such a Bratteli diagram $(E,d)$, and fix a nonzero
quotient $A/I$ of $A$. There is a subdiagram $(F,d)$ of $(E,d)$
which is a Bratteli diagram for $A/I$. In particular $d_v >
\sum_{e \in F^1 v} d_{s(e)}$ for all $v \in F^0$. It follows
that the inclusions in the direct limit decomposition of $A$
corresponding to $(F,d)$ are all nonunital. Hence $A/I$ is
nonunital.
\end{proof}

\begin{lemma} \label{Evil-Lemma-:-(}
Let $A$ be a $C^*$-algebra which is generated by
finite-dimensional subalgebras $B$ and $C$.
Suppose that $B = \bigoplus_{v\in V} B^v$
where each $B^v \cong M_{b_v}(\C)$ and
that $C = \bigoplus_{w\in W} C^w$ where each $C^w \cong M_{c_w}(\C)$.
For each $v\in V$ suppose that $q^v$ is a minimal projection
in $B^v$ such that $q^v \in C$ and $(1_{B^v} - q^v) C = \{0\}$.
For each $v,w$,
let $m_{v,w}$ denote the rank of $q^v 1_{C^w}$ in $C^w$,
and let
\[
a_w := c_w + \sum_{v\in V}(b_v-1) m_{v,w}.
\]
Then $A = \bigoplus_{w\in W} A^w$ where each $A^w \cong
M_{a_w}(\C)$. Moreover, the inclusion $C^w \hookrightarrow A^w$
has multiplicity $1$ for $w\in W$, and the inclusion $B
\hookrightarrow A$ has multiplicity matrix
$(m_{v,w})_{v\in V,w\in W}$. Finally, the unit $1_A$ of
$A$ is equal to $\big(1_B - \sum_{v\in V}q^v\big) + 1_C$.
\end{lemma}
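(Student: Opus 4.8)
The plan is to analyze the structure of $A$ by decomposing each matrix algebra $B^v$ into its minimal projections and tracking how the $C$-algebra ``fills in'' around the chosen projections $q^v$. The key observation is this: for each $v \in V$, pick a system of matrix units $\{e^v_{kl}\}_{1 \le k,l \le b_v}$ for $B^v$ with $e^v_{11} = q^v$. Then $q^v = e^v_{11} \in C$, and $C^w q^v = q^v 1_{C^w}$ has some rank $m_{v,w}$ in $C^w$ for each $w$. First I would argue that $A$ is generated by $C$ together with the partial isometries $e^v_{k1}$ for $v \in V$, $2 \le k \le b_v$; this is immediate since $e^v_{kl} = e^v_{k1} q^v (e^v_{l1})^*$ and $q^v \in C$. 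Next, I would use the hypothesis $(1_{B^v} - q^v)C = \{0\}$, which says $e^v_{kl} c = 0$ whenever $c \in C$ and $k \ne 1$ (here using $e^v_{kl} = e^v_{k1}e^v_{1l}$ and $e^v_{1l}(1_{B^v}) = e^v_{1l}$, so $e^v_{1l} = e^v_{1l}q^v + e^v_{1l}(1_{B^v}-q^v)$...); more carefully, $e^v_{k1} = e^v_{k1} q^v$ since $q^v = e^v_{11}$ and $e^v_{k1}e^v_{11} = e^v_{k1}$, so the left initial space of $e^v_{k1}$ sits inside $q^v$, and the hypothesis controls the interaction on the $C$ side.

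The heart of the argument is to build, for each $w \in W$, an explicit isomorphism $A^w \cong M_{a_w}(\C)$. Fix $w$. Inside $C^w \cong M_{c_w}(\C)$, the projection $q^v 1_{C^w}$ has rank $m_{v,w}$; I would choose matrix units for $C^w$ adapted so that $q^v 1_{C^w}$ becomes a sum of $m_{v,w}$ diagonal minimal projections of $C^w$, doing this compatibly as $v$ ranges over $V$ (possible because the $q^v$ are mutually orthogonal — they live in different summands $B^v$ of $B$ — so the projections $q^v 1_{C^w}$ are mutually orthogonal in $C^w$, whence $\sum_v m_{v,w} \le c_w$). Now $A^w := 1_{C^w} A 1_{C^w}$ (appropriately interpreted, since $1_{C^w}$ need not be central in $A$ a priori — part of the claim is that it is) is generated by $C^w$ and the partial isometries $e^v_{k1} 1_{C^w}$ for the relevant $v, k$. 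Conjugating by $e^v_{k1}$ produces $b_v - 1$ fresh ``copies'' of the $m_{v,w}$-dimensional space $q^v 1_{C^w} H$, mutually orthogonal across different $k$ and different $v$ and orthogonal to the original $C^w$-space by the vanishing hypothesis; stacking these gives exactly a $c_w + \sum_v (b_v-1)m_{v,w} = a_w$-dimensional space on which the generators act as all the matrix units of $M_{a_w}(\C)$. This simultaneously shows $A^w \cong M_{a_w}(\C)$, that the $A^w$ are mutually orthogonal with $A = \bigoplus_w A^w$, that $C^w \hookrightarrow A^w$ has multiplicity one (the original $C^w$-block is a single corner), and that $B^v \hookrightarrow A^w$ has multiplicity $m_{v,w}$ (each of the $b_v$ rows of $B^v$ contributes an $m_{v,w}$-dimensional ``diagonal block'' in $A^w$). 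Finally, the unit: $\sum_w 1_{A^w}$ must be $1_A$, and since $1_{A^w} = 1_{C^w} + (\text{the } e^v_{k1}\text{-conjugated pieces})$ and summing the conjugated pieces over $k$ and $w$ rebuilds $1_{B^v} - q^v$, we get $1_A = 1_C + \sum_v (1_{B^v} - q^v) = (1_B - \sum_v q^v) + 1_C$.

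The main obstacle I anticipate is bookkeeping: one must choose the matrix unit systems for $B^v$ and for each $C^w$ simultaneously and compatibly, verify that the various partial isometries $e^v_{k1}1_{C^w}$ indeed have mutually orthogonal and correctly-placed initial and final projections, and check that no further relations collapse the dimension count. The cleanest way to organize this is probably to fix an abstract Hilbert space $\cH$ on which $A$ acts faithfully (or just work in $A^{**}$), decompose $1_A \cH$ under the commuting-enough families of projections, and identify $A^w$ with the compacts on the appropriate finite-dimensional summand; the vanishing condition $(1_{B^v}-q^v)C = \{0\}$ is exactly what prevents the $e^v_{k1}$-images from overlapping the $C$-part, so that the dimensions genuinely add rather than telescope. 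Once the decomposition of $1_A\cH$ into $a_w$-dimensional pieces is in place, all four assertions of the lemma read off directly.
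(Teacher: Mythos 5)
Your proposal follows essentially the same route as the paper: choose matrix units for each $B^v$ with $q^v$ as the $(1,1)$ corner, choose matrix units for each $C^w$ diagonalizing the mutually orthogonal projections $q^v 1_{C^w}$, and then form products of the two families to produce a system of $a_w \times a_w$ matrix units for $A^w$, using $(1_{B^v}-q^v)C=\{0\}$ to get the orthogonality that makes the dimensions add. One small correction: your provisional definition $A^w := 1_{C^w} A 1_{C^w}$ is not right even after the fact --- $1_{C^w}$ is not central in $A$ and is only a rank-$c_w$ (hence proper, when $a_w > c_w$) subprojection of the central unit $1_{A^w}$, so that compression is just the multiplicity-one corner $\cong M_{c_w}(\C)$; the correct definition, which the rest of your sketch actually uses, is the $C^*$-algebra generated by $C^w$ together with the partial isometries $e^v_{k1} 1_{C^w}$.
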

\begin{proof}
The assumptions on the $q^v$ imply that $\big(1_B - \sum_{v\in
V}q^v\big) + 1_C$ is the unit of $A$. To obtain the desired
decomposition of $A$, we construct a family of matrix units for
$A$. We begin by fixing convenient systems of matrix units for
the $B^v$ and the $C^w$.

For $v \in V$, let $\{\beta^v_{r,s} : 0 \le r,s \le b_v-1\}$
be a family of matrix units for $B^v$ such that
$\beta^v_{0,0} = q^v$.
Similarly, for $w \in W$
let $\{\gamma^w_{k,l} : 0 \le k,l \le c_w-1\}$
be a family of matrix units for $C^w$
such that for each $v \in V$
there exists a subset $\kappa_{v,w} \subset \{0,1,\ldots,c_w-1\}$
satisfying $q^v 1_{C^w} = \sum_{k \in \kappa_{v,w}}\gamma^w_{k,k}$.
Note that the subsets $\{\kappa_{v,w}\}_{v \in V}$
of $\{0,1,\ldots,c_w-1\}$ are mutually disjoint
and satisfy $|\kappa_{v,w}|=m_{v,w}$.

We are now ready to define the desired matrix units
for $A$;
these matrix units will be indexed by the set
\[
I_w := \big(\{0,1,\ldots,c_w-1\} \times \{0\}\big) \sqcup
\bigsqcup_{v\in V} \big(\kappa_{v,w} \times \{1,2, \ldots, b_v-1\}\big)
\]
for $w\in W$. We have $|I_w| = c_w + \sum_{v\in V}
|\kappa_{v,w}| (b_v-1) = a_w$. Define elements
$\{\alpha^w_{(k,r),(l,s)} : w \in W,\ (k,r),(l,s) \in I_w\}$ by
\[
\alpha^w_{(k,r),(l,s)} := \begin{cases}
\gamma^w_{k,l} &\text{ if $r=s=0$}\\
\gamma^w_{k,l} \beta^v_{0,s}
&\text{ if $r=0$, $l \in \kappa_{v,w}$ and $s\geq 1$} \\
\beta^{v'}_{r,0} \gamma^w_{k,l}
&\text{ if $k \in \kappa_{v',w}$, $r\geq 1$ and $s=0$} \\
\beta^{v'}_{r,0} \gamma^w_{k,l} \beta^v_{0,s}
&\text{ if $k \in \kappa_{v',w}$, $r\geq 1$,
$l \in \kappa_{v,w}$ and $s\geq 1$}.
\end{cases}
\]
We first claim that for each $w,w' \in W$, each $(k,r),(l,s) \in
I_w$ and each $(k',r'),(l',s') \in I_{w'}$,
\begin{equation}\label{eq:alpha matuits}
\alpha^w_{(k,r),(l,s)} \alpha^{w'}_{(k',r'),(l',s')} = \begin{cases}
\alpha^w_{(k,r),(l',s')} &\text{ if $w = w'$ and $(l,s) = (k',r')$}\\
0 &\text{ otherwise.}
\end{cases}
\end{equation}
To verify \eqref{eq:alpha matuits}, we consider four cases.

\noindent\textsc{Case~1:} $s=r'=0$. Since $\gamma^w_{k,l}$ are
matrix units and since the $C^w$ are orthogonal, we have
\[
\gamma^w_{k,l}\gamma^{w'}_{k',l'}=\begin{cases}
\gamma^w_{k,l'} &\text{ if $w = w'$ and $l = k'$}\\
0 &\text{ otherwise.}
\end{cases}
\]
This implies \eqref{eq:alpha matuits} in the case $s=r'=0$.

\noindent\textsc{Case~2:}
$s \ge 1$ and $r'=0$.
Then
$\beta^v_{0,s} \gamma^{w'}_{k',l'} =
\beta^v_{0,s} \beta^v_{s,s}\gamma^{w'}_{k',l'} = 0$
because $\beta^v_{s,s} \le \sum_{s=1}^{b_v-1}\beta^v_{s,s} = 1_{B^v} - q^v$
which is orthogonal to $C$ by assumption.
This shows $\alpha^w_{(k,r),(l,s)} \alpha^{w'}_{(k',r'),(l',s')} = 0$.

\noindent\textsc{Case~3:} $s = 0$ and $r' \ge 1$.
This case follows from Case~2 by taking adjoints.

\noindent\textsc{Case~4:}
$s \ge 1$ and $r' \ge 1$.
Then
\[
\gamma^w_{k,l} \beta^v_{0,s}
\beta^{v'}_{r',0} \gamma^{w'}_{k',l'}
= \begin{cases}
  \gamma^w_{k,l} \beta^v_{0,0} \gamma^{w'}_{k',l'}
    &\text{ if $v = v'$ and $s = r'$}\\
  0 &\text{ otherwise.}
\end{cases}
\]
Since $\gamma^w_{k,l}\beta^v_{0,0} =\gamma^w_{k,l}$
we have
\[
\gamma^w_{k,l} \beta^v_{0,0} \gamma^{w'}_{k',l'}
=\gamma^w_{k,l} \gamma^{w'}_{k',l'}
= \begin{cases}
  \gamma^w_{k,l'} &\text{ if $w = w'$ and $l = k'$}\\
  0 &\text{ otherwise.}
\end{cases}
\]
These show~\eqref{eq:alpha matuits} in case~4, completing the
proof of the claim.

For each $w \in W$, let $A^w:=\operatorname{span}
\{\alpha^w_{(k,r),(l,s)} : (k,r),(l,s) \in I_w\}\subset A$.
From \eqref{eq:alpha matuits},
we see that $A^w$ is isomorphic to $M_{a_w}(\C)$ for each $w \in W$,
and that $\{A^w\}_{w\in W}$ are orthogonal to each other.
We next show that $A = \sum_{w \in W}A^w$.
To see this, it suffices to show that
all the matrix units $\beta^v_{r,s}$ and $\gamma^w_{k,l}$
for $B$ and $C$ belong to $\sum_{w \in W}A^w$.
If $l \in \kappa_{v,w}$, then
\[
\gamma^w_{k,l}\beta^v_{0,0}
=(\gamma^w_{k,l}1_{C^{w}})q^v
=\gamma^w_{k,l}\Big(\sum_{l' \in \kappa_{v,w}}\gamma^{w}_{l',l'}\Big)
=\gamma^w_{k,l}.
\]
Similarly, we get
$\beta^{v'}_{0,0}\gamma^w_{k,l}=\gamma^w_{k,l}$ if $k \in
\kappa_{v',w}$. We may deduce from these two equalities that
$\alpha^w_{(k,r),(l,s)}=
\beta^{v'}_{r,0}\gamma^w_{k,l}\beta^v_{0,s}$ for all $k \in
\kappa_{v',w}$, all $r\geq 0$, all $l \in \kappa_{v,w}$ and all
$s\geq 0$. For each $v \in V$, we have
\[
\beta^v_{0,0} = q^v = \sum_{w \in W}q^v 1_{C^w}
= \sum_{w \in W}\sum_{k \in \kappa_{v,w}}\gamma^w_{k,k}.
\]
It follows that
\[
\beta^v_{r,s} = \beta^v_{r,0}\beta^v_{0,0}\beta^v_{0,s}
= \sum_{w \in W}\sum_{k \in \kappa_{v,w}}
\beta^v_{r,0}\gamma^w_{k,k}\beta^v_{0,s},
= \sum_{w \in W}\sum_{k \in \kappa_{v,w}}
\alpha^w_{(k,r),(k,s)} \in \sum_{w \in W}A^w
\]
for all $v \in V$ and all $0 \le r,s \le b_v-1$. We also have
$\gamma^w_{k,l} = \alpha^w_{(k,0),(l,0)}$ for $w \in W$ and $0
\le k,l \le c_w-1$.
Thus we get $A=\sum_{w \in W}A^w$.

It is clear that
the inclusion $C^w \hookrightarrow A^w$
has multiplicity $1$ for $w\in W$.
To see that the inclusion $B \hookrightarrow A$
has multiplicity matrix $(m_{v,w})_{v\in V,w\in W}$,
it suffices to see that for each $v \in V$ and $w \in W$,
the product of the minimal projection $q^v \in B^v$
and the unit $1_{A^w}$ of $A^w$
has rank $m_{v,w}$ in $A^w \cong M_{a_w}(\C)$.
Since $q^v \in C$, we have
\[
q^v1_{A^w}=q^v1_{C^w}
= \sum_{k \in \kappa_{v,w}}\gamma^w_{k,k}
= \sum_{k \in \kappa_{v,w}}\alpha^w_{(k,0),(k,0)}.
\]
This shows that the rank of $q^v1_{A^w} \in A^w$
is $|\kappa_{v,w}| = m_{v,w}$.
\end{proof}

\section{Realizations of AF-algebras} \label{results-sec}

\subsection{A construction of an ultragraph from a certain type of Bratteli diagram}  \label{ultra-contruct-subsec}

In this section we show how to construct ultragraphs from
certain Bratteli diagrams and use these ultragraphs to realize
particular classes of AF-algebras as ultragraph $C^*$-algebras,
Exel-Laca algebras, and graph $C^*$-algebras.

\begin{definition} \label{ultragraph-def}
Let $A$ be an AF-algebra with no nonzero finite-dimensional
quotients.  By Lemma~\ref{f-d-quotient-char-lem} there exists a
Bratteli diagram $(E,d)$ for $A$ satisfying the following two
properties:
\begin{itemize}
\item[(1)] $d_v \geq 2$ for all $v \in E^0$; and
\item[(2)] for all $n \in \N$ and for each $v \in V_{n+1}$
   either $d_v > \sum_{e \in E^1 v} d_{s(e)}$ or there
   exists $w \in V_n$ with $|w E^1v| \geq 2$.
\end{itemize}
We define
\[
\Delta_v := d_v - \sum_{e \in E^1v} (d_{s(e)} - 1).
\]
The symbol $\Delta$ has been chosen to connote ``difference".
Note that from the property (1), $\Delta_v = d_v$ if and only
if $v$ is a source.  In addition, it follows from the
properties of our Bratteli diagram that $\Delta_v \geq 2$ for
all $v \in E^0$.

We claim that for each $v \in E^0$ there exists an injection
$k_v\colon E^1 v \to \{0,1,\ldots, \Delta_v-1\}$ such that
there exists $e \in E^1 v$ with $k_v(e) = 0$ if and only if
$d_v = \sum_{e \in E^1 v}d_{s(e)}$, and in this case $e$ is not
the only element of $s(e)E^1v$. To justify this claim, first
observe that
\begin{align*}
\Delta_v &= d_v - \sum_{e \in E^1 v} (d_{s(e)} - 1) = d_v - \sum_{e \in E^1 v} d_{s(e)} + \sum_{e \in E^1 v} 1 = \big( d_v - \sum_{e \in E^1 v} d_{s(e)} \big) + |E^1v|
\end{align*}
Hence if $d_v  >  \sum_{e \in E^1 v} d_{s(e)}$ we may always
choose an injection $k_v\colon E^1v \to \{0,1,\ldots,
\Delta_v-1\}$ so that its image does not contain $0$. On the
other hand if $d_v = \sum_{e \in E^1 v} d_{s(e)}$, then by
hypothesis on the Bratteli diagram there exists $w \in E^0$
with $|wE^1v| \geq 2$ so we may choose a bijection $k_v\colon
E^1v \to \{0,1,\ldots, \Delta_v-1\}$ such that $e \in E^1v$
with $k_v(e) = 0$ satisfies $s(e)=w$. This establishes the
claim.

We now define an ultragraph $\G = (G^0, \G^1, r_\G, s_\G)$ by
\[
G^0 := \{ v_i : v \in E^0 \text{ and } 1 \leq i \leq \Delta_v -1 \}
\qquad \text{ and } \qquad \G^1 := \{ e_{v_i} : v_i \in G^0
\}
\]
with
\[
s_\G (e_{v_i}) := v_i \quad \text{ for all $v_i \in G^0$},
\qquad  \qquad r_\G (e_{v_i}) := \{v_{i-1}\} \quad
\text{ for $2 \leq i \leq \Delta_v-1$}
\]
and
\begin{align*}
r_\G(e_{v_1}) := \big\{ w_k :
\text{ there exists a path $\lambda=\lambda_1\lambda_2\ldots \lambda_n$
such that $s(\lambda)=v$, $r(\lambda)=w$,}& \\
\text{$k_{r(\lambda_i)}(\lambda_i)=0$ for $i=1,2,\ldots, n-1$, and $k_{w}(\lambda_n)=k\ge 1$}&\big\}.
\end{align*}
\end{definition}

To check that $\G$ is an ultragraph, we only need to see that
$r_\G(e_{v_1}) \neq \emptyset$.

\begin{lemma} \label{lem:r_G(e_{v_1})}
For all $n$ and $v \in V_n$,
the set $r_\G(e_{v_1})$ is nonempty and satisfies
\[
r_\G(e_{v_1})
= \{ w_{k_w(e)} : w \in V_{n+1}, e \in vE^1 w, k_w(e)\ge 1\}
\cup \bigcup_{w \in V_{n+1}, e \in vE^1 w, k_w(e)=0} r_\G(e_{w_1}).
\]
\end{lemma}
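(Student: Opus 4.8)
The plan is to obtain the displayed identity by directly unwinding the definition of $r_\G(e_{v_1})$ given in Definition~\ref{ultragraph-def}, sorting the witnessing paths according to whether they have length one or length at least two, and then to deduce nonemptiness from the identity together with the special property of the injections $k_v$ recorded in that definition.

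First I would prove the two inclusions. Fix $v \in V_n$; since $(E,d)$ is a Bratteli diagram, every edge emitted by $v$ has range in $V_{n+1}$. For ``$\supseteq$'': if $e \in vE^1w$ with $k_w(e) \ge 1$, then the length-one path $e$ witnesses $w_{k_w(e)} \in r_\G(e_{v_1})$; and if $e \in vE^1w$ with $k_w(e) = 0$ and $y \in r_\G(e_{w_1})$ is witnessed by a path $\mu_1\cdots\mu_m$ from $w$, then prepending $e$ yields a path $e\mu_1\cdots\mu_m$ from $v$ that witnesses $y \in r_\G(e_{v_1})$ (the new initial edge supplies exactly the extra requirement $k_w(e) = 0$, and all remaining requirements are inherited). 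For ``$\subseteq$'': suppose $w_k \in r_\G(e_{v_1})$ is witnessed by a path $\lambda = \lambda_1\cdots\lambda_n$. If $n = 1$ then $\lambda_1 \in vE^1w$ with $w \in V_{n+1}$ and $k_w(\lambda_1) = k \ge 1$, so $w_k$ lies in the first set on the right. If $n \ge 2$, set $w' := r(\lambda_1) \in V_{n+1}$; since $1 \le n-1$ we have $k_{w'}(\lambda_1) = 0$, and $\lambda_2\cdots\lambda_n$ is a path from $w'$ witnessing $w_k \in r_\G(e_{w'_1})$, so $w_k$ lies in the union on the right. This part is essentially bookkeeping; the only point requiring care is tracking which index range the internal conditions $k_{r(\lambda_i)}(\lambda_i) = 0$ run over as an edge is prepended to or deleted from a path.

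Next I would deduce nonemptiness from the identity. It suffices to show the first set on the right, $\{w_{k_w(e)} : w \in V_{n+1},\ e \in vE^1w,\ k_w(e) \ge 1\}$, is nonempty. Suppose not; then every edge $e$ emitted by $v$ satisfies $k_{r(e)}(e) = 0$. Since $E$ has no sinks we may fix such an edge $e$ and put $w := r(e)$. By the claim in Definition~\ref{ultragraph-def}, the existence of an edge $e$ with $k_w(e) = 0$ forces $d_w = \sum_{f \in E^1w} d_{s(f)}$, and moreover $e$ is not the only element of $s(e)E^1w = vE^1w$; hence there is $e' \in vE^1w$ with $e' \ne e$. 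But $e'$ is also an edge emitted by $v$, so $k_w(e') = 0 = k_w(e)$, contradicting the injectivity of $k_w$. Hence the first set is nonempty, and therefore so is $r_\G(e_{v_1})$.

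I expect the (only mildly tricky) obstacle to be the nonemptiness step. The tempting approach of following a ``$k = 0$'' path out of $v$ and arguing that it must eventually reach an edge of positive $k$-value does not obviously work, because the labels $d_v$ may grow along such a path and give no termination mechanism. The clean observation is that the very condition that permits a $k = 0$ edge into $w$, namely $d_w = \sum_{f \in E^1 w} d_{s(f)}$, simultaneously forces a second, parallel edge $v \to w$, and that edge cannot also carry $k$-value $0$; this is precisely the role of the clause ``$e$ is not the only element of $s(e)E^1v$'' in Definition~\ref{ultragraph-def}, and it rules out $v$ emitting only $k = 0$ edges.
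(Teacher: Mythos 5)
Your proof is correct and follows essentially the same route as the paper: the displayed identity is obtained by unwinding Definition~\ref{ultragraph-def} and splitting the witnessing paths by length, and nonemptiness comes from combining the injectivity of $k_w$ with the clause that an edge $e$ with $k_w(e)=0$ is never the only element of $s(e)E^1w$. The paper states this nonemptiness step in one line (``by the assumption on $k_w$, there exists $e \in vE^1w$ with $k_w(e)\ge 1$''); your argument simply makes the implicit contradiction explicit.
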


\begin{proof}
The latter equality follows from the definition of $r_\G(e_{v_1})$.
For each $v \in V_n$,
there exists $w \in V_{n+1}$
such that $vE^1 w \neq \emptyset$.
By the assumption on $k_w$,
there exists $e \in vE^1 w$ such that $k_w(e) \geq 1$.
Thus $w_{k_w(e)} \in r_\G(e_{v_1})$.
This shows that $r_\G(e_{v_1})$ is nonempty.
\end{proof}

\begin{remark}
By definition,
$r_\G(e_{v_1}) \subset \bigcup_{k=n+1}^\infty V_k$ for $v \in V_n$.
One can show that
this property together with the equality in Lemma~\ref{lem:r_G(e_{v_1})}
uniquely determines $\{r_\G(e_{v_1})\}_{v \in E^0}$.
\end{remark}


\begin{example} \textbf{An example of the ultragraph construction:} Consider a Bratteli diagram $(E,d)$ satisfying Conditions
(1)~and~(2) of Lemma~\ref{f-d-quotient-char-lem} and whose
first three levels are as illustrated below. In the diagram,
each vertex is labeled with its name, and above the label $a$
appears the integer $d_a$.
\[\begin{tikzpicture}[scale=1.5]
  \node[circle,inner sep=0.5pt] (s) at (0,1) {\small$s$};%
  \node[inner sep=1.5pt,anchor=south] at (s.north) {\tiny2};%
  \node[circle,inner sep=0.5pt] (t) at (0,0) {\small$t$};%
  \node[inner sep=1.5pt,anchor=south] at (t.north) {\tiny2};%
  \node[circle,inner sep=0.5pt] (u) at (0,-1) {\small$u$};%
  \node[inner sep=1.5pt,anchor=south] at (u.north) {\tiny3};%
  \node[circle,inner sep=0.5pt] (v) at (2,0.75) {\small$v$};%
  \node[inner sep=1.5pt,anchor=south] at (v.north) {\tiny8};%
  \node[circle,inner sep=0.5pt] (w) at (2,-0.75) {\small$w$};%
  \node[inner sep=1.5pt,anchor=south] at (w.north) {\tiny7};%
  \node[circle,inner sep=0.5pt] (x) at (4,1) {\small$x$};%
  \node[inner sep=1.5pt,anchor=south] at (x.north) {\tiny9};%
  \node[circle,inner sep=0.5pt] (y) at (4,0) {\small$y$};%
  \node[inner sep=1.5pt,anchor=south] at (y.north) {\tiny22};%
  \node[circle,inner sep=0.5pt] (z) at (4,-1) {\small$z$};%
  \node[inner sep=1.5pt,anchor=south] at (z.north) {\tiny16};%
  \node at (5,0) {\dots};%
%
  \draw[-latex] (s)-- node[above,pos=0.6] {\small$e$} (v) node[pos=0.5,anchor=south,inner sep=1pt]  {};
  \draw[opacity=0,color=white] (t)--(v) node[pos=0.5,sloped,inner sep=0.25cm] (tv) {};
  \draw[-latex] (t) .. controls (tv.north) .. node[above,pos=0.3] {\small$e'$} (v);
  \draw[-latex] (t) .. controls (tv.south) .. node[below,pos=0.8] {\small$e''$} (v);
  \draw[opacity=0,color=white] (t)--(w) node[pos=0.5,sloped,inner sep=0.25cm] (tw) {};
  \draw[-latex] (t) .. controls (tw.north) .. node[above,pos=0.8] {\small$f$}(w);
  \draw[-latex] (t) .. controls (tw.south) .. node[below,pos=0.3] {\small$f'$}(w);

  \draw[-latex] (u)-- node[below,pos=0.6] {\small$f''$} (w) node[pos=0.5,anchor=south,inner sep=1pt] {};
  \draw[-latex] (v)-- node[above,pos=0.6] {\small$g$} (x) node[pos=0.5,anchor=south,inner sep=1pt] {};
  \draw[-latex] (v)-- node[above,pos=0.6] {\small$h$} (y) node[pos=0.5,anchor=south,inner sep=1pt] {};
  \draw[-latex] (v)-- node[below,pos=0.3] {\small$k$} (z) node[pos=0.5,anchor=south,inner sep=1pt] {};
  \draw[opacity=0,color=white] (w)--(y) node[pos=0.5,sloped,inner sep=0.25cm] (vy) {};
  \draw[-latex] (w) .. controls (vy.north) .. node[above,pos=0.7] {\small$h'$}(y);
  \draw[-latex] (w) .. controls (vy.south) .. node[below,pos=0.55] {\small$h''$} (y);
  \draw[-latex] (w)-- node[below,pos=0.6] {\small$k'$} (z) node[pos=0.5,anchor=south,inner sep=1pt] {};
\end{tikzpicture}\]
The values of $\Delta$ for the vertices visible in the diagram
are
\[\begin{array}{c@{\extracolsep{2em}}c@{\extracolsep{2em}}c}
\Delta_s = 2 & \Delta_v = 5 &  \Delta_x = 2 \\
\Delta_t = 2 & \Delta_w = 3 & \Delta_y= 3\\
\Delta_u = 3 & \ & \Delta_z = 3 \\
\end{array}\]
So the corresponding section of the resulting ultragraph $\G$
will have vertices
\[
G^0 = \{s_1, t_1, u_1, u_2, v_1, v_2, v_3, v_4, w_1, w_2,
       x_1, y_1, y_2, z_1, z_2, \ldots \},
\]
and each of these vertices ${a_i}$ will emit exactly one
ultraedge $e_{a_i}$. For $i \not=1$, we have $r_\G(e_{a_i}) =
\{a_{i-1}\}$. To determine the ranges of the $e_{a_1}$, we must
choose injections $k_a\colon E^1a \to \{0,1,\ldots,
\Delta_a-1\}$ for $a \in E^0$ with the properties described
above; in particular, this necessitates that $0$ is in the
image of $k_a$ only when $a=w$ or $a=y$, and also that
$k_w(f'')\neq 0$ and $k_y(h)\neq 0$.

One possible set of choices of injections $k_a$ is
\begin{align*}
&\begin{array}[t]{ccc}
k_v(e)=1,& k_v(e')=3,& k_v(e'')=4,\\
k_w(f)=0,& k_w(f')=2,& k_w(f'')=1,
\end{array}
&
&\begin{array}[t]{ccc}
k_x(g)=1,&& \\
k_y(h)=1,& k_y(h')=0,& k_y(h'')=2,\\
k_z(k)=2,& k_z(k')=1.&
\end{array}
\end{align*}
We can calculate
\begin{gather*}
r_\G(e_{s_1}) = \{v_1\},
   \qquad  r_\G(e_{t_1}) = \{v_3, v_4, w_2, y_2, z_1\} \cup r_\G(e_{y_1}),
   \qquad r_\G(e_{u_1}) = \{w_1\},\\
r(e_{v_1}) = \{x_1, y_1, z_2\},
   \quad\text{ and }\quad
   r(e_{w_1}) = \{z_1, y_2\} \cup r_\G(e_{y_1}).
\end{gather*}
We may now draw the fragment of the ultragraph $\G$
corresponding to the given fragment of the Bratteli diagram
$(E,d)$.
\[\begin{tikzpicture}[scale=2]
  \node[circle,inner sep=0.5pt] (s1) at (0,-1) {\small$u_1$};%
  \node[circle,inner sep=0.5pt] (s2) at (0,-0.5) {\small$u_2$};%
  \node[circle,inner sep=0.5pt] (t1) at (0,0.5) {\small$t_1$};%
  \node[circle,inner sep=0.5pt] (u1) at (0,1.5) {\small$s_1$};%
  \node[circle,inner sep=0.5pt] (v1) at (2,-1) {\small$w_1$};%
  \node[circle,inner sep=0.5pt] (v2) at (2,-0.5) {\small$w_2$};%
  \node[circle,inner sep=0.5pt] (w1) at (2,0.5) {\small$v_1$};%
  \node[circle,inner sep=0.5pt] (w2) at (2,1) {\small$v_2$};%
  \node[circle,inner sep=0.5pt] (w3) at (2,1.5) {\small$v_3$};%
  \node[circle,inner sep=0.5pt] (w4) at (2,2) {\small$v_4$};%
  \node[circle,inner sep=0.5pt] (x1) at (4,-1) {\small$z_1$};%
  \node[circle,inner sep=0.5pt] (x2) at (4,-0.5) {\small$z_2$};%
  \node[circle,inner sep=0.5pt] (y1) at (4,0.5) {\small$y_1$};%
  \node[circle,inner sep=0.5pt] (y2) at (4,1) {\small$y_2$};%
  \node[circle,inner sep=0.5pt] (z1) at (4,2) {\small$x_1$};%
  \node at (5,0.5) {\dots};%
%
  \draw[-latex] (s2)--(s1);
  \draw[-latex] (s1)--(v1);
  \draw[-latex] (t1)--(v2);
  \draw[-latex] (t1)--(w3);
  \draw[-latex] (t1)--(w4);
  \draw[-latex] (t1)--(x1);
  \draw[-latex] (t1)--(y2);
  \draw[-latex] (u1)--(w1);
  \draw[-latex] (v2)--(v1);
  \draw[-latex] (v1)--(x1);
  \draw[-latex] (v1)--(y2);
  \draw[-latex] (w4)--(w3);
  \draw[-latex] (w3)--(w2);
  \draw[-latex] (w2)--(w1);
  \draw[-latex] (w1)--(x2);
  \draw[-latex] (w1)--(y1);
  \draw[-latex] (w1)--(z1);
  \draw[-latex] (x2)--(x1);
  \draw[-latex] (y2)--(y1);
\end{tikzpicture}\]
Note that by definition of the ultragraph $\G$, each vertex
emits exactly one ultraedge, so in the picture any multiple
arrows leaving the same vertex actually have the same label and
constitute a single ultraedge of $\G$.
\end{example}

\subsection{Sufficient conditions for realizations} \label{sufficient-cond-subsec}

\begin{theorem} \label{AF-as-ultra-thm}
Let $A$ be an AF-algebra with a Bratteli diagram satisfying the
conditions of Lemma~\ref{f-d-quotient-char-lem}.   If $\G$ is
an ultragraph constructed from this Bratteli diagram as in
Definition~\ref{ultragraph-def}, then $A \cong C^*(\G)$. In
addition, $C^*(\G)$ is an Exel-Laca algebra.
\end{theorem}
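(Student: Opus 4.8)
The plan is to prove the two assertions separately. For the isomorphism $A \cong C^*(\G)$, I would set up an isomorphism inductively along the Bratteli diagram. The key observation is that the ultragraph $\G$ was built so that the "$\Delta$-tails" $v_1 \to v_2 \to \cdots \to v_{\Delta_v - 1}$ attached to each vertex $v$ of $E$ encode exactly the extra multiplicity needed to realize the inclusion data of $(E,d)$. Concretely, for each level $n$ I would describe the finite-dimensional subalgebra $B_n \subseteq C^*(\G)$ generated by the projections $\{p_{v_i}\}$ and the partial isometries along the tails for $v \in V_1 \cup \cdots \cup V_n$, together with the paths reaching down into higher levels; the point is to identify $B_n \cong A_n = \bigoplus_{v \in V_n} M_{d_v}(\C)$ and check that the inclusions $B_n \hookrightarrow B_{n+1}$ have the same multiplicity matrices $(|vE^1w|)_{v \in V_n, w \in V_{n+1}}$ as the connecting maps $\phi_n$ of $A$. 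The bookkeeping device for this is precisely Lemma~\ref{Evil-Lemma-:-(}: at each stage one has a projection $q^v$ (a minimal projection in the $v$-summand, sitting inside the next-level algebra) with $(1_{B^v} - q^v)C = \{0\}$, and the lemma computes the resulting matrix sizes $a_w$ and the multiplicities. I would apply this lemma level by level, using the combinatorial identity $\Delta_v = (d_v - \sum_{e \in E^1 v} d_{s(e)}) + |E^1 v|$ and the defining property of the injections $k_v$ to verify the hypotheses, in particular that the range sets $r_\G(e_{v_1})$ distribute the "new" matrix units correctly (this is where Lemma~\ref{lem:r_G(e_{v_1})} gets used). Passing to the direct limit and invoking Bratteli's theorem (or directly the universal property of the direct limit together with a gauge-invariant uniqueness argument for $C^*(\G)$) then yields $A \cong C^*(\G)$.

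For the second assertion, that $C^*(\G)$ is an Exel-Laca algebra, I would appeal to the characterization in Remark~\ref{rem:ELalgis,,,}: an ultragraph $C^*$-algebra is an Exel-Laca algebra precisely when the source map of the ultragraph is bijective. By construction in Definition~\ref{ultragraph-def}, $\G^1 = \{e_{v_i} : v_i \in G^0\}$ and $s_\G(e_{v_i}) = v_i$, so $s_\G$ is a bijection from $\G^1$ onto $G^0$ essentially by fiat. Hence $\G$ is isomorphic to $\G_A$ for $A$ the edge matrix of $\G$, and so $C^*(\G) \cong \mathcal{O}_A$ is an Exel-Laca algebra. The only mild subtlety is to confirm that $\G$ genuinely is a well-defined ultragraph — that every $r_\G(e_{v_1})$ is nonempty — but that is exactly what Lemma~\ref{lem:r_G(e_{v_1})} already supplies.

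The main obstacle is the inductive identification of the finite-dimensional building blocks of $C^*(\G)$ and the verification that the inclusions have the right multiplicity matrices. The difficulty is that the range of $e_{v_1}$ is not simply the set of immediate successors of $v$ in $E$; it is the "compressed" set obtained by following the $k_{r(\lambda_i)}(\lambda_i) = 0$ edges through possibly many levels, so the subalgebra of $C^*(\G)$ corresponding to level $n$ of $E$ is not literally generated by ultraedges between two consecutive "strata" of $G^0$. I would handle this by telescoping: rather than trying to match $C^*(\G)$ level-by-level with $E$ directly, I would choose, for each $n$, a finite subalgebra of $C^*(\G)$ spanned by $p_{v_i}$'s and products $s_{e_{v_i}} s_{e_{v_i}}^*$ and longer path products terminating at level-$(n+1)$ tail vertices, show this is a copy of $A_n$ using the matrix-unit computations, and then feed the transition from level $n$ to level $n+1$ into Lemma~\ref{Evil-Lemma-:-(} with $B$ the image of $A_n$, $C$ the algebra generated by the relevant $p_{v_i}$'s for $v \in V_{n+1}$, and $q^v$ the sum of a suitable family of tail projections; the output matrix sizes $a_w$ must then be checked to equal $d_w$ for $w \in V_{n+1}$, which reduces to the arithmetic of $\Delta_w$ and the injectivity/surjectivity properties of $k_w$. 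This is the technically heaviest part, but it is all linear bookkeeping once the right finite-dimensional subalgebras are isolated.
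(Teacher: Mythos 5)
Your proposal is correct and follows essentially the same route as the paper: both identify the tail algebras $C^v \cong M_{\Delta_v}(\C)$ with minimal projection $q^v = p_{r_\G(e_{v_1})}$, apply Lemma~\ref{Evil-Lemma-:-(} inductively (with the hypotheses verified via Lemma~\ref{lem:r_G(e_{v_1})} and the identity for $\Delta_v$) to show that the $B_n$ reproduce the Bratteli diagram $(E,d)$, and then conclude via Bratteli's classification; the Exel-Laca assertion is handled identically through the bijectivity of the source map and Remark~\ref{rem:ELalgis,,,}.
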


\begin{proof}
Let $(E,d)$ be a Bratteli diagram for $A$ with the vertices
partitioned into levels as $E^0 = \bigsqcup_{n=1}^\infty V_n$
and satisfying the conditions of
Lemma~\ref{f-d-quotient-char-lem}, and let $\G$ be an
ultragraph constructed from $(E,d)$ as in
Definition~\ref{ultragraph-def}. Our strategy is to find a
direct limit decomposition of $C^*(\G)$ so that at each level
we may apply Lemma~\ref{Evil-Lemma-:-(} to see that the
inclusion of finite-dimensional algebras is the same as the
corresponding inclusion in the direct limit decomposition of
$A$ determined by $(E,d)$.

For each $v \in E^0$ let
\[
C^v := C^* ( \{ s_{e_{v_i}} : 1 \leq i \leq \Delta_v - 1 \}).
\]
We have $s_{e_{v_i}}s_{e_{v_i}}^*=p_{v_i}$
for $1 \leq i \leq \Delta_v - 1$ and
$s_{e_{v_i}}^*s_{e_{v_i}}=p_{v_{i-1}}$
for $2 \leq i \leq \Delta_v - 1$.
We define a projection
$q^v := p_{r_\G(e_{v_1})}=s_{e_{v_1}}^*s_{e_{v_1}} \in C^v$,
which is orthogonal to $p_{v_i}$ for $1 \leq i \leq \Delta_v - 1$.
These computations show that there exist matrix units
$\{\gamma^v_{k,l} : 0\leq k,l \leq \Delta_v -1\}$ in $C^v$
such that $\gamma^v_{0,0}=q^v$, $\gamma^v_{i,i}=p_{v_i}$
and $\gamma^v_{i,i-1}=s_{e_{v_i}}$
for $1 \leq i \leq \Delta_v - 1$.
Explicitly, $\gamma^v_{k,l} \in C^v$ is given by
\[
\gamma^v_{k,l} :=
s_{e_{v_k}}s_{e_{v_{k-1}}} \cdots s_{e_{v_1}}
q^vs_{e_{v_1}}^*s_{e_{v_2}}^* \cdots s_{e_{v_l}}^*
\]
for $0\leq k,l \leq \Delta_v -1$.
This shows that $C^v$ is isomorphic to $M_{\Delta_v} (\C)$
with minimal projection $q^v$
and the unit $\sum_{i=1}^{\Delta_v - 1}p_{v_{i}}+q^v$.
For each $n \in \N$
\[
C_n := C^*( \{ s_{e_{v_i}} : v \in V_n \text{ and } 1
\leq i \leq \Delta_v - 1 \} )
\]
is equal to $\bigoplus_{v \in V_n} C^v$.
Moreover, for $n \in \N$, define
\[\textstyle
B_n := C^*\big(\bigcup_{j=1}^n C_j \big)
= C^*\big( \{ s_{e_{v_i}} : v \in \bigcup_{j=1}^n V_j \text{ and } 1 \leq i \leq \Delta_v-1 \} \big).
\]
\textbf{Claim:} For each $n \in \N$,
the unit $1_{B_n}$ of $B_n$ is given by
$\sum_{v \in \bigcup_{j=1}^n V_j}\sum_{i=1}^{\Delta_v -1}p_{v_i}
+ \sum_{v\in V_n}q^v$, and
there exists a decomposition $B_n = \bigoplus_{v \in V_n} B^v$
such that each $B^v \cong M_{d_v} (\C)$ with
minimal projection $q^v$;
and for each $n \in \N$, the inclusion $B_n \hookrightarrow B_{n+1}$
has multiplicity matrix $( |vE^1w| )_{v \in V_n, w \in V_{n+1}}$.

We proceed by induction on $n$.
When $n =1$, let $B^v := C^v$ for $v \in V_1$.
Then $B_1= C_1$ has the decomposition
$B_1=\bigoplus_{v \in V_1} B^v$.
For each $v \in V_1$,
we have $\Delta_v = d_v$ because $v$ is a source.
Hence $B^v = C^v$ is isomorphic to $M_{d_v} (\C)$
with minimal projection $q^v$
and the unit $\sum_{i=1}^{\Delta_v - 1}p_{v_{i}}+q^v$.
This shows the claim in the case $n=1$.
For the inductive step, assume that $B_n$ has the desired decomposition.
To apply Lemma~\ref{Evil-Lemma-:-(}
to the \Ca $B_{n+1}$ which is generated by $B_n$ and $C_{n+1}$,
we check that for each $v \in V_n$
the minimal projection $q^{v} \in B^v$ is in $C_{n+1}$ and
satisfies $(1_{B^v} - q^{v}) C_{n+1} = \{0\}$.
We see that
\[
\sum_{v \in V_{n}}(1_{B^v} - q^{v})
=1_{B_n} - \sum_{v \in V_{n}}q^{v}
=\sum_{v \in \bigcup_{j=1}^n V_j}\sum_{i=1}^{\Delta_v -1}p_{v_i}
\]
which is orthogonal to $C_{n+1}$.
This proves $(1_{B^v} - q^{v}) C_{n+1} = \{0\}$
for all $v \in V_{n}$.
For each $v \in V_{n}$,
Lemma~\ref{lem:r_G(e_{v_1})} implies
\begin{equation}\label{eq:qv=}
q^{v}=p_{r_\G(e_{v_1})}
  = \sum_{w \in V_{n+1}}\Big(
      \sum_{\substack{e \in vE^1 w\\ k_w(e)\ge 1}} p_{w_{k_w(e)}} + \sum_{\substack{e \in vE^1 w \\ k_w(e)=0}} p_{r_G(e_{w_1})}\Big)
= \sum_{w \in V_{n+1}}\sum_{e \in vE^1 w}\gamma^w_{k_w(e),k_w(e)}.
\end{equation}
Hence $q^{v} \in C_{n+1}$. Thus we can apply
Lemma~\ref{Evil-Lemma-:-(} to obtain the decomposition $B_{n+1}
= \bigoplus_{w \in V_{n+1}} B^w$. Since the inclusion $C^w
\hookrightarrow B^w$ has multiplicity $1$ for $w\in W$, the
projection $q^{w}$ is minimal in $B^w$. From \eqref{eq:qv=},
$q^{v} 1_{C^w}$ has rank $|vE^1w|$ in $C^w$ for $w \in
V_{n+1}$. The definition of $\Delta_w$ implies that
\[
d_w = \Delta_w + \sum_{w \in V_{n+1}} (d_v-1) |vE^1w|.
\]
Hence $B^w$ is isomorphic to $M_{d_w} (\C)$ for $w \in V_{n+1}$.
The conclusion of Lemma~\ref{Evil-Lemma-:-(}
also shows that the inclusion $B_n \hookrightarrow B_{n+1}$
has multiplicity matrix $( |vE^1w| )_{v \in V_n, w \in V_{n+1}}$,
and that the unit of $B_{n+1}$ is equal to
$\sum_{v \in \bigcup_{j=1}^{n+1} V_j}\sum_{i=1}^{\Delta_v -1}p_{v_i}
+ \sum_{w\in V_{n+1}}q^w$.
This proves the claim.

We see that $\bigcup_{n=1}^\infty B^n$ contains $\{ s_e : e \in \G^1 \}$.
Since each vertex $v$ in $\G$ emits exactly one ultraedge $e$,
$p_v = s_es_e^*$ is contained in $\bigcup_{n=1}^\infty B^n$.
Thus $\bigcup_{n=1}^\infty B^n$ contains
all the generators of $C^*(\G)$.  Hence $C^*(\G) =
\overline{\bigcup_{n=1}^\infty B^n}$ is an AF-algebra, and the
preceding paragraphs show that $(E,d)$ is a Bratteli diagram
for $C^*(\G)$, giving $A \cong C^*(\G)$.
Since every vertex of $\G$ emits exactly one ultraedge,
$C^*(\G)$ is an Exel-Laca algebra (see Remark~\ref{rem:ELalgis,,,}).
\end{proof}

\begin{corollary} \label{no-nonzero-f-d-quotient-then-EL}
If $A$ is an AF-algebra with no nonzero finite-dimensional
quotients, then $A$ is isomorphic to an Exel-Laca algebra.
\end{corollary}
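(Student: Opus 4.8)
The plan is to simply assemble the machinery developed above, since this corollary is the natural terminal point of Subsection~\ref{sufficient-cond-subsec}. Because $A$ is an AF-algebra with no nonzero finite-dimensional quotients, Lemma~\ref{f-d-quotient-char-lem} supplies a Bratteli diagram $(E,d)$ for $A$ with $d_v \geq 2$ for every $v \in E^0$ and satisfying the multiplicity compatibility condition~(2) appearing there. This is precisely the input required by the ultragraph construction, so I would fix such a diagram $(E,d)$ and let $\G$ be the ultragraph associated to it by Definition~\ref{ultragraph-def}.

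The remaining work is already done in Theorem~\ref{AF-as-ultra-thm}: applied to this $(E,d)$ and $\G$, it gives simultaneously that $A \cong C^*(\G)$ and that $C^*(\G)$ is an Exel-Laca algebra. The latter holds because, by construction, each vertex of $\G$ emits exactly one ultraedge, so the source map of $\G$ is a bijection and Remark~\ref{rem:ELalgis,,,} identifies $C^*(\G)$ with an Exel-Laca algebra. Composing the isomorphism $A \cong C^*(\G)$ with this identification yields that $A$ is isomorphic to an Exel-Laca algebra, which is the assertion.

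I do not expect any real obstacle here; the corollary is exactly the special case of Theorem~\ref{AF-as-ultra-thm} obtained by using Lemma~\ref{f-d-quotient-char-lem} to guarantee that an admissible Bratteli diagram exists. The only point meriting a glance is that the Bratteli diagram produced by Lemma~\ref{f-d-quotient-char-lem} satisfies verbatim the hypotheses invoked at the start of Definition~\ref{ultragraph-def} and Theorem~\ref{AF-as-ultra-thm} --- but those hypotheses are literally the conclusion of Lemma~\ref{f-d-quotient-char-lem}, so the fit is immediate and the proof is a one-line citation of the two results.
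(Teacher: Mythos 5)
Your proposal is correct and is essentially identical to the paper's own proof: both invoke Lemma~\ref{f-d-quotient-char-lem} to obtain a Bratteli diagram satisfying conditions (1) and (2), and then apply Theorem~\ref{AF-as-ultra-thm} to conclude that $A$ is isomorphic to an Exel-Laca algebra. The extra remarks you add about the source map of $\G$ being bijective are accurate but already contained in the statement and proof of Theorem~\ref{AF-as-ultra-thm}.
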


\begin{proof}
Since $A$ has no nonzero finite-dimensional quotients,
Lemma~\ref{f-d-quotient-char-lem} implies that $A$ has a
Bratteli diagram satisfying the conditions stated.  It follows
from Theorem~\ref{AF-as-ultra-thm} that $A$ is isomorphic to an
Exel-Laca algebra.
\end{proof}

The following result is important in that it is one of the few
instances where we can give a complete characterization of
AF-algebras in a certain graph $C^*$-algebra class.
In particular, we give necessary and sufficient conditions
for an AF-algebra to be the $C^*$-algebra
of a row-finite graph with no sinks.

\begin{theorem} \label{no-unital-quotient-then-graph-alg}
Let $A$ be an AF-algebra.  Then the following are equivalent:
\begin{enumerate}
\item $A$ has no (nonzero) unital quotients.
\item $A$ is isomorphic to the $C^*$-algebra of a
  row-finite graph with no sinks.
\end{enumerate}
\end{theorem}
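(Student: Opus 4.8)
The plan is to prove the two implications separately; the implication (2)$\Rightarrow$(1) is short, and essentially all the work (and all the constructive content of the paper) goes into (1)$\Rightarrow$(2).

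For (2)$\Rightarrow$(1), suppose $A \cong C^*(E)$ for a row-finite graph $E$ with no sinks. Since $A$ is an AF-algebra, \cite[Theorem~2.4]{KPR} forces $E$ to have no cycles, so $E$ vacuously satisfies Condition~(K); hence every ideal of $C^*(E)$ is gauge-invariant by \cite[Corollary~3.8]{BHRS}, exactly as in the proof of Lemma~\ref{ideal_quotient}. Consequently every nonzero quotient of $A$ is of the form $C^*(E\setminus H)$ for a saturated hereditary subset $H \subsetneq E^0$, where $E\setminus H$ has vertices $E^0\setminus H$ and edges $\{e\in E^1: r(e)\notin H\}$. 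I would then observe that $E\setminus H$ is again row-finite with no sinks: row-finiteness is inherited, and if some $v\in E^0\setminus H$ emitted no edge of $E\setminus H$, then, since $E$ has no sinks and is row-finite, the defining edges of $v$ all land in $H$ and saturation of $H$ forces $v\in H$, a contradiction. As $E\setminus H$ is nonempty (the quotient is nonzero), has no sinks and no cycles, following edges from any vertex produces an infinite path whose vertices are pairwise distinct, so $(E\setminus H)^0$ is infinite and $C^*(E\setminus H)$ is non-unital. Hence $A$ has no nonzero unital quotients.

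For (1)$\Rightarrow$(2), assume $A$ has no nonzero unital quotients. By Lemma~\ref{unital-quotient-char-lem} there is a Bratteli diagram $(E,d)$ for $A$ with $d_v\geq 2$ and $d_v > \sum_{e\in E^1v} d_{s(e)}$ for every $v\in E^0$; in particular $(E,d)$ satisfies the hypotheses of Lemma~\ref{f-d-quotient-char-lem}. I would form the ultragraph $\G$ of Definition~\ref{ultragraph-def}, making the choice---available precisely because $d_v > \sum_{e\in E^1v} d_{s(e)}$ holds for \emph{every} $v$---of injections $k_v\colon E^1v\to\{0,1,\ldots,\Delta_v-1\}$ whose images omit $0$. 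By Theorem~\ref{AF-as-ultra-thm} we then have $A\cong C^*(\G)$, and $C^*(\G)$ is an Exel-Laca algebra. The crucial point is that, with this choice of the $k_v$, no path $\lambda=\lambda_1\cdots\lambda_n$ with $n\geq 2$ can satisfy $k_{r(\lambda_i)}(\lambda_i)=0$ for $i<n$, since $k_{r(\lambda_1)}$ never takes the value $0$. Hence the set $r_\G(e_{v_1})$ receives no contribution from paths of length $\geq 2$, and (comparing with Lemma~\ref{lem:r_G(e_{v_1})}) it collapses to $r_\G(e_{v_1}) = \{\, r(e)_{k_{r(e)}(e)} : e\in vE^1 \,\}$, which is finite because the graph $E$ of a Bratteli diagram is row-finite. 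Together with $r_\G(e_{v_i}) = \{v_{i-1}\}$ for $i\geq 2$, this shows every ultraedge of $\G$ has finite, nonempty range.

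It then remains to pass from $\G$ to a graph. Since every ultraedge of $\G$ has finite range, the edge-splitting procedure of \cite[Remark~2.5]{KMST2}---used in the same way in the proof of Lemma~\ref{row-finite-graphs-matrices-lem}(2)---gives $C^*(\G)\cong C^*(\tilde E)$, where $\tilde E$ has vertex set $G^0$ and, for each ultraedge $e$ of $\G$, one edge from $s_\G(e)$ to each vertex of $r_\G(e)$. Because each vertex $v_i$ of $\G$ emits exactly one ultraedge and $r_\G(e_{v_i})$ is finite and nonempty, each vertex of $\tilde E$ emits finitely many, but at least one, edges; thus $\tilde E$ is a row-finite graph with no sinks and $A\cong C^*(\tilde E)$, completing the proof. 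I expect the main obstacle to be this implication, and within it the verification that, once all the $k_v$ avoid $0$, the recursive term $\bigcup r_\G(e_{w_1})$ in Lemma~\ref{lem:r_G(e_{v_1})} drops out, so that $\G$ has only finite-range ultraedges and the edge-splitting trick yields a genuine row-finite graph with no sinks rather than merely an ultragraph.
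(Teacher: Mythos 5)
Your proposal is correct and follows essentially the same route as the paper: the implication (2)$\Rightarrow$(1) via gauge-invariance of ideals and the fact that a nonempty graph with no cycles and no sinks has infinitely many vertices, and (1)$\Rightarrow$(2) via Lemma~\ref{unital-quotient-char-lem}, the ultragraph construction with all $k_v$ omitting $0$ so that Lemma~\ref{lem:r_G(e_{v_1})} yields finite ranges, and the edge-splitting of \cite[Remark~2.5]{KMST2}. The only cosmetic difference is the choice of citation for gauge-invariance of ideals, which does not affect the argument.
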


\begin{proof}
We shall first prove that $(1)$ implies $(2)$. Suppose that $A$
has no unital quotients. By
Corollary~\ref{unital-quotient-char-lem} there is a Bratteli
diagram $(E,d)$ for $A$ such that for all $v \in E^0$ we have
both $d_v \geq 2$ and $d_v > \sum_{e \in E^1 v} d_{s(e)}$.  Let
$\G$ be an ultragraph constructed from $(E,d)$ as in
Definition~\ref{ultragraph-def}. Theorem~\ref{AF-as-ultra-thm}
implies that $A \cong C^*(\G)$. Furthermore, since $d_v >
\sum_{e \in E^1 v} d_{s(e)}$, we have $k_v(e) \geq 1$ for all
$v \in E^0$ and $e \in E^1v$. For $v \in E^0$,
Lemma~\ref{lem:r_G(e_{v_1})} implies $r_\G(e_{v_1}) = \{
w_{k_w(e)} : w \in V_{n+1}, e \in vE^1 w, k_w(e)\ge 1\}$. Thus,
$r_\G(e)$ is finite for every $e \in \G^1$. Hence $C^*(\G)$ is
isomorphic to a graph $C^*$-algebra of a row-finite graph with
no sinks (see \cite[Remark~5.25]{KMST2}).

We next prove that $(2)$ implies $(1)$.
Suppose that $A \cong C^*(E)$, where $E$ is a row-finite graph with no sinks.
Since $C^*(E)$ is an AF-algebra, it follows from
\cite[Theorem~2.4]{KPR} that $E$ has no cycles.  Thus $E$
satisfies Condition~(K), and \cite[Theorem~4.4]{BPRS} implies
that every ideal of $C^*(E)$ is gauge invariant.  Suppose $I$
is a proper ideal of $C^*(E)$.  Then $I = I_H$ for some
saturated hereditary proper subset $H \subset E^0$, and $C^*(E)
/ I_H \cong C^*(E_H)$, where $E_H$ is the nonempty subgraph of
$E$ with $E_H^0 := E^0 \setminus H$ and $E_H^1 := \{ e \in E^1
: r(e) \notin H \}$ (see \cite[Theorem~4.1]{BPRS}).  Since $H$
is saturated hereditary, that $E$ has no sinks implies that
$E_H$ has no sinks. Since $E$ has no cycles, $E_H$ also has no
cycles.  Because $E_H$ is a nonempty graph with no cycles and
no sinks, $E_H^0$ is infinite. Thus $C^*(E_H)$ is nonunital
\cite[Proposition~1.4]{KPR}.
\end{proof}

\begin{corollary} \label{cor:stable}
Let $A$ be a stable AF-algebra. Then there is a row-finite
graph $E$ with no sinks such that $A \cong C^*(E)$. In
particular, $A$ is isomorphic to a graph $C^*$-algebra, to an
Exel-Laca algebra, and to an ultragraph $C^*$-algebra.
\end{corollary}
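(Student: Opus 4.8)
The plan is to reduce everything to Theorem~\ref{no-unital-quotient-then-graph-alg}, so the real work is just to check that a stable AF-algebra has no nonzero unital quotients. First I would record the two standard facts about stabilizations. (a) Every quotient of a stable $C^*$-algebra is again stable: if $B$ is stable, the ideals of $B\cong B\otimes\K$ are exactly those of the form $J\otimes\K$, and $(B\otimes\K)/(J\otimes\K)\cong (B/J)\otimes\K$. (b) A nonzero stable $C^*$-algebra is never unital: writing $B\cong B_0\otimes\K$ and letting $p_n$ be the standard increasing sequence of finite-rank projections in $\K$, the net $(1_{B_0}\otimes p_n)_n$ is an approximate identity, and it fails to be norm-convergent (hence $B$ has no unit) unless $B_0\otimes(p_{n+1}-p_n)=0$ for large $n$, i.e.\ $B=0$.

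Combining (a) and (b): if $A$ is stable and $A/I$ is a nonzero quotient, then $A/I$ is stable and nonzero, hence nonunital. So $A$ has no nonzero unital quotients, and since $A$ is an AF-algebra, Theorem~\ref{no-unital-quotient-then-graph-alg} applies to give a row-finite graph $E$ with no sinks together with an isomorphism $A\cong C^*(E)$. This is the main assertion of the corollary. For the remaining claims: $A$ is then a graph $C^*$-algebra by construction; Lemma~\ref{row-finite-graphs-matrices-lem}(1) associates to $E$ a row-finite $\{0,1\}$-matrix $A_E$ with no identically zero rows and an isomorphism $C^*(E)\cong\mathcal{O}_{A_E}$, so $A$ is (isomorphic to) an Exel-Laca algebra; and by Remark~\ref{rem:ELalgis,,,} every graph $C^*$-algebra is an ultragraph $C^*$-algebra, so $A$ is an ultragraph $C^*$-algebra.

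There is essentially no obstacle here: the substantive content has already been packaged into Theorem~\ref{no-unital-quotient-then-graph-alg}, and the only step needing even mild care is the implication ``stable $\Rightarrow$ no nonzero unital quotients,'' whose two ingredients are routine facts about $B\otimes\K$. (One could instead argue directly with Bratteli diagrams---a stable AF-algebra admits a Bratteli diagram whose partial multiplicities never force a unital connecting map---but the quotient argument is shorter and more transparent.)
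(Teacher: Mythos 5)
Your proposal is correct and follows exactly the paper's route: the paper's proof likewise observes that every nonzero quotient of a stable $C^*$-algebra is stable, hence nonunital, and then invokes Theorem~\ref{no-unital-quotient-then-graph-alg}. You merely supply more detail on the two standard facts about stabilizations and on the final ``in particular'' deductions via Lemma~\ref{row-finite-graphs-matrices-lem} and Remark~\ref{rem:ELalgis,,,}, all of which is consistent with the paper.
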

\begin{proof}
Since any nonzero quotient of a stable $C^*$-algebra is stable,
every quotient of $A$ is stable, and in particular nonunital.
The result then follows from
Theorem~\ref{no-unital-quotient-then-graph-alg}.
\end{proof}

\begin{lemma}\label{lem:M_2(C^*(G))}
Let $\G = (G^0, \G^1, r,s)$ be an ultragraph. Let $\wt{\G} =
(\wt{G}^0, \wt{\G}^1, \tilde{r},\tilde{s})$ be the ultragraph
defined by $\wt{G}^0:=G^0\sqcup\{v_0\}$ and
$\wt{\G}^1:=\G^1\sqcup\{e_0\}$ with
\[
\tilde{s}|_{\G^1}=s,\qquad \tilde{s}(e_0)=v_0,\qquad
\tilde{r}|_{\G^1}=r, \qquad \text{ and } \qquad \tilde{r}(e_0)=G^0.
\]
Then $C^*\big(\wt{\G}\big)\cong M_2(\MU{C^*(\G)})$, where
$\MU{C^*(\G)}$ is the minimal unitization of $C^*(\G)$.
\end{lemma}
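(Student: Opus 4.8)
The plan is to exhibit an explicit Cuntz--Krieger $\wt{\G}$-family inside $M_2(\MU{C^*(\G)})$, use the universal property of $C^*(\wt{\G})$ to produce a surjection $\pi\colon C^*(\wt{\G})\to M_2(\MU{C^*(\G)})$, and then deduce that $\pi$ is injective from the gauge-invariant uniqueness theorem for ultragraph $C^*$-algebras. Write $B:=C^*(\G)$, with canonical generating $\G$-family $\{t_e,q_A\}$, let $1$ denote the unit of $\MU{B}$ (so $B\subseteq\MU{B}$), and let $\{e_{ij}\}_{i,j\in\{1,2\}}$ be the standard matrix units of $M_2(\C)\subseteq M_2(\MU{B})$. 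A routine check shows that $\wt{\G}^0$ is precisely the collection of sets $A$, $A\sqcup\{v_0\}$, $G^0\setminus A$, and $(G^0\setminus A)\sqcup\{v_0\}$ as $A$ ranges over $\G^0$; indeed this collection is an algebra of subsets of $\wt{G}^0$, it contains all singletons, and it contains the ranges $\{r(e):e\in\G^1\}\cup\{G^0\}$ of $\wt{\G}$, so it equals the smallest such algebra. Using this description, define a family in $M_2(\MU{B})$ by $P_A:=q_Ae_{11}$, $P_{G^0\setminus A}:=(1-q_A)e_{11}$, $P_{A\sqcup\{v_0\}}:=q_Ae_{11}+e_{22}$, and $P_{(G^0\setminus A)\sqcup\{v_0\}}:=(1-q_A)e_{11}+e_{22}$ for $A\in\G^0$, together with $S_e:=t_ee_{11}$ for $e\in\G^1$ and $S_{e_0}:=e_{21}$; in particular $P_{G^0}=e_{11}$ and $P_{v_0}=e_{22}$.

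First I would check that $\{S_e,P_X\}$ is a Cuntz--Krieger $\wt{\G}$-family. That $\{P_X\}$ is a representation of $\wt{\G}^0$ follows from $q_Aq_{A'}=q_{A\cap A'}$ and the orthogonality of $e_{11}$ and $e_{22}$; we have $S_e^*S_e=q_{r(e)}e_{11}=P_{r(e)}$ for $e\in\G^1$ and $S_{e_0}^*S_{e_0}=e_{11}=P_{G^0}=P_{\tilde r(e_0)}$; the range projections $S_eS_e^*=(t_et_e^*)e_{11}$ for $e\in\G^1$ lie in the $e_{11}$-corner and are mutually orthogonal by the $\G$-relations, while $S_{e_0}S_{e_0}^*=e_{22}$ is orthogonal to all of them; and $S_eS_e^*\le P_{s(e)}$ is immediate. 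For relation~(3) of Definition~\ref{dfn:CK-G-fam}, note that $v\wt{\G}^1=v\G^1$ for $v\in G^0$, so $\sum_{e\in v\wt{\G}^1}S_eS_e^*=\big(\sum_{e\in v\G^1}t_et_e^*\big)e_{11}=q_ve_{11}=P_v$ by the corresponding relation in $B$, while $\sum_{e\in v_0\wt{\G}^1}S_eS_e^*=S_{e_0}S_{e_0}^*=e_{22}=P_{v_0}$. Hence the universal property of $C^*(\wt{\G})$ gives a $*$-homomorphism $\pi\colon C^*(\wt{\G})\to M_2(\MU{B})$ sending the generators as above. Its image contains $be_{11}$ for every $b$ in the $C^*$-algebra generated by $\{q_A\}_{A\in\G^0}\cup\{t_e\}_{e\in\G^1}$, which is all of $B$, as well as $e_{11}=1\cdot e_{11}$ and $e_{21},e_{12},e_{22}$; multiplying these together shows the image contains $be_{ij}$ for all $b\in\MU{B}$ and all $i,j$, so $\pi$ is surjective.

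The main obstacle is injectivity of $\pi$. The gauge action $\gamma$ on $C^*(\wt{\G})$ fixes each $P_X$ and sends $S_e\mapsto zS_e$; I would match it with the action $\beta_z:=\Ad\big(\diag(1,z)\big)\circ(\mathrm{id}_{M_2}\otimes\gamma^B_z)$ on $M_2(\MU{B})$, where $\gamma^B$ is the gauge action on $B=C^*(\G)$, extended to $\MU{B}$ by fixing $1$. Evaluating on the images of the generators gives $\beta_z(t_ee_{11})=z\,t_ee_{11}$, $\beta_z(e_{21})=z\,e_{21}$, $\beta_z(q_Ae_{11})=q_Ae_{11}$, and $\beta_z(e_{22})=e_{22}$, so $\pi\circ\gamma_z=\beta_z\circ\pi$, and $\beta$ is plainly strongly continuous. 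Finally $\pi(P_X)\neq0$ for every nonempty $X\in\wt{\G}^0$: if $X=A\in\G^0$ then $\pi(P_A)=q_Ae_{11}\neq0$ because $q_A\neq0$ for nonempty $A$; if $X=G^0\setminus A$ then $\pi(P_X)=(1-q_A)e_{11}\neq0$ because any $v\in G^0\setminus A$ satisfies $q_Aq_v=q_\emptyset=0\neq q_v$, so $q_A\neq1$; and the two kinds of sets containing $v_0$ have a nonzero $e_{22}$-component. By the gauge-invariant uniqueness theorem for ultragraph $C^*$-algebras, $\pi$ is therefore injective, hence an isomorphism. I expect the only genuine bookkeeping to be the identification of $\wt{\G}^0$ and the verification that relation~(3) for vertices of $G^0$ corresponds, inside the corner $e_{11}M_2(\MU{B})e_{11}\cong\MU{B}$, to the analogous relation in $C^*(\G)$; the rest is a direct application of universal properties and the quoted uniqueness theorem.
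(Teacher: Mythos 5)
Your proposal is correct and follows essentially the same route as the paper: you exhibit exactly the same Cuntz--Krieger $\wt{\G}$-family in $M_2(\MU{C^*(\G)})$ (namely $p_A\mapsto q_Ae_{11}$, $s_e\mapsto t_ee_{11}$, $p_{G^0}\mapsto e_{11}$, $p_{v_0}\mapsto e_{22}$, $s_{e_0}\mapsto e_{21}$), invoke the universal property, check surjectivity on generators, and apply the Gauge-Invariant Uniqueness Theorem \cite[Theorem~6.8]{Tom}. The only difference is that you spell out the ``standard calculations'' the paper leaves implicit (the identification of $\wt{\G}^0$, the intertwined gauge action, and the nonvanishing of the projections), all of which are carried out correctly.
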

\begin{proof}
We first notice that the algebra $\wt{\G}^0$
is generated by the algebra $\G^0 \subseteq \Pow(\wt{G}^0)$
and the two elements $G_0, \{v_0\} \in \Pow(\wt{G}^0)$.
The universal property of $C^*(\wt{\G})$ implies that
there is a \shom
$\phi\colon C^*(\wt{\G}) \to M_2(\MU{C^*(\G)})$ satisfying
\[
\phi(p_A) = \left( \begin{smallmatrix} p_A & 0 \\ 0 & 0 \end{smallmatrix} \right) \text{ for all $A \in \G^0$}
\qquad \text{ and } \qquad
\phi(s_e) = \left( \begin{smallmatrix} s_e & 0 \\ 0 & 0 \end{smallmatrix} \right) \text{ for all $e \in \G^1$}
\]
and
\[
\phi(p_{G^0}) = \left( \begin{smallmatrix} 1 & 0 \\ 0 & 0 \end{smallmatrix} \right), \quad
\phi(p_{v_0}) = \left( \begin{smallmatrix} 0 & 0 \\ 0 & 1 \end{smallmatrix} \right), \  \text{ and } \
\phi(s_{e_0}) = \left( \begin{smallmatrix} 0 & 0 \\ 1 & 0 \end{smallmatrix} \right).
\]
The Gauge-Invariant Uniqueness Theorem \cite[Theorem~6.8]{Tom}
shows that $\phi$ is injective. Standard calculations show that
the image under $\phi$ of the generating Cuntz-Krieger
$\widetilde{\G}$-family in $C^*(\widetilde{\G})$
generates $M_2(\MU{C^*(\G)})$.
Hence $\phi$ is an isomorphism.
\end{proof}

\begin{corollary} \label{A-AF-M_2-AF}
Let $A$ be a $C^*$-algebra,
and let $\MU{A}$ denote the minimal unitization of $A$.
If $A$ is isomorphic to an Exel-Laca algebra,
then $M_2(\MU{A})$ is isomorphic to an Exel-Laca algebra.
\end{corollary}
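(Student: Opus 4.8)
The plan is to realize the given Exel-Laca algebra as the $C^*$-algebra of an ultragraph with bijective source map, apply Lemma~\ref{lem:M_2(C^*(G))}, and observe that the modification $\G \mapsto \wt{\G}$ appearing in that lemma preserves bijectivity of the source map. Throughout I would use the description recorded in Remark~\ref{rem:ELalgis,,,}: a $C^*$-algebra is isomorphic to an Exel-Laca algebra if and only if it is isomorphic to $C^*(\G)$ for some ultragraph $\G = (G^0, \G^1, r, s)$ whose source map $s$ is a bijection.

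Concretely, suppose $A$ is isomorphic to an Exel-Laca algebra, so by Remark~\ref{rem:ELalgis,,,} we have $A \cong C^*(\G)$ for an ultragraph $\G = (G^0, \G^1, r, s)$ with $s$ bijective (for instance $\G = \G_B$ if $A \cong \mathcal{O}_B$). Forming $\wt{\G} = (\wt{G}^0, \wt{\G}^1, \tilde{r}, \tilde{s})$ as in Lemma~\ref{lem:M_2(C^*(G))}, that lemma yields
\[
M_2(\MU{A}) \cong M_2\big(\MU{C^*(\G)}\big) \cong C^*\big(\wt{\G}\big).
\]
Now $\wt{\G}^1 = \G^1 \sqcup \{e_0\}$ and $\wt{G}^0 = G^0 \sqcup \{v_0\}$, and $\tilde{s}$ restricts to $s$ on $\G^1$ while sending the one extra ultraedge $e_0$ to the one extra vertex $v_0$; since $s$ is a bijection onto $G^0$, the map $\tilde{s}$ is a bijection onto $\wt{G}^0$. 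By the converse part of Remark~\ref{rem:ELalgis,,,}, $\wt{\G}$ is isomorphic to $\G_C$, where $C$ is the edge matrix of $\wt{\G}$, and this matrix has no identically zero rows: for every ultraedge $e$ of $\wt{\G}$ the range $\tilde{r}(e)$ is a nonempty subset of $\wt{G}^0 = \tilde{s}(\wt{\G}^1)$, so some ultraedge of $\wt{\G}$ has its source in $\tilde{r}(e)$. Hence $C^*(\wt{\G}) \cong \mathcal{O}_C$ is an Exel-Laca algebra, and therefore so is $M_2(\MU{A})$.

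There is no real obstacle here: the argument is just an assembly of Lemma~\ref{lem:M_2(C^*(G))} with the ultragraph characterization of Exel-Laca algebras in Remark~\ref{rem:ELalgis,,,}. The only point worth pausing on is that passing from $\G$ to $\wt{\G}$ keeps the source map bijective — which is immediate since it adjoins exactly one vertex and exactly one ultraedge attached to that vertex — together with the attendant observation that the edge matrix of an ultragraph with surjective source map automatically has no identically zero rows.
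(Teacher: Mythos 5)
Your proof is correct and follows exactly the paper's own route: realize $A \cong C^*(\G)$ for an ultragraph $\G$ with bijective source map via Remark~\ref{rem:ELalgis,,,}, apply Lemma~\ref{lem:M_2(C^*(G))}, and note that $\wt{\G}$ still has bijective source map. The extra check that the edge matrix of $\wt{\G}$ has no identically zero rows is a fine (and correct) elaboration of what the paper leaves implicit in the remark.
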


\begin{proof}
If $A$ is isomorphic to an Exel-Laca algebra,
then by Remark~\ref{rem:ELalgis,,,}
$A \cong C^*(\G)$ where $\G$ is an ultragraph
with bijective source map.
By Lemma~\ref{lem:M_2(C^*(G))}
$C^*(\wt{\G}) \cong M_2 (\MU{A})$, and since $\wt{\G}$ is
an ultragraph with bijective source map,
$C^*(\wt{\G})$ is an Exel-Laca algebra.
\end{proof}

The following example shows that the converse of
Corollary~\ref{no-nonzero-f-d-quotient-then-EL} does not hold.

\begin{example} \label{E-L-quotient-not-Ex}
Let $A$ be a nonunital, simple AF-algebra (such as $\K$).  By
Corollary~\ref{simple-AF-graph-EL} $A$ is isomorphic to an
Exel-Laca algebra, and by Corollary~\ref{A-AF-M_2-AF}
$M_2(\MU{A})$ is an Exel-Laca algebra. However, $M_2(\MU{A})$
has a quotient isomorphic to the finite-dimensional
$C^*$-algebra $M_2(\C)$.  Thus the converse of
Corollary~\ref{no-nonzero-f-d-quotient-then-EL} does not hold.
(It is also worth mentioning that $M_2(\C)$ is a quotient of an
Exel-Laca algebra, but $M_2(\C)$ is not itself an Exel-Laca
algebra; cf.~Corollary~\ref{No-fin-dim-E-L-Cor}.)
\end{example}

The following elementary example shows that the $C^*$-algebra
of a row-finite graph with sinks may admit unital quotients
(cf.~Theorem~\ref{no-unital-quotient-then-graph-alg}).

\begin{example}
The AF-algebra $M_2(\C) \oplus M_2(\C)$ is isomorphic to the
$C^*$-algebra of the graph $\bullet \longleftarrow \bullet
\longrightarrow \bullet$ by \cite[Corollary~2.3]{KPR}. However,
this $C^*$-algebra has $M_2(\C)$ as a unital quotient. Thus
graphs with sinks can have associated $C^*$-algebras that are
AF-algebras with proper unital quotients.
\end{example}

The next example is more intriguing. Before considering this
example, one is tempted to believe that if $E$ is a row-finite
graph, then $C^*(E)$ is isomorphic to a direct sum of a
countable collection of algebras of compact operators on
(finite or countably infinite dimensional) Hilbert spaces and
the $C^*$-algebra of a row-finite graph with no sinks (see
Proposition~\ref{prp:direct-sums-as-graph-algs}). This would
give a characterization of AF-algebras associated to row-finite
graphs along similar lines to
Theorem~\ref{no-unital-quotient-then-graph-alg}. However, the
example shows that this is not the case in general.

\begin{example}
Let $E$ be the graph
\[\begin{tikzpicture}[scale=1.5]
   \foreach \x in {1,2,3,4} {%
       \node[inner sep=1pt] (v\x) at (\x,1) {$v_{\x}$};%
       \node[inner sep=1pt] (w\x) at (\x,0) {$w_{\x}$};%
   }%
   \node[inner sep=3pt] (v5) at (5,1) {$\cdots$};%
   \node[inner sep=3pt] (w5) at (5,0) {$\cdots$};%
   \foreach \x/\xx in {1/2,2/3,3/4,4/5} {%
       \draw[-latex] (v\x)--(v\xx);%
       \draw[-latex] (v\x)--(w\x);%
   }%
\end{tikzpicture}\]
Then for each $n \in \N$ the set $H_n := \{ v_n, v_{n+1},
\ldots \} \cup \{ w_n, w_{n+1}, \ldots \}$ is a saturated
hereditary subset of $E$, and $C^*(E) / I_{H_n}$ is a
finite-dimensional $C^*$-algebra.  Thus $C^*(E)$ is an
AF-algebra with infinitely many finite-dimensional quotients.
This shows that, unlike what occurs for row-finite graphs with
no sinks (cf.~Theorem~\ref{no-unital-quotient-then-graph-alg}),
the situation with sinks is much more complicated.  It also
shows that $C^*(E)$ does not have a Bratteli diagram of the
types described in Lemma~\ref{unital-quotient-lem} or
Lemma~\ref{unital-quotient-char-lem}.  Hence our construction
of the ultragraph described in \S\ref{ultra-contruct-subsec}
cannot be applied.
\end{example}

By eliminating the bad behavior arising in the preceding
example, we obtain a limited extension of
Theorem~\ref{no-unital-quotient-then-graph-alg} to graphs
containing sinks.

\begin{proposition}\label{prp:direct-sums-as-graph-algs}
Let $A$ be an AF algebra. Then the following are equivalent:
\begin{enumerate}
\item\label{it:dir-sums:only-if} $A$ is isomorphic to the
  $C^*$-algebra of a row-finite graph in which each
  vertex connects to at most finitely many sinks; and
\item\label{it:dir-sums:if} $A$ has the form
  $\big(\bigoplus_{x \in X} M_{n_x}(\C)\big) \oplus A'$
  where $X$ is an at most countably-infinite index set,
  each $n_x$ is a positive integer, and $A'$ is an AF algebra
  with no unital quotients.
\end{enumerate}
\end{proposition}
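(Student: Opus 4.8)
The plan is to prove the two implications separately; ``condition~(2)$\Rightarrow$condition~(1)'' is a short construction, and the reverse is where the work lies. For the easy direction, use Theorem~\ref{no-unital-quotient-then-graph-alg} to write $A' \cong C^*(E')$ with $E'$ row-finite and sink-free; for each $x$ the finite line graph $L_{n_x}$ on vertices $u_1, \dots, u_{n_x}$ with edges $u_i \to u_{i+1}$ has $C^*(L_{n_x}) \cong M_{n_x}(\C)$ (\cite[Corollary~2.3]{KPR}) and every vertex of $L_{n_x}$ connects only to the sink $u_{n_x}$; then $E := E' \sqcup \bigsqcup_x L_{n_x}$ is a row-finite graph (as $X$ is at most countable) in which each vertex connects to at most one sink, and $C^*(E) \cong \big(\bigoplus_x M_{n_x}(\C)\big) \oplus A' \cong A$ since the $C^*$-algebra of a disjoint union is the ($c_0$-)direct sum of the pieces.

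For the main direction, write $A \cong C^*(E)$ with $E = (E^0,E^1,r,s)$ row-finite and each vertex connecting to finitely many sinks; as $A$ is AF, $E$ has no cycles \cite[Theorem~2.4]{KPR}. Let $W$ be the set of sinks, and set $Z := \{ w \in W : E^*w \text{ is finite}\}$, where $E^*w$ is the set of paths ending at $w$. A direct computation with the Cuntz--Krieger relations shows that for $w \in Z$ the ideal $I_{\{w\}}$ generated by $p_w$ equals $\spa\{ s_\mu s_\nu^* : \mu,\nu \in E^*w\}$, that $\{s_\mu s_\nu^*\}_{\mu,\nu \in E^*w}$ is a system of $n_w \times n_w$ matrix units with $n_w := |E^*w|$, and hence $I_{\{w\}} \cong M_{n_w}(\C)$; a second short computation gives $I_{\{w\}}I_{\{w'\}} = 0$ for distinct $w,w' \in Z$. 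Thus the $I_{\{w\}}$ ($w \in Z$) are mutually orthogonal gauge-invariant ideals, so $J := \overline{\sum_{w\in Z} I_{\{w\}}} \cong \bigoplus_{w \in Z} M_{n_w}(\C)$, and $J$ is the gauge-invariant ideal $I_{\overline Z}$ corresponding to the saturation $\overline Z$ of $Z$.

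Next I would split $J$ off. Set $H' := \{ v \in E^0 : v \not\geq u \text{ for all } u \in \overline Z \}$; this is readily checked to be saturated hereditary and disjoint from $\overline Z$, and --- the crux --- satisfies $\overline{H' \cup \overline Z} = E^0$. Granting this, $I_{H'} \cap J = 0$ and $I_{H'} + J = C^*(E)$, so $C^*(E) = J \oplus I_{H'}$ and $I_{H'} \cong C^*(E)/I_{\overline Z} \cong C^*(E \setminus \overline Z)$ (cf.\ \cite[Theorem~4.1]{BPRS} and the proof of Theorem~\ref{no-unital-quotient-then-graph-alg}). To prove $\overline{H' \cup \overline Z} = E^0$: every sink of $E$ lies in $K := \overline{H' \cup \overline Z}$ (a sink outside $\overline Z$ reaches no vertex of $\overline Z$, hence lies in $H'$), so from any $v \notin K$, using that a non-sink vertex outside a saturated set emits an edge staying outside it, one builds an infinite path $v = v^{(0)} \to v^{(1)} \to \cdots$ in $E^0 \setminus K$; each $v^{(i)} \notin H'$ reaches $\overline Z$, and since $\overline Z \subseteq \overline W \subseteq \{\text{vertices with no infinite path}\}$ (the last set being saturated hereditary and containing $W$), König's lemma shows each $v^{(i)}$ reaches a sink in $Z$; as $v^{(0)}$ reaches only finitely many sinks, some fixed $w \in Z$ is reached from infinitely many $v^{(i)}$, and concatenating the corresponding segments of the infinite path with paths into $w$ yields infinitely many elements of $E^*w$, contradicting $w \in Z$.

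It remains to check that $A' := C^*(E \setminus \overline Z)$ has no nonzero unital quotients, whence $A = J \oplus A' \cong \big(\bigoplus_{w \in Z} M_{n_w}(\C)\big) \oplus A'$ is as required ($Z$ is at most countable since $E^0$ is). A quotient of $C^*(E\setminus\overline Z)$ is the $C^*$-algebra of the subgraph on $(E^0 \setminus \overline Z)\setminus H$ for some saturated hereditary $H$, and is unital precisely when that vertex set is finite, so it suffices to show no proper saturated hereditary subset of $E \setminus \overline Z$ has finite complement. Every sink of $E\setminus\overline Z$ is a sink $w$ of $E$ with $w \notin Z$; since $\overline Z$ is hereditary, every path in $E$ ending at such a $w$ already lies in $E \setminus \overline Z$, so $E^*w$ is infinite. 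A finite nonempty complement, being a finite acyclic graph, would contain a sink $v_0$ of that subgraph; its edges in $E \setminus \overline Z$ all land in $H$, so saturation forces $v_0$ to be a sink of $E \setminus \overline Z$ --- yet the infinitely many paths ending at $v_0$ have all their vertices in the finite complement (by hereditariness of $H$), which is impossible. I expect $\overline{H' \cup \overline Z} = E^0$ to be the main obstacle: it is the sole point where the ``finitely many sinks'' hypothesis is used, and the infinite-path/pigeonhole bookkeeping --- in particular verifying that each $v^{(i)}$ reaches a sink lying in $Z$ --- requires care.
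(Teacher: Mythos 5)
Your proposal is correct, and while the easy direction (2)$\Rightarrow$(1) coincides with the paper's, your argument for (1)$\Rightarrow$(2) takes a genuinely different route. The paper splits off \emph{all} sinks at once: it forms the smallest saturated hereditary set $H$ containing $\sinks(E)$, lets $F$ be the sink-free complementary graph, and proves $C^*(E) \cong \big(\bigoplus_{w \in \sinks(E)} \K(\ell^2(E^*w))\big) \oplus C^*(F)$ by writing down an explicit Cuntz--Krieger $E$-family in that direct sum and invoking the Gauge-Invariant Uniqueness Theorem; the summands $\K(\ell^2(E^*w))$ with $E^*w$ infinite are then absorbed into $A'$ together with $C^*(F)$, whose lack of unital quotients comes straight from Theorem~\ref{no-unital-quotient-then-graph-alg}. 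You instead work entirely in the lattice of gauge-invariant ideals: you peel off only the sinks $w$ with $|E^*w|<\infty$, identify $I_{\overline{Z}}$ with $\bigoplus_{w\in Z} M_{n_w}(\C)$ via matrix units, exhibit a complementary saturated hereditary set $H'$, and verify directly that the leftover graph algebra $C^*(E\setminus\overline{Z})$ (which may still have sinks, all with infinitely many incoming paths) has no unital quotients. In both arguments the ``finitely many sinks'' hypothesis does the same combinatorial work, but in different places: the paper uses it to keep the candidate projections $q_v$ supported on finitely many $\K$-summands, while you use it in the K\H{o}nig/pigeonhole argument that $\overline{H'\cup\overline{Z}}=E^0$. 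What the paper's route buys is an explicit isomorphism on generators and a structurally cleaner $A'$; what yours buys is that you never form the sums $\sum_{\alpha\in vE^*w}\Theta^w_{\alpha,\alpha}$, which for $|vE^*w|=\infty$ are not compact operators --- a point the paper's construction passes over --- since every matrix-unit computation you make happens inside a finite-dimensional ideal. The one step you should spell out is the passage from ``$v^{(i)}$ reaches a vertex of $\overline{Z}$, which reaches a sink'' to ``that sink lies in $Z$'': this holds because the saturation process only adjoins vertices emitting at least one edge, so $\overline{Z}\cap\sinks(E)=Z$; with that line added, the argument is complete.
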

\begin{proof}
To see that
(\ref{it:dir-sums:only-if})~implies~(\ref{it:dir-sums:if}), we
let $E$ be a row-finite graph in which each vertex connects to
at most finitely many sinks and such that $A \cong C^*(E)$.
Since $A$ is an AF-algebra, $E$ has no cycles. Let $\sinks(E)$
denote the collection $\{v \in E^0 : vE^1 = \emptyset\}$ of
sinks in $E$. Let $H$ be the smallest saturated hereditary
subset of $E^0$ containing $\sinks(E)$. Since each vertex
connects to at most finitely many sinks, $H$ is equal to the
set of $v \in E^0$ such that $vE^n = \emptyset$ for some $n$.
Let $F$ be the graph with vertices $F^0 := E^0 \setminus H$,
edges $F^1 = \{e \in E^1 : r(e) \not\in H\}$ and range and
source maps inherited from $E$. Note that the description of
$H$ above implies that $F$ has no sinks; moreover $F$ is
row-finite because $E$ is. We claim that
\[\textstyle
C^*(E) \cong \big(\bigoplus_{v \in \sinks(E)} \K(\ell^2(E^* v))\big) \oplus C^*(F).
\]
To prove this, we first define a Cuntz-Krieger $E$-family
$\{q_v : v \in E^0\}$, $\{t_e : e \in E^1\}$ in
$\big(\bigoplus_{v \in \sinks(E)} \K(\ell^2(E^* v))\big) \oplus
C^*(F)$. We will denote the universal Cuntz-Krieger $F$-family
by $\{p^F_v : v \in F^0\}$, $\{s^F_e : e \in F^1\}$, and we
will denote the matrix units in each $\K(\ell^2(E^* v))$ by
$\{\Theta^v_{\alpha,\beta} : \alpha,\beta \in E^* v\}$. As a
notational convenience, for $v \in E^0 \setminus F^0$, we write
$p^F_v = 0$, and similarly for $e \in E^1 \setminus F^1$, we
write $s^F_e = 0$. For $v \in E^0$, let
\[\textstyle
q_v := \Big(\bigoplus_{w \in \sinks(E)} \sum_{\alpha \in v E^* w} \Theta^w_{\alpha,\alpha}\Big) \oplus p^F_v
\]
and for $e \in E^1$, let
\[\textstyle
t_e := \Big(\bigoplus_{w \in \sinks(E)} \sum_{\alpha \in r(e) E^* w} \Theta^v_{e\alpha,\alpha}\Big) \oplus s^F_e.
\]
Routine calculations show that $\{q_v : v \in E^0\}$, $\{t_e :
e \in E^1\}$ is a Cuntz-Krieger $E$-family. This family clearly
generates $\big(\bigoplus_{v \in \sinks(E)} \K(\ell^2(E^*
v))\big) \oplus C^*(F)$, and each $q_v$ is nonzero because if
$p^F_v = 0$ then $v$ must connect to a sink $w$ in which case
$q_v$ dominates some $\Theta^w_{\alpha,\alpha}$. An application
of the Gauge-Invariant Uniqueness Theorem
\cite[Theorem~2.1]{BPRS} implies that there is an isomorphism
\[\textstyle
\pi_{q,t} \colon C^*(E) \to \Big(\bigoplus_{v \in \sinks(E)} \K(\ell^2(E^*
v))\Big) \oplus C^*(F)
\]
such that $\pi_{q,t}(p_v) = q_v$ and $\pi_{q,t}(s_e) = t_e$.

To complete the proof of
(\ref{it:dir-sums:only-if})~implies~(\ref{it:dir-sums:if}), let
$X \subset \sinks(E)$ denote the subset $\{v \in \sinks(E) :
|E^* v| < \infty\}$,
and for each $v \in X$ let $n_v := |E^* v|$.
We have $\K(\ell^2(E^*v)) = M_{n_v}(\C)$ for each $v \in X$.
Recall that $F$ is row-finite and has no sinks, so
Theorem~\ref{no-unital-quotient-then-graph-alg} implies that
$C^*(F)$ has no unital quotient. For each $v \in \sinks(E)
\setminus X$, the $C^*$-algebra $\K(\ell^2(E^* v))$ is simple
and nonunital. Thus
\[\textstyle
A' := \Big(\bigoplus_{v \in \sinks(E) \setminus X} \K(\ell^2(E^* v))\Big) \oplus C^*(F')
\]
has no finite-dimensional quotients.
We get
\[\textstyle
A \cong C^*(E)
\cong \Big(\bigoplus_{v \in \sinks(E)} \K(\ell^2(E^*v))\Big) \oplus C^*(F)
\cong \Big(\bigoplus_{v \in X} M_{n_v}(\C)\Big) \oplus A'
\]
as required.

To see that
(\ref{it:dir-sums:if})~implies~(\ref{it:dir-sums:only-if}), let
$A = \big(\bigoplus_{x \in X} M_{n_x}(\C) \big) \oplus A'$ as
in~(\ref{it:dir-sums:if}). By
Theorem~\ref{no-unital-quotient-then-graph-alg}, there is a
row-finite graph $E'$ with no sinks such that $C^*(E') \cong
A'$. For each $x \in X$, let $E_x$ be a copy of the graph
\[\begin{tikzpicture}[xscale=1.5]
   \node[inner sep=1pt] (v1) at (0,0) {$v_1$};
   \node[inner sep=1pt] (v2) at (1,0) {$v_2$};
   \node[inner sep=3pt] (dots) at (2,0) {$\cdots$};
   \node[inner sep=1pt] (vn) at (3,0) {$v_{n_x}$};
   \draw[-latex] (v1)--(v2);%
   \draw[-latex] (v2)--(dots);%
   \draw[-latex] (dots)--(vn);%
\end{tikzpicture}\]
A standard argument shows that $C^*(E_x) \cong
M_{n_x}(\C)$. Moreover $E := \big(\bigsqcup_{x \in X} E_x\big)
\sqcup E'$ satisfies
\[\textstyle
C^*(E) \cong \Big(\bigoplus_{x \in X} C^*(E_x)\Big) \oplus C^*(E') \cong A
\]
as required.
\end{proof}

For completeness, we conclude the section with the following
well-known result.

\begin{lemma}\label{lem:fin-dim-graph-alg}
A $C^*$-algebra $A$ is finite dimensional if and only if it is
isomorphic to the $C^*$-algebra of a finite directed graph with
no cycles.
\end{lemma}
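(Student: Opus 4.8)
The plan is to prove the two implications separately. For the forward implication, suppose $A$ is finite dimensional, so that $A \cong \bigoplus_{i=1}^m M_{n_i}(\C)$ for some $m \in \N$ and positive integers $n_1, \dots, n_m$. For each $i$ let $E_i$ be a copy of the finite directed graph $v_1 \to v_2 \to \cdots \to v_{n_i}$ (a single edge $v_j \to v_{j+1}$ for each $1 \le j \le n_i - 1$); this graph plainly has no cycles, and the same standard argument used in the proof of Proposition~\ref{prp:direct-sums-as-graph-algs} shows that $C^*(E_i) \cong M_{n_i}(\C)$. Then $E := \bigsqcup_{i=1}^m E_i$ is a finite directed graph with no cycles, and since the $C^*$-algebra of a finite disjoint union of graphs is the direct sum of the graph $C^*$-algebras of the pieces, we obtain $C^*(E) \cong \bigoplus_{i=1}^m C^*(E_i) \cong \bigoplus_{i=1}^m M_{n_i}(\C) \cong A$.

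For the reverse implication, let $E = (E^0, E^1, r, s)$ be a finite directed graph with no cycles, and write $\{p_v : v \in E^0\}$, $\{s_e : e \in E^1\}$ for a generating Cuntz--Krieger $E$-family in $C^*(E)$, extending the notation by $s_\alpha := s_{e_1} \cdots s_{e_n}$ for a path $\alpha = e_1 \cdots e_n \in E^*$ and $s_v := p_v$ for $v \in E^0$. It is a standard fact about graph $C^*$-algebras that $C^*(E) = \overline{\operatorname{span}}\{ s_\alpha s_\beta^* : \alpha, \beta \in E^*,\ r(\alpha) = r(\beta) \}$. Because $E$ is finite and has no cycles, every path in $E$ has length at most $|E^0| - 1$ (a longer path would repeat a vertex and hence contain a cycle), so $E^*$ is a finite set and the spanning set above is finite. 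A finite-dimensional subspace of a Banach space is closed, so $\operatorname{span}\{ s_\alpha s_\beta^* : \alpha, \beta \in E^*,\ r(\alpha) = r(\beta) \} = C^*(E)$, and therefore $C^*(E)$ is finite dimensional.

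The only point in the argument requiring any care is the identity $C^*(E) = \overline{\operatorname{span}}\{ s_\alpha s_\beta^* : r(\alpha) = r(\beta)\}$, which is exactly where the ``no cycles'' hypothesis combines with finiteness of $E$ to force finite-dimensionality; this identity is well known (it follows by repeatedly applying the Cuntz--Krieger relations to reduce an arbitrary word in the generators and their adjoints to the stated normal form), so I do not anticipate a genuine obstacle. An alternative route for the reverse implication would be to observe that $C^*(E)$ is AF by \cite[Theorem~2.4]{KPR} and then build a finite Bratteli diagram for it by ordering the vertices of $E$ along a topological sort and distributing edges across consecutive levels; but the spanning-set count above is shorter and avoids the bookkeeping about sinks.
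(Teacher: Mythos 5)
Your proof is correct and follows essentially the same route as the paper's: the paper also deduces the reverse implication from $C^*(E)=\cspa\{s_\mu s_\nu^* : \mu,\nu\in E^*\}$ together with finiteness of $E^*$, and for the forward implication it likewise reduces to $\bigoplus_i M_{n_i}(\C)$ and realizes each summand via the line-graph construction (citing \cite[Corollary~2.3]{KPR} and the end of the proof of Proposition~\ref{prp:direct-sums-as-graph-algs}). Your added details (the bound $|E^0|-1$ on path lengths and the closedness of finite-dimensional subspaces) are fine.
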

\begin{proof}
If $E$ is a finite directed graph with no cycles, then $E^*$ is
finite, and hence $C^*(E) = \cspa\{s_\mu s^*_\nu : \mu,\nu \in
E^*\}$ is finite dimensional.

On the other hand, if $A$ is finite-dimensional, then there
exist an integer $n \ge 1$ and nonnegative integers $d_1,
\dots, d_n$ such that $A \cong \bigoplus^n_{i=1} M_{d_i}(\C)$,
and \cite[Corollary~2.3]{KPR} then implies that $A$ is
isomorphic to the $C^*$-algebra of a finite directed graph with
no cycles.  (Moreover, we remark that the last part of the proof
of Proposition~\ref{prp:direct-sums-as-graph-algs} actually shows that every finite-dimensional
$C^*$-algebra is the $C^*$-algebra of a finite graph with no cycles.)
\end{proof}

\subsection{Obstructions to realizations} \label{necessary-cond-subsec}

Here we present a number of necessary conditions for an AF
algebra to be an ultragraph $C^*$-algebra, an Exel-Laca
algebra, or a graph $C^*$-algebra.
Recall that an ultragraph $C^*$-algebra $C^*(\G)$ is an AF-algebra
if and only if $\G$ has no cycles by \cite[Theorem~4.1]{Tom2}.

\begin{proposition} \label{prop:commutatuveUGA}
Let $\G$ be an ultragraph and suppose that $C^*(\G)$ is an
AF-algebra.  If $C^*(\G)$ is commutative, then the ultragraph
$\G$ has no ultraedges, and $C^*(\G) \cong c_0 (G^0)$.
\end{proposition}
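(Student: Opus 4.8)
The plan is to show that the hypotheses force $\G$ to have no ultraedges; once that is established, the conclusion is immediate. Indeed, if $\G^1=\emptyset$ then $\G^0$ is just the algebra of finite subsets of $G^0$, relation~(3) of Definition~\ref{dfn:CK-G-fam} is vacuous (no vertex emits an ultraedge), and a representation of $\G^0$ is nothing but a family $\{p_{\{v\}}\}_{v\in G^0}$ of mutually orthogonal projections together with $p_A=\sum_{v\in A}p_{\{v\}}$. Since the canonical generators of an ultragraph $C^*$-algebra are nonzero (a standard faithfulness fact, see \cite{Tom}), $C^*(\G)=\cspa\{p_{\{v\}}:v\in G^0\}$ is the $c_0$-direct sum of copies of $\C$ indexed by $G^0$, i.e. $C^*(\G)\cong c_0(G^0)$.

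So suppose, toward a contradiction, that there is an ultraedge $e\in\G^1$. Because $C^*(\G)$ is commutative, $s_e$ commutes with $s_e^*$, hence $s_e$ is normal and $s_es_e^*=s_e^*s_e=p_{r(e)}$ by relation~(1) of Definition~\ref{dfn:CK-G-fam}; in particular $|s_e|=p_{r(e)}$. Write $C^*(\G)\cong C_0(X)$ for a locally compact Hausdorff space $X$; since $C^*(\G)$ is an AF-algebra it has real rank zero, so $X$ is totally disconnected. The gauge action $\gamma\colon\T\to\aut(C^*(\G))$, determined by $\gamma_z(p_A)=p_A$ and $\gamma_z(s_e)=zs_e$, is implemented by homeomorphisms $h_z$ of $X$ depending continuously on $z$, so that $\gamma_z(f)=f\circ h_z^{-1}$ for $f\in C_0(X)$.

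Now regard $s_e$ as a function in $C_0(X)$. Since $|s_e|=p_{r(e)}$ is the indicator of the compact-open set $U:=\{x\in X:p_{r(e)}(x)=1\}$, which is nonempty because $r(e)\neq\emptyset$ forces $p_{r(e)}\neq 0$, the function $s_e$ maps $U$ into the unit circle $\T$. Fix $x_0\in U$. The identity $\gamma_z(s_e)=zs_e$ reads $s_e(h_z^{-1}(x_0))=z\,s_e(x_0)$ for all $z\in\T$, and as $s_e(x_0)\in\T$ the map $z\mapsto s_e(h_z^{-1}(x_0))$ is injective on $\T$; hence $z\mapsto h_z^{-1}(x_0)$ is a continuous injection of the compact space $\T$ into the Hausdorff space $X$, so its image is a subset of $X$ homeomorphic to $\T$. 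This contradicts total disconnectedness of $X$. Therefore $\G^1=\emptyset$, and the first paragraph completes the proof.

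I expect the crux to be the argument of the last two paragraphs: one must note carefully that commutativity makes $s_e$ normal so that $|s_e|$ is a projection, set up correctly the (contravariant) dictionary between the gauge action and a $\T$-action on the spectrum $X$, and invoke that the spectrum of a commutative AF-algebra is totally disconnected. Granting faithfulness of the canonical generating family of $C^*(\G)$, everything else — the $\G^1=\emptyset$ reduction and the identification with $c_0(G^0)$ — is routine.
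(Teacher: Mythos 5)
Your argument is correct, but it takes a genuinely different and considerably heavier route than the paper. The paper's proof is three lines: if $e\in\G^1$, commutativity gives $p_{r(e)} = s_e^*s_e = s_es_e^* \le p_{s(e)}$ (combining relations (1) and (2) of Definition~\ref{dfn:CK-G-fam}), and since $p_A\neq 0$ for every nonempty $A\in\G^0$ this forces $r(e)=\{s(e)\}$; thus $e$ is a cycle, contradicting the fact that an ultragraph $C^*$-algebra is AF if and only if the ultragraph has no cycles \cite[Theorem~4.1]{Tom2}. You never invoke relation (2) or this cycle characterization; instead you encode the AF hypothesis as total disconnectedness of the spectrum and derive the contradiction by using the gauge action to embed a copy of $\T$ into that spectrum. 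Some use of the AF hypothesis is unavoidable (a single loop at a single vertex gives the commutative non-AF algebra $C(\T)$), and your steps all check out: normality of $s_e$ gives $|s_e|=p_{r(e)}$, nontriviality of the generators gives $U\neq\emptyset$, strong continuity of the gauge action gives continuity of $z\mapsto h_z^{-1}(x_0)$, and the equation $s_e(h_z^{-1}(x_0))=z\,s_e(x_0)$ with $|s_e(x_0)|=1$ gives injectivity. What your approach buys is independence from the combinatorial AF criterion of \cite{Tom2}; what it costs is the machinery of the gauge action, real rank zero, and point-set topology where a one-line computation with the defining relations suffices. The reduction to showing $\G^1=\emptyset$ and the identification $C^*(\G)\cong c_0(G^0)$ are handled essentially as in the paper.
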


\begin{proof}
It suffices to show that $\G$ has no ultraedges.
Suppose that $e$ is an ultraedge in $\G$, and let $v = s(e)$.
Since $C^*(\G)$ is commutative, we have $p_{r(e)} = s_e^*s_e =
s_es_e^* \leq p_{s(e)}$, and hence $r(e) = \{s(e)\}$.
Thus $e$ is a cycle.
This contradicts the hypothesis that $C^*(\G)$
is an AF-algebra.
\end{proof}

\begin{proposition}\label{prp:ELobstruction}
Let $A$ be an AF-algebra that is also an Exel-Laca algebra.
Then $A$ does not have a quotient isomorphic to $\C$,
and for each $n \in \N$ there is a $C^*$-subalgebra of $A$
isomorphic to $M_n(\C)$.
\end{proposition}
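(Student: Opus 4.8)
The plan is to work with the Exel-Laca / ultragraph description of $A$. Write $A\cong\mathcal{O}_B$ for a $\{0,1\}$-matrix $B$ over a countable set $I$ with no identically zero rows, so that $A$ is generated by partial isometries $\{s_i:i\in I\}$ with $s_i^*s_is_js_j^*=B(i,j)s_js_j^*$; since $A$ is AF, $\mathcal{G}_B$ has no cycles, and in particular no cycle of length one, so $B(i,i)=0$ for every $i$. For the first assertion, suppose toward a contradiction that $\pi\colon A\to\C$ is a surjective \shom. Applying $\pi$ to the relation $s_i^*s_is_is_i^*=B(i,i)s_is_i^*=0$ gives $|\pi(s_i)|^4=0$, whence $\pi(s_i)=0$, for every $i\in I$; as the $s_i$ generate $A$ this forces $\pi=0$, which is absurd. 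Hence $A$ has no quotient isomorphic to $\C$.

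For the second assertion I will use (Remark~\ref{rem:ELalgis,,,}) the form $A\cong C^*(\G)$ for an ultragraph $\G=(G^0,\G^1,r,s)$ with $s$ bijective; writing $e_v$ for the unique edge with source $v$, the ultragraph relations give $s_{e_v}s_{e_v}^*=p_v$ (using $p_v=\sum_{f\in v\G^1}s_fs_f^*$) and $s_{e_v}^*s_{e_v}=p_{r(e_v)}$, and the absence of cycles says $v\notin r(e_v)$. Fix $n\in\N$. Since each $r(e_v)$ is nonempty I may pick $v_0\in G^0$ and then $v_{k+1}\in r(e_{v_k})$ for $k=0,\dots,n-1$; because $\G$ has no cycles the vertices $v_0,\dots,v_n$ are pairwise distinct. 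Put $t_i:=s_{e_{v_i}}s_{e_{v_{i+1}}}\cdots s_{e_{v_{n-1}}}$ for $0\le i\le n-1$ --- these are the partial isometries attached to the paths $v_i\to v_{i+1}\to\cdots\to v_n$. A telescoping calculation using $s_{e_{v_k}}^*s_{e_{v_k}}=p_{r(e_{v_k})}$ together with $p_{r(e_{v_k})}s_{e_{v_{k+1}}}=s_{e_{v_{k+1}}}$ (valid since $v_{k+1}\in r(e_{v_k})$) shows $t_i^*t_i=p_{r(e_{v_{n-1}})}\neq 0$ for every $i$; and for $i\neq j$, since $v_i\neq v_j$ the edges $e_{v_i},e_{v_j}$ are distinct and so have orthogonal ranges, so the inner factor $s_{e_{v_i}}^*s_{e_{v_j}}$ of $t_i^*t_j$ vanishes, giving $t_i^*t_j=0$. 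Thus the $t_i$ are partial isometries with common initial projection and pairwise orthogonal range projections, so $\{t_it_j^*:0\le i,j\le n-1\}$ is a system of matrix units and $\operatorname{span}\{t_it_j^*:0\le i,j\le n-1\}$ is a $C^*$-subalgebra of $A$ isomorphic to $M_n(\C)$.

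I expect the telescoping identities and the check that the $t_it_j^*$ are matrix units to be routine manipulations with the ultragraph relations; the one point that requires care is the vanishing $t_i^*t_j=0$ for $i\neq j$, and the key observation there is that the chosen paths share the endpoint $v_n$ but have distinct initial vertices --- it is precisely the no-cycle hypothesis that guarantees the $v_i$ are distinct (and, more generally, that would prevent one of two distinct paths with a common endpoint from being a tail of the other). The main conceptual step in the second part is therefore recognizing that the matrix units should be built from a family of paths with a common endpoint and varying lengths rather than, say, from a single chain; the first part, by contrast, is immediate once one notes that AF-ness forces the diagonal of $B$ to vanish.
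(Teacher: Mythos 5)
Your proof is correct and follows essentially the same route as the paper's: in both arguments the absence of loops (i.e.\ $B(i,i)=0$, equivalently $s(e)\notin r(e)$) is what kills any character, and a chain of $n$ distinct vertices produces $n$ nested paths whose associated partial isometries have a common initial projection and orthogonal ranges, hence generate a copy of $M_n(\C)$. The only cosmetic differences are that the paper runs the first part in the ultragraph picture (choosing $v$ with $\chi(p_v)\neq 0$ and deriving $\chi(p_v)=0$) rather than directly from the matrix relation, and its matrix units $\Theta_{i,j}$ are cut down by $p_{v_n}$ instead of by the larger projection $p_{r_{\G}(e_{v_{n-1}})}$; both variants work.
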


\begin{proof}
There exists an ultragraph $\G = (G^0, \G^1, r, s)$
with bijective $s$ such that $C^*(\G) \cong A$
(see Remark~\ref{rem:ELalgis,,,}).
The ultragraph $\G$ has no cycles.
Let $\{p_v\}_{v\in G^0}$ and $\{s_e\}_{e \in \G^1}$
be the generator of $C^*(\G)$
as in Definition~\ref{dfn:CK-G-fam}.

Suppose, for the sake of contradiction, that
there exists a nonzero \shom $\chi \colon C^*(\G) \to \C$.
Since $\chi$ is nonzero, there exists $v \in G^0$
with $\chi(p_v)\neq 0$. Let $e\in \G^1$ be the unique
ultraedge with $s(e)=v$. Since $\G$ has no cycles, we
have $v \notin r(e)$. Hence $p_v$ is orthogonal to $s_e^*s_e$.
Thus $$| \chi (s_e) |^2 \chi (p_v) = \overline{\chi (s_e)}
\chi(s_e) \chi (p_v) = \chi (s_e^* s_e p_v) = 0,$$ and since
$\chi(p_v)\neq 0$, it follows that $| \chi (s_e) |^2 = 0$ and
$\chi (s_e) = 0$.  But then $\chi (p_v) = \chi (s_es_e^*) =
\chi(s_e) \chi (s_e^*) = 0$, which is a contradiction.  Hence
$C^*(\G)$ has no quotients isomorphic to $\C$.

Let $n\in \N$.  We will construct a $C^*$-subalgebra of
$C^*(\G)$ isomorphic to $M_n(\C)$. Choose $v_1 \in G^0$ and let
$e_1 \in \G^1$ be the unique ultraedge with $s(e_1) = v_1$.
Then choose a vertex $v_2 \in r(e_1)$. Since $\G$ has no cycles,
we have $v_2 \neq v_1$. Continuing in this manner, we
can find distinct vertices $v_1, v_2, \ldots, v_{n} \in G^0$
such that $v_{k+1} \in r(e_k)$ for $k = 1, 2, \ldots , n-1$,
where $e_k \in \G^1$ is the unique ultraedge with $s(e_k) =
v_k$.
For $1 \leq i,j \leq n$, we define
\[
\Theta_{i,j} := s_{e_i} s_{e_{i+1}} \ldots s_{e_{n-1}}
          p_{v_n} s_{e_{n-1}}^* s_{e_{n-2}}^* \ldots s_{e_j}^*.
\]
One can check that $\{ \Theta_{i,j} : 1 \leq i,j \leq n \}$
is a family of matrix units, and thus the $C^*$-subalgebra of $C^*(\G)$
generated by $\{ \Theta_{i,j} : 1 \leq i,j \leq n \}$
is isomorphic to $M_n(\C)$.
\end{proof}

\begin{corollary}
If $A$ is an AF-algebra that is also an Exel-Laca algebra, then
$A$ has a Bratteli diagram $(E, d)$ such that $d_v \geq 2$ for
all $v \in E^0$.
\end{corollary}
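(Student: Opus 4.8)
The plan is to deduce this statement directly from the two results already established: Proposition~\ref{prp:ELobstruction} and Lemma~\ref{fin-quotient-lem-1}.

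First I would invoke Proposition~\ref{prp:ELobstruction}. Since $A$ is an AF-algebra that is also an Exel-Laca algebra, that proposition applies and tells us in particular that $A$ has no quotient isomorphic to $\C$. (The other half of that proposition, concerning the existence of a copy of $M_n(\C)$ inside $A$ for every $n$, is not needed here; only the ``no quotient $\cong \C$'' conclusion is relevant.)

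Next, since $A$ is an AF-algebra it admits some Bratteli diagram $(E,d)$ --- any direct-limit decomposition $A = \overline{\bigcup_n A_n}$ into finite-dimensional subalgebras supplies one, with the edge set determined by the multiplicity matrices of the inclusions $A_n \hookrightarrow A_{n+1}$. I would then apply Lemma~\ref{fin-quotient-lem-1} to $(E,d)$: setting $H := \{v \in E^0 : d_v = 1\}$ and letting $F$ be the subgraph on $F^0 := E^0 \setminus H$ with $F^1 := \{e \in E^1 : s(e) \notin H\}$ and $r,s$ inherited from $E$, the lemma --- whose hypothesis ``$A$ has no quotient isomorphic to $\C$'' we verified in the previous step --- guarantees that $(F, d|_{F^0})$ is again a Bratteli diagram for $A$. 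Since every vertex $v \in F^0$ lies outside $H$, we have $d_v \geq 2$ for all $v \in F^0$, which is exactly the desired conclusion.

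I do not anticipate any genuine obstacle; the only thing to be careful about is that the hypothesis of Lemma~\ref{fin-quotient-lem-1} is precisely the conclusion of Proposition~\ref{prp:ELobstruction}, so the two slot together with nothing further to check. If one wanted a self-contained argument not citing Proposition~\ref{prp:ELobstruction} wholesale, the only ingredient actually used from its proof is that a nonzero character $\chi\colon C^*(\G) \to \C$ on the Exel-Laca algebra $C^*(\G)$ (with $\G$ having bijective source map and no cycles) cannot exist, since $\chi(p_v)\neq 0$ forces $\chi(s_e)=0$ for the unique $e$ with $s(e)=v$ and hence the contradiction $\chi(p_v)=\chi(s_es_e^*)=0$; but reusing Proposition~\ref{prp:ELobstruction} directly is cleaner.
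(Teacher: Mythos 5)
Your proposal is correct and follows exactly the paper's argument: the paper likewise cites Proposition~\ref{prp:ELobstruction} for the fact that $A$ has no quotient isomorphic to $\C$ and then immediately invokes Lemma~\ref{fin-quotient-lem-1} to delete the vertices with $d_v = 1$. Nothing is missing; your observation that the lemma's hypothesis is precisely the proposition's conclusion is the whole content of the proof.
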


\begin{proof}
Since $A$ has no quotient isomorphic to $\C$, the result
follows from Lemma~\ref{fin-quotient-lem-1}.
\end{proof}

\begin{corollary} \label{No-fin-dim-E-L-Cor}
No finite-dimensional $C^*$-algebra is isomorphic to an
Exel-Laca algebra.
\end{corollary}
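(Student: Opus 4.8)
The plan is to deduce this immediately from Proposition~\ref{prp:ELobstruction}. First I would observe that every finite-dimensional $C^*$-algebra is in particular an AF-algebra, being (trivially) the direct limit of the constant sequence of copies of itself. So if some finite-dimensional $C^*$-algebra $A$ were isomorphic to an Exel-Laca algebra, then $A$ would be an AF-algebra that is also an Exel-Laca algebra, and Proposition~\ref{prp:ELobstruction} would apply to it.

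Next I would extract from Proposition~\ref{prp:ELobstruction} the conclusion that for every $n \in \N$ there is a $C^*$-subalgebra of $A$ isomorphic to $M_n(\C)$. Writing $A \cong \bigoplus_{i=1}^m M_{d_i}(\C)$, the complex dimension of $A$ is the finite number $\sum_{i=1}^m d_i^2$, whereas $\dim_\C M_n(\C) = n^2$. Since any $C^*$-subalgebra of $A$ is a linear subspace of $A$ and hence has dimension at most $\dim_\C A$, this forces $n^2 \le \sum_{i=1}^m d_i^2$ for all $n \in \N$, which is absurd. Therefore no finite-dimensional $C^*$-algebra can be isomorphic to an Exel-Laca algebra.

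There is essentially no obstacle here: the substance of the statement is carried entirely by Proposition~\ref{prp:ELobstruction}, and what remains is the elementary observation that a finite-dimensional algebra cannot contain matrix subalgebras of arbitrarily large size. One could equally phrase the contradiction in terms of the maximal number $\sum_{i=1}^m d_i$ of mutually orthogonal nonzero projections available in $A$, since $M_n(\C)$ contains $n$ such projections; either formulation closes the argument in a single line.
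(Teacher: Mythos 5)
Your proposal is correct and is exactly the argument the paper intends: the corollary is stated without proof as an immediate consequence of Proposition~\ref{prp:ELobstruction}, using precisely the observation that a finite-dimensional $C^*$-algebra is an AF-algebra and cannot contain copies of $M_n(\C)$ for arbitrarily large $n$. You also correctly chose the ``arbitrarily large matrix subalgebras'' clause of the proposition rather than the ``no quotient isomorphic to $\C$'' clause, which would not suffice (e.g.\ for $M_2(\C)$).
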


\begin{definition}
We recall that a $C^*$-algebra $A$ is said to be \emph{Type~I}
if whenever $\pi \colon A \to \Bi (\Hi)$ is a nonzero irreducible
representation, then $\K (\Hi ) \subseteq \pi (A)$.  In the
literature, the terms \emph{postliminary}, \emph{GCR}, and
\emph{smooth} are all synonymous with Type~I.
\end{definition}

\begin{proposition} \label{prp:GAobstruction}
Let $C^*(E)$ be a graph $C^*$-algebra that is also an AF-algebra.
Then every unital quotient of $C^*(E)$ is Type~I
and has finitely many ideals.
\end{proposition}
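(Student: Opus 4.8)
The plan is to reduce to the case of a graph with finitely many vertices and no cycles, and to handle that case by exhibiting a nonzero abelian projection in every nonzero quotient.

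First I would set up the reduction. Since $C^*(E)$ is an AF-algebra, $E$ has no cycles by \cite[Theorem~2.4]{KPR}, so $E$ vacuously satisfies Condition~(K); hence every ideal of $C^*(E)$ is gauge-invariant by \cite[Corollary~3.8]{BHRS}, and every quotient of $C^*(E)$ is again a graph $C^*$-algebra by \cite[Theorem~3.6]{BHRS}. If $Q$ is a unital quotient of $C^*(E)$, write $Q \cong C^*(F)$. Being a quotient of an AF-algebra, $Q$ is AF, so $F$ has no cycles; being unital, $Q$ has an identity, which forces $F^0$ to be finite (the identity is necessarily $\sum_{v \in F^0} p_v$). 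The same reasoning applied to $C^*(F)$ shows, moreover, that every nonzero quotient of $C^*(F)$ is isomorphic to $C^*(F')$ for a graph $F'$ with no cycles and with finite vertex set --- indeed the vertex set of a quotient graph $F/(H,B)$ is $(F^0\setminus H)$ together with one new vertex for each breaking vertex of $H$ not in $B$, and both $F^0\setminus H$ and the set of breaking vertices of $H$ are finite because $F^0$ is. Thus it remains to prove, for an arbitrary graph $F'$ with finitely many vertices and no cycles, that $C^*(F')$ (a) has only finitely many ideals and (b) contains a nonzero abelian projection.

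For (a): since $F'$ has no cycles, every ideal of $C^*(F')$ is gauge-invariant, and by \cite[Theorem~3.6]{BHRS} the gauge-invariant ideals are parametrized by the pairs $(H,B)$ where $H$ is a saturated hereditary subset of $(F')^0$ and $B$ is a set of breaking vertices of $H$; as $(F')^0$ is finite there are finitely many such pairs. For (b): a nonempty finite graph with no cycles has a sink $v_0$, since a path can neither repeat a vertex nor continue forever. Because $v_0$ is a sink, the only path with source $v_0$ is $v_0$ itself, so $p_{v_0} s_\mu s_\nu^* p_{v_0} \ne 0$ only when $\mu = \nu = v_0$; consequently $p_{v_0} C^*(F') p_{v_0} = \C p_{v_0}$, so that $p_{v_0}$ is a nonzero abelian projection in $C^*(F')$.

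Finally I would assemble the conclusion. By (a), every unital quotient of $C^*(E)$ has finitely many ideals. For Type~I-ness, let $Q$ be a unital quotient of $C^*(E)$; by the reduction every nonzero quotient of $Q$ is isomorphic to $C^*(F')$ for some finite graph $F'$ with no cycles, which by (b) contains a nonzero abelian projection $p$ and hence a nonzero ideal isomorphic to an algebra $\mathcal{K}(\mathcal{H})$ of compact operators (the ideal generated by $p$ is Morita equivalent to $p\,C^*(F')\,p \cong \C$). Since $Q$ is separable and every nonzero quotient of $Q$ has a nonzero such ideal, the standard characterization of Type~I (postliminal) $C^*$-algebras shows that $Q$ is Type~I. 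The step I expect to demand the most care is the bookkeeping in the reduction --- verifying that forming a quotient, with the breaking-vertex modification of the graph this entails, preserves both finiteness of the vertex set and the absence of cycles --- rather than the elementary identity $p_{v_0} C^*(F') p_{v_0} = \C p_{v_0}$ or the invocations of \cite{BHRS}.
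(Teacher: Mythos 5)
Your proof is correct and follows essentially the same route as the paper's: reduce to the case of a unital graph AF-algebra, i.e.\ a finite graph with no cycles, count ideals via the gauge-invariant parametrization by pairs $(H,B)$, and obtain Type~I-ness from the elementary ideals generated by sink projections. The only divergence is in the final assembly, where the paper runs a finite induction on the (finite) number of ideals using closure of Type~I under extensions, whereas you invoke the standard criterion that a $C^*$-algebra is postliminal whenever every nonzero quotient contains a nonzero liminal ideal; both are valid ways to finish from the same key observation.
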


\begin{proof}
By Lemma~\ref{ideal_quotient},
it suffices to show that if a graph $C^*$-algebra
$C^*(E)$ is a unital AF-algebra
then $C^*(E)$ is Type~I and has finitely many ideals.
Note that $C^*(E)$ is a unital AF-algebra
if and only if $E$ has a finite number of vertices
and no cycles.

We first show that $C^*(E)$ has finitely many ideals.
Since $E$ has no cycles, it satisfies Condition~(K).
Hence any ideal of $C^*(E)$ is of the form $I_{(H,S)}$
for a saturated hereditary subset $H$ of $E^0$
and a subset $S \subseteq E^0$ of the set of
breaking vertices for $H$ \cite[Theorem~3.5]{DT}.
Since the set $E^0$ of vertices of $E$ is finite,
there are only a finite number of such pairs $(H,S)$.
Thus $C^*(E)$ has finitely many ideals.

To prove that $C^*(E)$ is of Type~I, first observe that any
graph with finitely many vertices and no cycles contains a sink
$v$, and the ideal $I_v$ generated by $p_v$ is then a
nontrivial gauge-invariant ideal which is Morita equivalent to
$\C$ and hence of Type~I (see \cite[Proposition~2]{aHRW} and
the subsequent remark in \cite{aHRW}).

We shall show by induction on the number of nonzero ideals of
$C^*(E)$ that $C^*(E)$ is Type~I. Our basis case is when has
just one nontrivial ideal $I$. That is, $C^*(E)$ is simple, and
then the Type~I ideal $I_v$ of the preceding paragraph is
$C^*(E)$ itself, proving the result. Now suppose as an
inductive hypothesis that the result holds whenever $C^*(E)$
has at most $n$ distinct nonzero ideals, and suppose that
$C^*(E)$ has $n+1$ such. Let $v$ be a sink in $E$ and let $I_v$
be the corresponding nonzero Type~I ideal as in the preceding
paragraph. If $C^*(E)/I_v$ is trivial, then $C^*(E) = I_v$ is
of Type~I, so we may assume that $C^*(E)/I_v$ is nonzero. Then
Lemma~\ref{ideal_quotient} implies that $C^*(E)/I_v$ is a
unital AF-algebra that is a graph $C^*$-algebra. Moreover,
$C^*(E)/I_v$ has strictly fewer ideals than $C^*(E)$, so the
inductive hypothesis implies that $C^*(E)/I_v$ is of Type~I.
Since an extension of a Type~I $C^*$-algebra by a Type~I
$C^*$-algebra is Type~I (see \cite[Theorem~5.6.2]{Mur}), it
follows that $C^*(E)$ is of Type~I.
\end{proof}

\begin{theorem} \label{simple-AF-graph-EL}
For a simple AF-algebra $A$ we have the following.
\begin{enumerate}
\item If $A$ is finite dimensional
then $A$ is isomorphic to a graph $C^*$-algebra but not isomorphic to an Exel-Laca algebra.
\item If $A$ is infinite dimensional and unital
then $A$ is isomorphic to an Exel-Laca algebra but not isomorphic to a graph $C^*$-algebra.
\item If $A$ is infinite dimensional and nonunital then $A$
   is isomorphic to a $C^*$-algebra of a row-finite graph with no sinks (which is also isomorphic to the Exel-Laca algebra of a row-finite matrix by Lemma~\ref{row-finite-graphs-matrices-lem}).
\end{enumerate}
In particular, each simple AF-algebra $A$ is isomorphic to
either an Exel-Laca algebra or a graph $C^*$-algebra.
\end{theorem}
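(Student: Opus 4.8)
The plan is to split into the three stated cases and, in each, reduce to results already in hand. The one elementary fact used throughout is that a simple $C^*$-algebra has only the trivial quotients $0$ and itself; hence a simple \emph{infinite-dimensional} AF-algebra has no nonzero finite-dimensional quotient, and a simple \emph{nonunital} AF-algebra has no unital quotient. These two observations are exactly what is needed to feed the hypotheses of the realization results proved above.

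\emph{Case (1).} If $A$ is finite dimensional, then Lemma~\ref{lem:fin-dim-graph-alg} immediately gives that $A$ is isomorphic to the $C^*$-algebra of a finite directed graph with no cycles, hence to a graph $C^*$-algebra. That $A$ is not isomorphic to an Exel-Laca algebra is precisely Corollary~\ref{No-fin-dim-E-L-Cor}. So (1) costs nothing beyond quoting these two facts.

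\emph{Cases (2) and (3), the positive half.} When $A$ is simple and infinite dimensional, the observation above says $A$ has no nonzero finite-dimensional quotient, so Corollary~\ref{no-nonzero-f-d-quotient-then-EL} applies and yields that $A$ is isomorphic to an Exel-Laca algebra; this covers the Exel-Laca assertions in both (2) and (3). For (3), since $A$ is moreover nonunital it has no unital quotient, so Theorem~\ref{no-unital-quotient-then-graph-alg} gives that $A$ is isomorphic to the $C^*$-algebra of a row-finite graph with no sinks, and Lemma~\ref{row-finite-graphs-matrices-lem}(1) then additionally realizes it as the Exel-Laca algebra of a row-finite matrix.

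\emph{Case (2), the negative half, and the summary.} Suppose $A$ is simple, infinite dimensional, and unital, and suppose toward a contradiction that $A \cong C^*(E)$ for a graph $E$. Since $A$ is an AF-algebra, Proposition~\ref{prp:GAobstruction} applies, and because $A$ is unital, $A$ is (via the zero ideal) a unital quotient of $C^*(E)$, so $A$ must be Type~I. But a simple Type~I $C^*$-algebra is elementary, i.e. isomorphic to $\K(H)$ for some Hilbert space $H$, and $\K(H)$ is unital only when $H$ is finite dimensional, contradicting that $A$ is infinite dimensional; hence $A$ is not a graph $C^*$-algebra. Finally, the ``in particular'' statement follows by assembling the cases: by (1) every finite-dimensional simple AF-algebra is a graph $C^*$-algebra, and by (2) and (3) every infinite-dimensional simple AF-algebra (unital or not) is an Exel-Laca algebra. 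The main point to be careful about is not a deep estimate but the bookkeeping: correctly using simplicity to kill the relevant quotients, and invoking the structure theorem for simple Type~I $C^*$-algebras as the only genuinely external input; essentially all the real work has been front-loaded into the lemmas and corollaries above.
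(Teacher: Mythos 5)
Your proposal is correct and follows essentially the same route as the paper: case (1) via Lemma~\ref{lem:fin-dim-graph-alg} and Corollary~\ref{No-fin-dim-E-L-Cor}, the positive halves of (2) and (3) by using simplicity to rule out finite-dimensional (resp.\ unital) quotients and then invoking Corollary~\ref{no-nonzero-f-d-quotient-then-EL} and Theorem~\ref{no-unital-quotient-then-graph-alg}, and the negative half of (2) via Proposition~\ref{prp:GAobstruction} together with the fact that a simple Type~I $C^*$-algebra is $\K(\Hi)$. No gaps.
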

\begin{proof}

The statement in (1) follows from Lemma~\ref{lem:fin-dim-graph-alg} and Corollary~\ref{No-fin-dim-E-L-Cor}.

For (2) we observe that if $A$ is simple, infinite dimensional,
and unital, then it follows from
Corollary~\ref{no-nonzero-f-d-quotient-then-EL} that $A$ is
isomorphic to an Exel-Laca algebra. Since $A$ is in particular
unital, to see that $A$ is not a graph $C^*$-algebra, it suffices by
Proposition~\ref{prp:GAobstruction} to show that it is not of
Type~I. If we suppose for contradiction that $A$ is of Type~I,
then as it is simple, we must have $A \cong \K(\Hi)$ for some
Hilbert space $\Hi$. Since $A$ is unital, $\Hi$ and hence
$\K(\Hi)$ must be finite-dimensional, contradicting that $A$ is
infinite dimensional.

The statement in (3) follows from
Theorem~\ref{no-unital-quotient-then-graph-alg}. The final
assertion follows from (1), (2), and (3).
\end{proof}

\begin{corollary} \label{UHF-inf-not-graph}
If $A$ is an infinite-dimensional UHF algebra, then $A$ is not
isomorphic to a graph $C^*$-algebra.
\end{corollary}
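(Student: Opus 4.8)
The plan is to observe that an infinite-dimensional UHF algebra satisfies exactly the hypotheses of Theorem~\ref{simple-AF-graph-EL}(2), so that the conclusion is immediate once those hypotheses are checked. Recall that a UHF algebra is by construction the direct limit of a sequence of full matrix algebras $M_{n_1}(\C) \to M_{n_2}(\C) \to \cdots$ with \emph{unital} connecting homomorphisms; in particular it is unital and, being a direct limit of finite-dimensional $C^*$-algebras, it is an AF-algebra. It is also simple: any nonzero ideal meets some $M_{n_k}(\C)$ in a nonzero ideal, hence contains $1_{M_{n_k}(\C)}$, and since the connecting maps are unital this element is $1_A$, so the ideal is all of $A$. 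Finally, if $A$ is infinite dimensional then the sequence $(n_k)$ must be unbounded, so $A$ is a simple, unital, infinite-dimensional AF-algebra.

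With these observations in hand, Theorem~\ref{simple-AF-graph-EL}(2) applies directly and yields that $A$ is not isomorphic to a graph $C^*$-algebra. One could equally well argue by hand, unwinding the relevant part of that theorem: if $A \cong C^*(E)$ for some graph $E$, then since $A$ is a unital AF-algebra, Proposition~\ref{prp:GAobstruction} forces $A$, being a (the trivial) unital quotient of itself, to be Type~I; but a simple unital Type~I $C^*$-algebra is isomorphic to $M_n(\C)$ for some finite $n$, contradicting that $A$ is infinite dimensional.

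There is essentially no substantive obstacle here: the entire content is the verification that a UHF algebra is a simple, unital, infinite-dimensional AF-algebra, all of which is standard. The one point to be mildly careful about is that we are entitled to call $A$ unital (this uses that the defining connecting maps of a UHF algebra are unital), so that it qualifies as its own unital quotient in Proposition~\ref{prp:GAobstruction}, equivalently as an instance of case~(2) rather than case~(3) of Theorem~\ref{simple-AF-graph-EL}.
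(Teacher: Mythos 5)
Your proof is correct and matches the paper's approach: the corollary is stated as an immediate consequence of Theorem~\ref{simple-AF-graph-EL}(2), and your verification that an infinite-dimensional UHF algebra is a simple, unital, infinite-dimensional AF-algebra is exactly the needed content. Your optional unwinding via Proposition~\ref{prp:GAobstruction} also faithfully reproduces the argument inside the proof of that theorem.
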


\section{A summary of known containments} \label{Venn-sec}

In this section we use our results to describe how various classes of AF-algebras are contained in the classes of graph $C^*$-algebras, Exel-Laca algebras, and ultragraph algebras.  We first examine the simple AF-algebras, where we have a complete description.  Moreover, we see that the simple AF-algebras allow us to distinguish among the four classes of $C^*$-algebras of row-finite graphs with no sinks, graph $C^*$-algebras, Exel-Laca algebras, and ultragraph algebras.  Second, we consider general AF-algebras, and while our description in this case is not complete, we are able to describe how the finite-dimensional and stable AF-algebras are contained in the classes of graph $C^*$-algebras, Exel-Laca algebras, and ultragraph algebras.  Furthermore, we use our results to show that there are numerous other AF-algebras in the various intersections of these classes.

\subsection{Simple AF-algebras}

Consider the following partition of the simple AF-algebras.
\begin{align*}
\textrm{AF}^\text{simple}_{\textit{finite}} &:= \text{finite-dimensional simple AF-algebras} \\
\textrm{AF}^\text{simple}_{\infty, \textit{unital}} &:= \text{infinite-dimensional simple AF-algebras that are unital} \\
\textrm{AF}^\text{simple}_{\infty, \textit{nonunital}} &:= \text{infinite-dimensional simple AF-algebras that are nonunital}
\end{align*}
Theorem~\ref{simple-AF-graph-EL} and
Theorem~\ref{no-unital-quotient-then-graph-alg} imply that
\begin{align*}
\textrm{AF}^\text{simple}_{\infty, \textit{nonunital}} &= \text{simple AF-algebras that are $C^*$-algebras of} \\
& \qquad \quad \text{row-finite graphs with no sinks,} \\
\textrm{AF}^\text{simple}_{\textit{finite}} \cup \textrm{AF}^\text{simple}_{\infty, \textit{nonunital}} &= \text{simple AF-algebras that are graph $C^*$-algebras,} \\
\textrm{AF}^\text{simple}_{\infty, \textit{unital}} \cup \textrm{AF}^\text{simple}_{\infty, \textit{nonunital}}  &= \text{simple AF-algebras that are Exel-Laca algebras} \\
\intertext{\noindent and}
\textrm{AF}^\text{simple}_{\textit{finite}} \cup \textrm{AF}^\text{simple}_{\infty, \textit{unital}} \cup \textrm{AF}^\text{simple}_{\infty, \textit{nonunital}}  &= \text{simple AF-algebras that are ultragraph algebras.}
\end{align*}
Hence these three classes of simple AF-algebras allow us to
distinguish among the four classes of $C^*$-algebras of
row-finite graphs with no sinks, graph $C^*$-algebras,
Exel-Laca algebras, and ultragraph algebras.  However, they do
not allow us to distinguish between the classes of
$C^*$-algebras of row-finite graphs with no sinks and the
intersection of graph $C^*$-algebras and Exel-Laca algebras.
Nor do they allow us to distinguish between the classes of
ultragraph $C^*$-algebras and the union of graph $C^*$-algebras
and Exel-Laca algebras.  To distinguish these classes we will need nonsimple examples.

\medskip

\subsection{More general AF-algebras}

For nonsimple AF-algebras, we cannot give such an explicit description. Nevertheless, in Figure~\ref{fig:Venn}
we present a Venn diagram summarizing the relationships we have
established for finite-dimensional and stable AF-algebras, and also give various examples in the intersections of our classes of graph $C^*$-algebras, Exel-Laca algebras, and
ultragraph $C^*$-algebras.

\begin{figure}[htp!]
\[\begin{tikzpicture}[>=latex]]
  \draw[style=thick,fill=white,opacity=1] (2,0) circle (4);
  \node[anchor=north east] at (5.25,2.8) {\begin{tabular}{c}\textsc{Exel-Laca}\\ \textsc{AF-algebras}\end{tabular}};
  \draw[style=semithick] (-8,-6) rectangle (8,6);
  \node[anchor=north west] at (-7.3,5.8) {\textsc{AF-algebras}};
  \draw[style=thick] (-2,0) circle (4);
  \node[anchor=north west] at (-5.25,2.8) {\begin{tabular}{c}\textsc{Graph}\\ \textsc{AF-algebras}\end{tabular}};
  \draw[style=thick, smooth cycle] plot[tension=0.7] coordinates{(-5.75,-4) (-5.75,4.1) (5.75,4.1) (5.75,-4)};
  \node at (0,4.75) {\textsc{Ultragraph AF-algebras}};
  \draw[style=thick, smooth cycle] plot[tension=0.7] coordinates{(-1,-1.75) (-1,-0.25) (1,-0.25) (1,-1.75)};
  \node[anchor=north west, inner sep=3pt] at (-1,-0.25) {\begin{tabular}{c} \text{stable} \\ \text{AF-algs} \end{tabular}};
  \draw[style=thick, smooth cycle] plot[tension=0.7] coordinates{(-1.4,-1.5) (-1.4,1.5) (0,2.5) (1.4,1.5) (1.4,-1.5) (0,-2.5)};
  \node at (0,1.25) {\begin{tabular}{c} \text{AF-algs} \\ \text{of row-finite} \\ \text{graphs with} \\ \text{no sinks} \end{tabular}};
  \draw[style=thick, smooth cycle] plot[tension=0.7] coordinates{(-5,-1.5) (-5,0.5) (-3,0.5) (-3,-1.5)};
  \node[anchor=north west, inner sep=1pt] at (-5.25,0.5) {\begin{tabular}{c} \text{finite} \\ \text{dimensional} \\ \text{$C^*$-algs} \end{tabular}};
  \draw (-2.8,5.5) circle (0.5em) node {\small a};
  \draw (-2.8,3.25) circle (0.5em) node {\small b};
  \draw (0,4.15) circle (0.5em) node {\small c};
  \draw (0,2.85) circle (0.5em) node {\small d};
  \draw (1.15,0.2) circle (0.5em) node {\small e};
  \draw (2.2,3.25) circle (0.5em) node {\small f};
\end{tikzpicture}\]
\caption{A Venn diagram summarizing AF-algebra containments}\label{fig:Venn}
\end{figure}
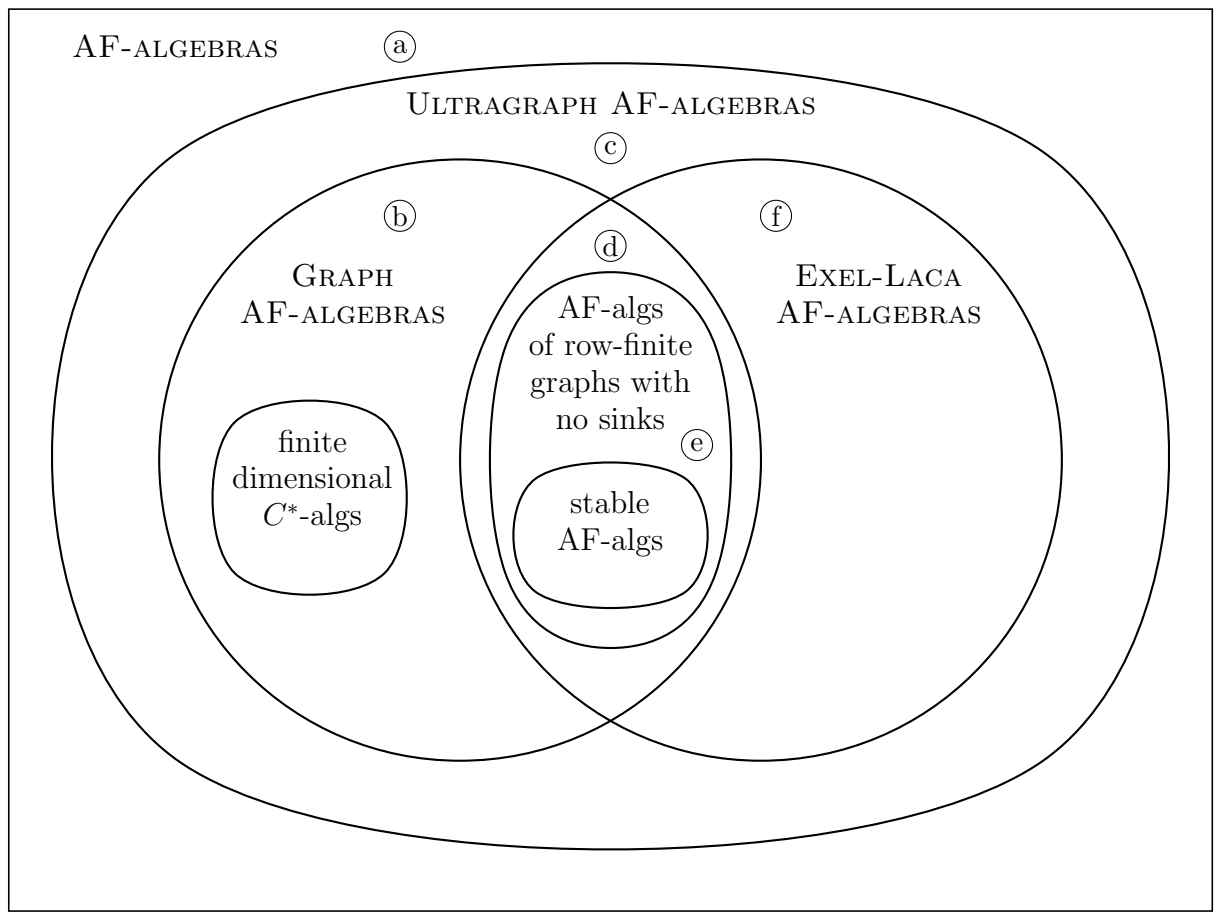

\begin{table}[htp!]
\[\begin{tabular}{||c|c|c||}\hline
\raisebox{0pt}[1em][0em]{Region} & unital $C^*$-algebra& nonunital $C^*$-algebra\\[0.25ex]
\hline
\raisebox{0pt}[1em][0em]{(a)} & $c_c$ & $c_0 \oplus c_c$ \\[0.25ex]
\raisebox{0pt}[1em][0em]{(b)} & $\K^+$ & $c_0$ \\[0.25ex]
\raisebox{0pt}[1em][0em]{(c)} & $M_{2^\infty} \oplus \C$ & $M_{2^\infty} \oplus \C \oplus \K$ \\[0.25ex]
\raisebox{0pt}[1em][0em]{(d)} & $M_2(\MU{\K})$ & $M_2(\MU{\K}) \oplus \K$ \\[0.25ex]
\raisebox{0pt}[1em][0em]{(e)} & --- & $C^*(F_2)$ \\[0.25ex]
\raisebox{0pt}[1em][0em]{(f)} & $M_{2^\infty}$ & $M_{2^\infty} \oplus \K$ \\[0.25ex]\hline
\end{tabular}\]
$ $ \caption{Examples of $C^*$-algebras lying in each region of
Figure~\ref{fig:Venn}}\label{tab:egs-for-diagram}
\end{table}

Table~\ref{tab:egs-for-diagram} presents, for each region of
the Venn diagram of Figure~\ref{fig:Venn}, both a unital and a
nonunital example belonging to that region, with three
exceptions: we give no examples of finite-dimensional or stable
AF algebras, nor any example of a unital AF algebra which is
the $C^*$-algebra of a row-finite graph with no sinks. Our
reasons for these omissions are as follows: examples of
finite-dimensional and stable AF algebras are obvious, and
necessarily unital and nonunital respectively; and no unital
example exists in region (e) by
Theorem~\ref{no-unital-quotient-then-graph-alg}.

In Table~\ref{tab:egs-for-diagram}, we use the following
notation:
\begin{itemize}
\item $M_{2^\infty}$ denotes the UHF algebra of type
   $2^\infty$.
\item $\K$ denotes the compact operators on a separable
   infinite-dimensional Hilbert space.
\item $\MU{\K}$ denotes the minimal unitization of the
   $C^*$-algebra $\K$.
\item $c_0$ denotes the space $\{f : \N \to \C \ | \
   \lim_{n \to \infty} f(n) = 0\}$.
\item $c_c$ denotes the space $\{f : \N \to \C \ | \
   \lim_{n \to \infty} f(n) \in \C \}$.
\item $F_2$ denotes the graph\quad
   \raisebox{-0.5ex}[2ex][0.5ex]{\begin{tikzpicture}[xscale=1.5]
  \node[inner sep=1pt] (a) at (0,0) {\small $v_1$};%
  \node[inner sep=1pt] (b) at (1,0) {\small $v_2$};%
  \node[inner sep=1pt] (c) at (2,0) {\small $v_3$};%
  \node[inner sep=1pt] (d) at (3,0) {\small $v_4$};%
  \node at (3.35,0) {\dots};%
  \draw[-latex] (a) .. controls (0.5,0.15) .. (b);%
  \draw[-latex] (a) .. controls (0.5,-0.15) .. (b);%
  \draw[-latex] (b) .. controls (1.5,0.15) .. (c);%
  \draw[-latex] (b) .. controls (1.5,-0.15) .. (c);%
  \draw[-latex] (c) .. controls (2.5,0.15) .. (d);%
  \draw[-latex] (c) .. controls (2.5,-0.15) .. (d);%
  \end{tikzpicture}}.
\end{itemize}

\noindent We now justify that the examples listed have the
desired properties.

\begin{enumerate}\renewcommand{\theenumi}{\alph{enumi}}
\item
   \begin{itemize}
   \item The unital AF-algebra $c_c$ is not an
       ultragraph $C^*$-algebra since it is
       commutative and its spectrum is not discrete
       (see Proposition~\ref{prop:commutatuveUGA}).
   \item The nonunital AF-algebra $c_0\oplus c_c$ is
       not an ultragraph algebra for precisely the
       same reason that $c_c$ is not.
   \end{itemize}
\item
   \begin{itemize}
   \item The minimal unitization $\MU{\K}$ of the
       compact operators is isomorphic to the
       $C^*$-algebra of the graph\
       \raisebox{-3pt}[2.5ex][1.5ex]{\begin{tikzpicture}[scale=2]
       \node[circle,inner sep=1pt] (v) at (0,0) {\small$v$};%
       \node[circle,inner sep=1pt] (w) at (1,0) {\small$w$};%
       \draw[-latex] (v.20) -- (w.163) node[pos=0.5,anchor=south,inner sep=1.5pt] {\tiny$(\infty)$};%
       \draw[-latex] (v.340) -- (w.197);
       \end{tikzpicture}}\  with two vertices
       $v,w$ and infinitely many edges from $v$ to
       $w$. Since, $\MU{\K}$ has a quotient isomorphic
       to $\C$, it is not an Exel-Laca algebra by
       Proposition~\ref{prp:ELobstruction}.
   \item The nonunital AF-algebra $c_0$ is the
       $C^*$-algebra of the graph with infinitely many
       vertices and no edges. It is not an Exel-Laca
       algebra by Proposition~\ref{prp:ELobstruction}.
   \end{itemize}
\item
   \begin{itemize}
   \item Since $M_{2^\infty}$ is an
       infinite-dimensional simple AF-algebra,
       Theorem~\ref{simple-AF-graph-EL} implies that
       $M_{2^\infty}$ is an Exel-Laca algebra and
       hence also an ultragraph algebra. In addition,
       $\C$ is a graph $C^*$-algebra so also an
       ultragraph $C^*$-algebra. Since the class of
       ultragraph $C^*$-algebras is closed under
       direct sums, $M_{2^\infty} \oplus \C$ is a
       unital ultragraph $C^*$-algebra. It is not an
       Exel-Laca algebra because it has a quotient
       isomorphic to $\C$ (see
       Proposition~\ref{prp:ELobstruction}), and it is
       not a graph $C^*$-algebra because it has a
       unital quotient $M_{2^\infty}$ that is not
       Type~I (see
       Proposition~\ref{prp:GAobstruction}).
   \item Since $\K$ and $M_{2^\infty} \oplus \C$ are
       both ultragraph $C^*$-algebras, the direct sum
       $M_{2^\infty} \oplus \C \oplus \K$ is a
       nonunital ultragraph $C^*$-algebra. It is
       neither a graph $C^*$-algebra nor an Exel-Laca
       algebra as above.
   \end{itemize}
\item
   \begin{itemize}
   \item The unital AF-algebra $M_2(\MU{\K})$ is
       isomorphic to the $C^*$-algebra of the
       following graph
      \[\begin{tikzpicture}[scale=2]
          \node[circle,inner sep=-0.5pt] (v) at (0,0) {$\bullet$};%
          \node[circle,inner sep=-0.5pt] (w) at (1,0) {$\bullet$};%
          \draw[-latex] (v.north east) -- (w.north west) node[pos=0.5,anchor=south,inner sep=1.5pt] {\small$(\infty)$};%
          \draw[-latex] (v.south east) -- (w.south west);
          \node[circle,inner sep=-0.5pt] (x) at (1,-0.5) {$\bullet$};%
          \draw[-latex] (x)--(v.-60);
          \draw[-latex] (x)--(w);
      \end{tikzpicture}\]
   and it is also isomorphic to the Exel-Laca algebra of
   the matrix
   \[\left(\begin{tabular}{cccccccc}
       0&1&1&1&1&$\cdots$ \\
       0&0&1&0&0& \\
       0&0&0&1&0& \\
       0&0&0&0&1& \\
       \vdots&&&&&$\ddots$
   \end{tabular}\right).\]
   It is not isomorphic to the $C^*$-algebra of a
   row-finite graph with no sinks by
   Theorem~\ref{no-unital-quotient-then-graph-alg}.
   \item The nonunital AF-algebra $M_2(\MU{\K}) \oplus
       \K$ is isomorphic to both a graph $C^*$-algebra and
       an Exel-Laca algebra because its two direct
       summands have this property. It is not the
       $C^*$-algebra of a row-finite graph with no
       sinks by
       Theorem~\ref{no-unital-quotient-then-graph-alg}
       because it admits the unital quotient
       $M_2(\MU{\K})$.
   \end{itemize}
\item
   \begin{itemize}
   \item There is no unital example in this region by
       Theorem~\ref{no-unital-quotient-then-graph-alg}.
   \item Let $F_2$ denote the graph\quad
   \raisebox{-0.5ex}[2ex][0.5ex]{\begin{tikzpicture}[xscale=1.5]
  \node[inner sep=1pt] (a) at (0,0) {\small $v_1$};%
  \node[inner sep=1pt] (b) at (1,0) {\small $v_2$};%
  \node[inner sep=1pt] (c) at (2,0) {\small $v_3$};%
  \node[inner sep=1pt] (d) at (3,0) {\small $v_4$};%
  \node at (3.35,0) {\dots};%
  \draw[-latex] (a) .. controls (0.5,0.15) .. (b);%
  \draw[-latex] (a) .. controls (0.5,-0.15) .. (b);%
  \draw[-latex] (b) .. controls (1.5,0.15) .. (c);%
  \draw[-latex] (b) .. controls (1.5,-0.15) .. (c);%
  \draw[-latex] (c) .. controls (2.5,0.15) .. (d);%
  \draw[-latex] (c) .. controls (2.5,-0.15) .. (d);%
  \end{tikzpicture}}.  Then $C^*(F_2)$ is a graph $C^*$-algebra, and
       since $F_2$ is cofinal with no cycles and no
       sinks, $C^*(F_2)$ is simple by
       \cite[Corollary~3.10]{KPR}. In addition,
       $C^*(F_2)$ is nonunital because $F_2$ has
       infinitely many vertices. Since $C^*(F_2)$ is
       the $C^*$-algebra of a row-finite graph with no
       sinks, it is both a graph $C^*$-algebra and an
       Exel-Laca algebra (see
       Lemma~\ref{row-finite-graphs-matrices-lem}).
       The function $g : F_2^0 \to \R^+$ defined by
       $g(v_i) = 2^{-i}$ is a graph trace with norm 1
       (see \cite[Definition~2.2]{Tomforde2004}), and
       the existence of such a function implies that
       $C^*(F_2)$ is not stable by
       (a)${}\implies{}$(c) of
       \cite[Theorem~3.2]{Tomforde2004}.
   \end{itemize}
\item
   \begin{itemize}
   \item As in example~(c), the unital AF-algebra
       $M_{2^\infty}$ is an Exel-Laca algebra but not
       a graph $C^*$-algebra.
   \item As in example~(c), the nonunital AF-algebra
       $M_{2^\infty} \oplus \K$ is an Exel-Laca
       algebra but not a graph $C^*$-algebra.
   \end{itemize}
\end{enumerate}

\end{document}